\newtheorem{theorem}{Theorem}[section]
\newtheorem{lemma}[theorem]{Lemma}
\newtheorem{corollary}[theorem]{Corollary}
\theoremstyle{definition}
\newtheorem{remark}[theorem]{Remark}
\newtheorem{problem}[theorem]{Problem}
\numberwithin{equation}{section}
\numberwithin{table}{section}
\numberwithin{figure}{section}
\title{Some p-robust a posteriori error estimates based on auxiliary spaces}
\author{Yuwen Li}
\address{School of Mathematical Sciences, Zhejiang University, 866 Yuhangtang Road, Hangzhou, Zhejiang 310058, People's Republic of China.}
\email{liyuwen@zju.edu.cn}
\thanks{
Y. Li was supported  by the National Key R\&D Program of China under grant
2024YFA1012600 and the National Natural Science Foundation of China under grant 12471346.}
\begin{document}

\begin{abstract}
This work develops polynomial-degree-robust (p-robust) equilibrated a posteriori error estimates for $H(\rm curl)$, $H(\rm div)$ and $H(\rm divdiv)$ problems, based on $H^1$ auxiliary space decomposition. The proposed framework employs auxiliary space preconditioning and regular decompositions to decompose the finite element residual into $H^{-1}$ residuals that are further controlled by classical p-robust equilibrated a posteriori error analysis. As a result, we obtain novel and simple p-robust a posteriori error estimates of $H(\rm curl)$/$H(\rm div)$ conforming methods and mixed methods for the biharmonic equation. In addition, we prove guaranteed a posteriori upper error bounds under convex domains or certain boundary conditions. Numerical experiments demonstrate  the effectiveness and p-robustness of the proposed error estimators for the N\'ed\'elec edge element methods and the Hellan--Herrmann--Johnson methods.

% \PACS{PACS code1 \and PACS code2 \and more}
%\subclass{MSC 65N30 \and MSC 65N12}
\end{abstract}

\maketitle

\markboth{Y.~Li}{p-Robust A Posteriori Error Estimates by Auxiliary Spaces}

\section{Introduction}
Adaptive mesh refinement is a common routine in well-designed numerical methods for resolving singularities, sharp gradients, complex geometry as well as saving computational cost in large-scale numerical simulations. In adaptive Finite Element Methods (FEMs), a posteriori error estimation is the key tool for equi-distributing errors and constructing nearly optimal locally refined meshes. Available a posteriori error estimates of FEMs include explicit residual, superconvergent recovery, dual weighted residual, equilibrated residual, hierarchical basis, functional and solver-based a posteriori error estimators, see, e.g., \cite{Bank1996,AinsworthOden2000,BeckerRannacher2001,BankXu2003a,CarstensenBartels2002,Ainsworth2007,Repin2008,HannukainenStenbergVohralik2012,Verfurth2013,ErnVohralik2015,ErnVohralik2020,Li2018SINUM,BankLi2019,LiShui2025arxiv} for an incomplete list of references in this research area.

This work is concerned with a posteriori error estimation based on equilibrated fluxes by solving local vertex-oriented problems, see \cite{AinsworthOden2000,BraessSchoberl2008,ErnVohralik2015} and references therein.   It has been shown in \cite{BraessPillweinSchoberl2009,ErnVohralik2015,ErnVohralik2020,Chaumont2023} that the efficiency of equilibrated residual error estimators is robust with respect to the polynomial degree (referred to as $p$-\emph{robust}) for classical model problems posed in $H^1$ and $H(\rm curl)$ spaces. The $p$-robustness is an important property in $hp$-AFEMs and real-world simulations by very high order FEMs (cf.~\cite{Schwab1998,Demkowicz2007}). Moreover, equilibrated  residual error analysis often produces \emph{guaranteed} upper error bounds free of unknown hidden constants, leading to fully reliable FEMs under such a posteriori error control.  

Generally speaking, equilibration for vector-valued Sobolev spaces on the de Rham complex is more complicated than the scalar-valued $H^1$ space, see~\cite{BraessSchoberl2008,ChaumontErnVohralik2021}. A great effort (cf.~\cite{Chaumont2023,Chaumont2025}) has been devoted to simplifying equilibration process for $H(\rm curl)$ problems. Recently, differential complexes involving higher order derivatives such as elasticity and divdiv complexes are under intensive investigation, see, e.g.,  \cite{ArnoldAwanouWinther2008,ArnoldHu2021,HuLiangMa2022,ChenHuang2022SINUM,ChenHuang2022MCOM,HuLinZhang2025}. Equilibrated residual error estimators for FEMs posed on those complexes (cf.~\cite{Comodi1989,PechsteinSchoberl2011,GopalakrishnanLedererSchoberl2020}) are expected to be even more difficult and complicated. The ad hoc construction of such a posteriori error estimates renders the theoretical analysis and numerical implementation cumbersome. 

With the help of regular decomposition, we shall develop equilibrated a posteriori error estimates of conforming FEMs posed on the de Rham complex. 
The proposed framework makes use of the regular decomposition to reduce finite element residuals in $H(\rm curl)^*$, $H(\rm div)^*$ into $H^{-1}$ residuals with stability constants controlled by the fictitious space lemma from preconditioning theory. After the reduction, each $H^{-1}$ residual is readily bounded by a $H^1$-type equilibrated residual error estimator. In what follows, we obtain novel equilibrated a posteriori error estimates for the semi-definite curl-curl equation, the positive-definite $H(\rm curl)$ and $H(\rm div)$ problems as well as the indefinite Hodge--Laplace equation, see Section \ref{sect:natural}. As a by-product, numerical implementation of the aforementioned error estimators for $H(\rm curl)$ and $H(\rm div)$ problems is as simple as constructing equilibrated fluxes for Poisson's equations in $H^1$. When the domain is convex, a posteriori error upper bounds for $H(\rm curl)$ problems are guaranteed due to the explicit stability constant. As far as we know, equilibrated residual error  estimators for the semi-definite equation ${\rm curlcurl}u=f$ was extensively studied while the positive-definite one ${\rm curlcurl}u+u=f$ has not been investigated in the literature.

In addition to the de Rham complex, we shall present applications of the proposed auxiliary space approach to FEMs related to the divdiv complex. In this scenario, we employ Helmholtz-type decomposition from divdiv complexes to obtain equivalence between $L^2$ norm error in the stress variable and several $H^{-1}/H^{-2}$ residuals. The $H^{-1}$ residual is then estimated by equilibration as before while the $H^{-2}$ residual boils down to data oscillations. Consequently, we derive novel $p$-robust equilibrated a posteriori error estimates of mixed methods for Poisson's equation and the Hellan--Herrmann--Johnson (HHJ) method for the fourth order equation, see Section \ref{sect:stress}. Under essential boundary conditions, those a posteriori upper error bounds are guaranteed with explicit multiplicative constant. It is noted that \cite{BraessPechsteinSchoberl2020} derived an equilibrated residual error estimator of the $C^0$ discontinuous Galerkin method for biharmonic equations. The lower bound in \cite{BraessPechsteinSchoberl2020} was based on a comparison with explicit residual-type estimators that fail to be $p$-robust. To the best of our knowledge, Section \ref{sect:stress} provides the first provably $p$-robust a posteriori error estimates for the HHJ method.

The rest of the paper is organized as follows. In
Section~\ref{sect:prelim} we provide preliminaries
and basic tools. In Section~\ref{sect:natural} we derive equilibrated a posteriori error estimates for controlling the $H(\rm curl)$ and $H(\rm div)$ norm errors.  Section~\ref{sect:stress}
is devoted to equilibrated residual error estimators for controlling the $L^2$ norm error of stress variables in several mixed FEMs. Section~\ref{sect:numerical} numerically illustrates the effectiveness and robustness of the proposed error estimators for the N\'ed\'elec and HHJ finite element methods. Section \ref{sect:conclusion} is the concluding remark.

\section{Preliminaries}\label{sect:prelim} Let $\Omega\subset\mathbb{R}^d$ be a Lipschitz polygon or polyhedron with $d\in\{2,3\}$. Let $\mathbb{V}=\mathbb{R}^d$ and $\mathbb{S}$ be the space of $d\times d$ symmetric matrices. By $L^2(\Omega,\mathbb{V})=L^2(\Omega)\otimes\mathbb{V}$ and $L^2(\Omega,\mathbb{S})=L^2(\Omega)\otimes\mathbb{S}$ we denote the space of $\mathbb{V}$- and $\mathbb{S}$-valued $L^2$ functions, respectively. Similar notation applies to function spaces with different regularity as well as discrete spaces. By $v_i$ and $\tau_i$ we denote the $i$-th entry of a vector $v$ and the $i$-th row of a matrix $\tau$, respectively.
Given $w\in C^\infty(\Omega)$, $v\in C^\infty(\Omega,\mathbb{V})$, $\tau\in C^\infty(\Omega,\mathbb{S})$, define the following differential operators
\begin{align*}
&{\rm curl}w=\Big(\frac{\partial w}{\partial x_2},-\frac{\partial w}{\partial x_1}\Big),\quad{\rm rot}v=\frac{\partial v_2}{\partial x_1}-\frac{\partial v_1}{\partial x_2},\quad\text{ if }\Omega\subset\mathbb{R}^2,\\
&{\rm curl}v=\Big(\frac{\partial v_3}{\partial x_2}-\frac{\partial v_2}{\partial x_3},\frac{\partial v_1}{\partial x_3}-\frac{\partial v_3}{\partial x_1},\frac{\partial v_2}{\partial x_1}-\frac{\partial v_1}{\partial x_2}\Big),\quad\text{ if }\Omega\subset\mathbb{R}^3,\\
&{\rm symcurl}v=\frac{1}{2}\left\{\begin{pmatrix}
    {\rm curl}v_1\\
    {\rm curl}v_2
\end{pmatrix}+\begin{pmatrix}
    {\rm curl}v_1\\
    {\rm curl}v_2
\end{pmatrix}^\top\right\}\quad\text{ if }\Omega\subset\mathbb{R}^2,\\
&\nabla v=\begin{pmatrix}
    \nabla v_1\\ \vdots\\ \nabla v_d
\end{pmatrix},\quad{\rm div}\tau=\begin{pmatrix}
    {\rm div}\tau_1\\ \vdots\\{\rm div}\tau_d
\end{pmatrix}\quad\text{ if }\Omega\subset\mathbb{R}^d,\\
&{\rm hess}w=\nabla(\nabla w).
\end{align*}
For the de Rham complex, we shall consider function spaces 
\begin{align*}
H_0({\rm curl},\Omega)&=\{v\in L^2(\Omega,\mathbb{V}): {\rm curl}v\in L^2(\Omega,\mathbb{V}),~v\times n=0\text{ on }\partial\Omega\},\\
H_0({\rm div},\Omega)&=\{v\in L^2(\Omega,\mathbb{V}): {\rm div}v\in L^2(\Omega),~v\cdot n=0\text{ on }\partial\Omega\},
\end{align*}
equipped with norms $\|v\|_{H(\rm curl,\Omega)}=(\|v\|^2+\|{\rm curl}v\|^2)^\frac{1}{2}$, $\|v\|_{H(\rm div,\Omega)}=(\|v\|^2+\|{\rm div}v\|^2)^\frac{1}{2}$, respectively.
The spaces $H({\rm curl},\Omega)$ and $H({\rm div},\Omega)$ are obtained by removing the boundary conditions from $H_0({\rm curl},\Omega)$ and $H_0({\rm div},\Omega)$, respectively.
We also need the following algebraic operations (cf.~\cite{ArnoldHu2021,HuLinZhang2025})
 \begin{align*}
{\rm mskw}(v)&=\begin{pmatrix}
    0&v_3&-v_2\\
    -v_3&0&v_1\\
    v_2&-v_1&0
\end{pmatrix}\quad\text{ if }\Omega\subset\mathbb{R}^3,\\
{\rm mskw}(w)&=\begin{pmatrix}
    0&w\\
    -w&0
\end{pmatrix}\quad\text{ if }\Omega\subset\mathbb{R}^2,\\
\iota(w)&=wI_{d\times d}\quad\text{ for a scalar-valued }w.
\end{align*}
Given a vector field $v$ in $\mathbb{R}^2$, $v^\perp=(v_2,-v_1)$ is the rotation of $v$ by degree $\pi/2$. For a tensor field $\sigma$, let $\sigma^\perp$ denote the row-wise rotation of $\sigma$ by degree $\pi/2$. We note that 
\begin{subequations}
\begin{align}
{\rm div}({\rm mskw}(w))&={\rm curl}w,\label{eq:div_mskw}\\
\nabla w&={\rm div}\iota(w),\\
{\rm div}(v^\perp)&={\rm rot}v.\label{eq:div_perp}   
\end{align}  
\end{subequations} 

By $(\bullet,\bullet)_K$ we denote the $L^2$ inner product on a manifold $K$. Let $(\bullet,\bullet)=(\bullet,\bullet)_\Omega$ and  $\|\bullet\|=\|\bullet\|_{L^2(\Omega)}$ denote the global $L^2$ norm. 
Let $\langle\cdot,\cdot\rangle$ denote the duality pairing between a Hilbert $V$ and its dual space $V^*$. The identity operator, denoted by id, acts on a context-dependent appropriate space. Let $H^{-1}(\Omega)=H_0^1(\Omega)^*$ be the dual space of $H_0^1(\Omega)=H_0({\rm grad},\Omega)$. The auxiliary spaces under consideration include $H_0^1(\Omega)$ and
\begin{align*}
\widetilde{H}^1(\Omega)&=\big\{v\in H^1(\Omega): (v,1)=0\big\},\\ \widetilde{H}^2(\Omega)&=\big\{v\in H^2(\Omega): v=0\text{ on }\partial\Omega\big\}.     
\end{align*}
The spaces $H_0^1(\Omega)$ and $\widetilde{H}^1(\Omega)$ are equipped with the inner product $\langle-\Delta\bullet,\bullet\rangle=(\nabla\bullet,\nabla\bullet)$ while $\widetilde{H}^2(\Omega)$ admits the inner product $({\rm hess}\bullet,{\rm hess}\bullet)$. 
 
Let $\mathcal{P}_k$ denote the space of polynomials of degree $\leq k$ in $d$ variables and $\mathcal{P}_k(M)$ be the space of polynomials of degree $\leq k$ on a flat manifold $M$.
Given a conforming triangulation $\mathcal{T}_h$ of $\Omega$, let 
\[
V_h=\big\{v_h\in H^1_0(\Omega): v_h|_T\in\mathcal{P}_1~\forall T\in\mathcal{T}_h\big\}
\]
and $V_h(\mathbb{V})=V_h\otimes\mathbb{V}$ be the vector-valued nodal element space. Let $\{a_i\}_{i\in\mathcal{I}}$ be the set of grid vertices in $\mathcal{T}_h$, $\phi_i$ be the hat function associated to $a_i$, and $\Omega_i={\rm supp}(\phi_i)$ the vertex patch surrounding $a_i$. Let $\mathcal{E}_h$ be the collection of grid faces not contained in $\partial\Omega$. Let $h_T={\rm diam}(T)$ be the diameter of $T\in\mathcal{T}_h$. Let $n$ be the piecewise unit vector field on $\mathcal{E}_h$ with $n_E$ being the unit normal to $E\in\mathcal{E}_h$. In $\mathbb{R}^2$, $t:=-n^\perp$ is the piecewise unit tangent vector field on $\mathcal{E}_h$. We adopt the notation $C_1\lesssim C_2$ provided $C_1\leq C_3C_2$ with $C_3$ being a generic constant independent of the mesh size and polynomial degree. We say $C_1\eqsim C_2$ provided $C_1\lesssim C_2$ and $C_2\lesssim C_1$. 

Given an interior face $E$ shared by elements $T_+$, $T_-$, 
let $\llbracket w\rrbracket_E=w|_{T_+}-w|_{T_-}$ denote the jump of a piecewise smooth function $w$ across the face $E$, where the unit normal $n$ along $E$ is pointing from $T_+$ to $T_-$.
If necessary, an operator $D$ is understood in the distributional sense, while $D_h$ is the piecewise version of $D$ on $\mathcal{T}_h$. For example, given a piecewise smooth vector field $\sigma$, the distributional divergence ${\rm div}\sigma\in H^{-1}(\Omega)$ of $\sigma$ reads
\begin{equation*}
\langle{\rm div}\sigma,v\rangle=({\rm div}_h\sigma,v)+(\llbracket\sigma\cdot n\rrbracket,v)_{\mathcal{E}_h},\quad v\in H_0^1(\Omega).
\end{equation*}

\subsection{Equilibration in H(grad)} For our purpose, we need to rewrite the classical equilibration results (cf.~\cite{BraessPillweinSchoberl2009,ErnVohralik2015,ErnVohralik2020}) as a posteriori error bounds for $H^{-1}$ residuals. Equilibration in $H^{-1}(\Omega)$ relies on the following space:
\begin{align*}
\mathcal{RT}^p_{\rm dG}(\Omega_i)=&\{\tau_h\in L^2(\Omega_i,\mathbb{V}): \tau_h|_T\in\mathcal{P}_kx+\mathcal{P}_k(\mathbb{V})~\text{for each}\\
&~T\in\mathcal{T}_h\text{ with }T\subset\Omega_i, \tau_h\cdot n=0\text{ on }\partial\Omega_i\backslash\partial\Omega\},    
\end{align*}
the space of broken Raviart--Thomas (RT) finite element space on $\mathcal{T}_h\cap\Omega_i$ with vanishing normal trace on $\partial\Omega_i\backslash\partial\Omega$. 
The local conforming RT  space is
\[
\mathcal{RT}_p(\Omega_i)=H({\rm div},\Omega_i)\cap\mathcal{RT}_{\rm dG}^p(\Omega_i).\]
We also need the local space of broken piecewise polynomials
\begin{align*}
\mathcal{P}^{\rm dG}_p(\Omega_i)=&\{v_h\in L^2(\Omega_i): v_h|_T\in\mathcal{P}_p\text{ for each }T\in\mathcal{T}_h\\
&\text{ with }T\subset\Omega_i,~(v_h,1)_{\Omega_i}=0\text{ if }a_i\in\mathring{\Omega}\}.
\end{align*}
Given a residual functional $R\in H^{-1}(\Omega)$, let $R_i\in H^{-1}(\Omega)$ be the localization of $R$ at the vertex $a_i$, i.e., 
\begin{equation*}
\langle R_i,v\rangle:=\langle R,v\phi_i\rangle\quad\forall v\in H_0^1(\Omega).    
\end{equation*} 
Equilibrated residual estimators for $\|R\|_{H^{-1}(\Omega)}$ require solving a series of the following problems surrounding each grid vertex, see \cite{BraessPillweinSchoberl2009}.
\begin{problem}[local mixed FEM]\label{prob:min_sigmaRT}
\begin{align*}
&\sigma_i^R=\arg\min_{\tau\in\mathcal{RT}_{\rm dG}^p(\Omega_i)}\|\tau\|_{L^2(\Omega_i)},\\
&\text{ subject to }{\rm div}\tau=R_i\quad\text{ in }H^{-1}(\Omega).    
\end{align*}
\end{problem}
Assume the residual $R$ has a piecewise polynomial representation, i.e.,
\[
\langle R,v\rangle=\sum_{T\in\mathcal{T}_h}(R_T,v)_T+\sum_{E\in\mathcal{E}_h}(R_E,v)_E,\quad v\in H_0^1(\Omega),
\]
where $R_T$ and $R_E$ are polynomials defined on $T$ and $E$, respectively.
The constraint ${\rm div}\tau=R_i$ means that 
\begin{align*}
{\rm div}\tau&=R_T\phi_i\quad\text{ in each }T\subset\Omega_i,\\
\llbracket\tau\cdot n\rrbracket&=R_E\phi_i\quad\text{ on each }E\subset\overline{\Omega}_i\backslash\partial\Omega.
\end{align*}
The minimizer $\sigma_i^R$ in Problem \ref{prob:min_sigmaRT} can be obtained by solving a hybridized mixed FEM for the Laplacian on $\Omega_i$ whenever the constraint set is non-empty. In the next lemma, we present an equilibrated a posteriori error estimates in terms of residuals, which is essentially equivalent to the result in \cite{BraessPillweinSchoberl2009}.
\begin{lemma}[Equilibrated  residual in $H^{-1}(\Omega)$]\label{lem:equi_residualH1}
Let $R\in H^{-1}(\Omega)$ satisfy $\langle R,\phi_i\rangle=0$ for each vertex $a_i\not\in\partial\Omega$ and admit a piecewise polynomial representation. Assume that Problem \ref{prob:min_sigmaRT} admits a solution. Then 
\[
\Big\|\sum_{i\in\mathcal{I}}\sigma_i^R\Big\|\lesssim\|R\|_{H^{-1}(\Omega)}\leq\Big\|\sum_{i\in\mathcal{I}}\sigma_i^R\Big\|.
\]
\end{lemma}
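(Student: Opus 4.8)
The statement comprises two inequalities, which I would establish separately. The right-hand inequality $\|\sum_{i\in\mathcal{I}}\sigma_i^R\|\geq\|R\|_{H^{-1}(\Omega)}$ is the guaranteed reliability bound with unit constant, and it follows from a direct duality argument. Writing $\sigma^R=\sum_{i\in\mathcal{I}}\sigma_i^R$, I would first note that the partition of unity $\sum_{i\in\mathcal{I}}\phi_i\equiv1$ combined with the localization identity $\langle R_i,v\rangle=\langle R,v\phi_i\rangle$ gives $\sum_{i}\langle R_i,v\rangle=\langle R,v\rangle$ for all $v\in H_0^1(\Omega)$. Since each constraint $\mathrm{div}\,\sigma_i^R=R_i$ holds in $H^{-1}(\Omega)$ and the distributional divergence of an $L^2(\Omega,\mathbb{V})$ field satisfies $\langle\mathrm{div}\,\tau,v\rangle=-(\tau,\nabla v)$, summation yields $\mathrm{div}\,\sigma^R=R$ in $H^{-1}(\Omega)$. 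Consequently, for every $v\in H_0^1(\Omega)$,
\[
\langle R,v\rangle=-(\sigma^R,\nabla v)\leq\|\sigma^R\|\,\|\nabla v\|,
\]
and dividing by $\|\nabla v\|=\|v\|_{H_0^1(\Omega)}$ and taking the supremum produces $\|R\|_{H^{-1}(\Omega)}\leq\|\sigma^R\|$ with constant one.

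For the left-hand (efficiency) inequality $\|\sigma^R\|\lesssim\|R\|_{H^{-1}(\Omega)}$ I would proceed in three steps. First, since each point of $\Omega$ lies in at most $d+1$ patches $\Omega_i$, a Cauchy--Schwarz estimate gives the finite-overlap bound $\|\sigma^R\|^2\lesssim\sum_{i\in\mathcal{I}}\|\sigma_i^R\|_{L^2(\Omega_i)}^2$; this step is purely geometric and $p$-independent. Second, I would control each local flux by a local dual norm of $R_i$. Introducing, for an interior vertex, the quantity $\|R_i\|_{*,\Omega_i}:=\sup\{\langle R_i,v\rangle:v\in H^1(\Omega_i),\,(v,1)_{\Omega_i}=0,\,\|\nabla v\|_{\Omega_i}=1\}$ (and the analogous space with homogeneous trace on $\partial\Omega_i\cap\partial\Omega$ for a boundary vertex), the compatibility hypothesis $\langle R,\phi_i\rangle=\langle R_i,1\rangle=0$ makes the constrained minimization consistent and this quantity finite and attained. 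The $p$-robust stability of the discrete minimizer over $\mathcal{RT}^p_{\rm dG}(\Omega_i)$ established in \cite{BraessPillweinSchoberl2009} then yields $\|\sigma_i^R\|_{L^2(\Omega_i)}\lesssim\|R_i\|_{*,\Omega_i}$ with constant independent of $p$.

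Third, I would relate the localized dual norms back to the global one. Choosing for each $i$ a maximizer $v_i$ with $\|\nabla v_i\|_{\Omega_i}=1$ and $\langle R_i,v_i\rangle=\|R_i\|_{*,\Omega_i}$, and testing $R$ against $w=\sum_{i\in\mathcal{I}}\|R_i\|_{*,\Omega_i}\,v_i\phi_i\in H_0^1(\Omega)$, I obtain
\[
\sum_{i\in\mathcal{I}}\|R_i\|_{*,\Omega_i}^2=\langle R,w\rangle\leq\|R\|_{H^{-1}(\Omega)}\,\|\nabla w\|.
\]
The product rule $\nabla(v_i\phi_i)=\phi_i\nabla v_i+v_i\nabla\phi_i$, the bound $\|\phi_i\|_{L^\infty}\leq1$, the inverse-mesh-size scaling of $\|\nabla\phi_i\|_{L^\infty}$, and a patchwise Poincaré inequality for $v_i$ (valid since $v_i$ is normalized modulo constants) give $\|\nabla(v_i\phi_i)\|_{\Omega_i}\lesssim1$; combined once more with finite overlap this yields $\|\nabla w\|^2\lesssim\sum_{i\in\mathcal{I}}\|R_i\|_{*,\Omega_i}^2$, whence $\big(\sum_{i\in\mathcal{I}}\|R_i\|_{*,\Omega_i}^2\big)^{1/2}\lesssim\|R\|_{H^{-1}(\Omega)}$. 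Chaining the three steps gives the claim.

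The main obstacle is the second step, namely the $p$-robust equivalence between the discrete local minimizer and the continuous local dual norm. This rests on the existence of a right inverse of the divergence on the reference patch that is stable uniformly in the polynomial degree, the technical core of \cite{BraessPillweinSchoberl2009}; I would invoke it rather than reprove it, so that it supplies the only $p$-dependent constant in the chain. The remaining care is bookkeeping the interior/boundary-vertex distinction in the compatibility condition and in the corresponding local Poincaré/Friedrichs inequalities.
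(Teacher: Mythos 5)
Your proposal is correct and follows essentially the same route as the paper: the upper bound by the distributional identity $\operatorname{div}\sigma^R=R$ plus Cauchy--Schwarz, and the lower bound by combining the $p$-robust local stability of the constrained minimizer from \cite{BraessPillweinSchoberl2009} with a localization of the $H^{-1}$ norm over the vertex patches. The only (immaterial) difference is in that last localization step: you test $R$ against a composite function $\sum_i\|R_i\|_{*,\Omega_i}v_i\phi_i$ built from local maximizers, whereas the paper bounds each $\|R_i\|_{V_i^*}$ directly by $|u_R|_{H^1(\Omega_i)}$ via the Riesz representor $u_R$ of $R$ and then sums using finite overlap; both hinge on the same cut-off estimate $|v\phi_i|_{H^1(\Omega_i)}\lesssim|v|_{H^1(\Omega_i)}$ from a patchwise Poincar\'e inequality.
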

\begin{proof}
By the definition of Problem \ref{prob:min_sigmaRT}, $\sigma:=\sum_i\sigma_i^R\in L^2(\Omega)$ satisfies ${\rm div}\sigma=R$ in $H^{-1}(\Omega)$ in the distributional sense. Then
\begin{align*}
\|R\|_{H^{-1}(\Omega)}&=\sup_{v\in H_0^1(\Omega), |v|_{H^1(\Omega)}=1}|\langle{\rm div}\sigma,v\rangle|\\
&=\sup_{v\in H_0^1(\Omega), |v|_{H^1(\Omega)}=1}|(\sigma,\nabla v)|\leq\|\sigma\|,
\end{align*}   
which confirms the upper bound.

Conversely, let $u_R\in H_0^1(\Omega)$ be the Riesz representor of $R\in H^{-1}(\Omega)$, i.e., $$(\nabla u_R,\nabla v)=\langle R,v\rangle\quad\forall v\in H^1_0(\Omega).$$ For each vertex $a_i\in\mathring{\Omega}$  or $a_i\in\partial\Omega$, let $V_i=\widetilde{H}^1(\Omega_i)$ or $V_i=H_0^1(\Omega_i)$, respectively. Then we have 
\begin{equation}\label{eq:riVi}
\begin{aligned}
\|R_i\|_{V_i^*}&=\sup_{v\in V_i, |v|_{H^1(\Omega_i)}=1}|\langle R,v\phi_i\rangle|\\
&=\sup_{v\in V_i, |v|_{H^1(\Omega_i)}=1}|(\nabla u_R,\nabla(v\phi_i))|\\
&\leq|u_R|_{H^1(\Omega_i)}|v\phi_i|_{H^1(\Omega_i)}\lesssim|u_R|_{H^1(\Omega_i)},
\end{aligned}
\end{equation}
where a Poincar\'e inequality $\|v\|\lesssim{\rm diam}(\Omega_i)|v|_{H^1(\Omega_i)}$ is used in the last inequality.
Combining \eqref{eq:riVi} with Theorem 7 of \cite{BraessPillweinSchoberl2009} shows that for each index $i\in\mathcal{I}$,
\[
\|\sigma_i^R\|_{L^2(\Omega_i)}\lesssim\|R_i\|_{V_i^*}\lesssim|u_R|_{H^1(\Omega_i)}. 
\]
In what follows, we have
\begin{align*}
\|\sigma\|&\lesssim\sum_{i\in\mathcal{I}}\|\sigma_i^R\|_{L^2(\Omega_i)}\lesssim\sum_{i\in\mathcal{I}}|u_R|_{H^1(\Omega_i)}\\
&\eqsim|u_R|_{H^1(\Omega)}=\|R\|_{H^{-1}(\Omega)}.
\end{align*}
The lower bound is verified.
\end{proof}
In the classical literature, equilibrated residual error estimators are derived from a Prager--Synge identity, while Lemma \ref{lem:equi_residualH1} analyzes the residual in $H^{-1}(\Omega)$ using a simple $L^2$ lifting based on distributional divergence.

\subsection{Operator Preconditioning} To obtain robust or guaranteed  a posteriori upper error bound, we use elementary tools from preconditioning theory to quantify the effect of auxiliary space decomposition. Prior to this work, connections between preconditioning and a posteriori error estimation have been investigated in \cite{LiZikatanov2021CAMWA,LiZikatanov2025mcom,LiShui2025arxiv}.
We say a linear operator $A: V\rightarrow V^*$ is Symmetric and Positive-definite (SPD) provided
\[
\langle Av,v\rangle>0~~~\forall v\in V\quad\&\quad \langle Av,v\rangle=0\Longrightarrow v=0.
\]
Similarly, we say a linear operator $B: V^*\rightarrow V$ is SPD if
\[
\langle r,Br\rangle>0~~~\forall r\in V^*\quad\&\quad \langle r,Br\rangle=0\Longrightarrow r=0.
\] 
The SPD operators $A$ and $B$ induce norms $\|v\|_A=\langle Av,v\rangle^\frac{1}{2}$ and $\|r\|_B=\langle r,Br\rangle^\frac{1}{2}$, respectively. In the following, we provide three operator induced norms on dual spaces as basic examples.
\begin{align*}
\|r\|_{H^{-1}(\Omega)}&=\langle r,A^{-1}r\rangle^\frac{1}{2}=\|r\|_{(-\Delta)^{-1}},~V=H_0^1(\Omega),~  A=-\Delta: V\rightarrow V^*,\\
\|r\|_{H({\rm curl},\Omega)^*}&=\langle r,A^{-1}r\rangle^\frac{1}{2}=\|r\|_{({\rm curlcurl}+{\rm id})^{-1}},~V=H_0({\rm curl},\Omega),~  A={\rm curlcurl}+{\rm id},\\
\|r\|_{H({\rm div},\Omega)^*}&=\langle r,A^{-1}r\rangle^\frac{1}{2}=\|r\|_{(-\nabla{\rm div}+{\rm id})^{-1}},~V=H_0({\rm div},\Omega),~A=-\nabla{\rm div}+{\rm id}.
\end{align*}

The next lemma is a cornerstone of many successful preconditioners for linear iterative solvers (cf.~\cite{Xu1996,HiptmairXu2007,Li2024FoCM}). 
\begin{lemma}[Fictitious Space Lemma]\label{lem:FSP}
Let $\overline{V}$ be a Hilbert space, $\bar{A}: \overline{V}\rightarrow \overline{V}^*$ be a SPD operator.
Assume $\Pi: \overline{V}\rightarrow V$ is a surjective linear operator, and 
\begin{itemize}
\item[{\rm (a)}] For any $\bar{v}\in\overline{V},$ it holds that $\|\Pi\bar{v}\|_A\leq c_0\|\bar{v}\|_{\bar{A}}$,
    \item[{\rm (b)}]
For each $v\in V$, there exists $\bar{v}\in \overline{V}$ such that 
$$\Pi\bar{v}=v,\quad\|\bar{v}\|_{\bar{A}}\leq c_1\|v\|_A.$$
\end{itemize}
Then for $B:=\Pi\bar{A}^{-1}\Pi^*$ we have
\begin{equation*}
    c_0^{-2}\langle r,Br\rangle\leq \langle r,A^{-1}r\rangle\leq c_1^2\langle r,Br\rangle,\quad\forall r\in V^*.
\end{equation*}
\end{lemma}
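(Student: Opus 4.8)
The plan is to translate every quantity into its dual-norm (supremum) form and then transport the two hypotheses across $\Pi$. The starting observation is the identity
\[
\langle r,Br\rangle=\langle r,\Pi\bar{A}^{-1}\Pi^*r\rangle=\langle\Pi^*r,\bar{A}^{-1}\Pi^*r\rangle=\|\Pi^*r\|_{\bar{A}^{-1}}^2,
\]
so that $\|r\|_B=\|\Pi^*r\|_{\bar{A}^{-1}}$, where $\bar{A}^{-1}:\overline{V}^*\rightarrow\overline{V}$ is the inverse of the SPD operator $\bar{A}$. I would then record the two variational characterizations to be used throughout: since $A$ and $\bar{A}$ induce inner products, Riesz representation gives
\[
\|r\|_{A^{-1}}=\sup_{0\neq v\in V}\frac{\langle r,v\rangle}{\|v\|_A},\qquad\|\Pi^*r\|_{\bar{A}^{-1}}=\sup_{0\neq\bar{v}\in\overline{V}}\frac{\langle\Pi^*r,\bar{v}\rangle}{\|\bar{v}\|_{\bar{A}}}=\sup_{0\neq\bar{v}\in\overline{V}}\frac{\langle r,\Pi\bar{v}\rangle}{\|\bar{v}\|_{\bar{A}}},
\]
the last equality being the definition of the adjoint $\Pi^*$. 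With these in hand, the two inequalities follow by matching numerators and denominators.

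For the upper bound $\langle r,A^{-1}r\rangle\leq c_1^2\langle r,Br\rangle$, equivalently $\|r\|_{A^{-1}}\leq c_1\|r\|_B$, I would fix an arbitrary $v\in V$ and invoke hypothesis (b) to obtain a preimage $\bar{v}$ with $\Pi\bar{v}=v$ and $\|\bar{v}\|_{\bar{A}}\leq c_1\|v\|_A$. Substituting $v=\Pi\bar{v}$ in the numerator and $\|v\|_A\geq c_1^{-1}\|\bar{v}\|_{\bar{A}}$ in the denominator yields
\[
\frac{\langle r,v\rangle}{\|v\|_A}\leq c_1\frac{\langle r,\Pi\bar{v}\rangle}{\|\bar{v}\|_{\bar{A}}}\leq c_1\|\Pi^*r\|_{\bar{A}^{-1}}=c_1\|r\|_B,
\]
and taking the supremum over $v$ gives the claim.

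For the lower bound $c_0^{-2}\langle r,Br\rangle\leq\langle r,A^{-1}r\rangle$, equivalently $\|r\|_B\leq c_0\|r\|_{A^{-1}}$, I would start from the second supremum above and push each $\bar{v}$ forward through $\Pi$. For $\bar{v}$ with $\Pi\bar{v}\neq0$, hypothesis (a) gives $\|\bar{v}\|_{\bar{A}}\geq c_0^{-1}\|\Pi\bar{v}\|_A$, whence
\[
\frac{\langle r,\Pi\bar{v}\rangle}{\|\bar{v}\|_{\bar{A}}}\leq c_0\frac{\langle r,\Pi\bar{v}\rangle}{\|\Pi\bar{v}\|_A}\leq c_0\sup_{0\neq v\in V}\frac{\langle r,v\rangle}{\|v\|_A}=c_0\|r\|_{A^{-1}};
\]
here the \emph{surjectivity} of $\Pi$ is precisely what guarantees that $\Pi\bar{v}$ ranges over all of $V$, so no admissible competitor for $\|r\|_{A^{-1}}$ is lost. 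The terms with $\Pi\bar{v}=0$ contribute a vanishing numerator and may be discarded. Taking the supremum over $\bar{v}$ completes the bound.

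The step I expect to require the most care is the passage through the adjoint and the inverse: I must verify that $B=\Pi\bar{A}^{-1}\Pi^*$ is well defined as a map $V^*\rightarrow V$, that the reduction $\|r\|_B=\|\Pi^*r\|_{\bar{A}^{-1}}$ is legitimate (which uses the self-adjointness of $\bar{A}^{-1}$), and that each supremum is taken over the correct set so that surjectivity is invoked cleanly in the lower bound and the preimage furnished by (b) is invoked cleanly in the upper bound. Everything else is routine bookkeeping with the variational definitions of the dual norms.
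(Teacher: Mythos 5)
Your proof is correct. Note that the paper itself states Lemma \ref{lem:FSP} without proof, citing the preconditioning literature (Nepomnyaschikh's fictitious space lemma as presented in \cite{Xu1996,HiptmairXu2007,Li2024FoCM}), so there is no in-paper argument to compare against; your duality-based derivation is the standard one found in those references. The reduction $\langle r,Br\rangle=\|\Pi^*r\|_{\bar A^{-1}}^2$, the sup characterizations of $\|r\|_{A^{-1}}$ and $\|\Pi^*r\|_{\bar A^{-1}}$, and the transport of hypotheses (a) and (b) across $\Pi$ are all handled correctly (restricting tacitly to test elements with nonnegative numerator, which is harmless since the suprema are attained there). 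One small correction to your commentary: surjectivity of $\Pi$ is not what the lower bound needs --- there you only require that each $\Pi\bar v$ is an \emph{admissible} competitor in the supremum defining $\|r\|_{A^{-1}}$, which is automatic; it is the upper bound that needs every $v\in V$ to have a controlled preimage, and that is exactly hypothesis (b) (which subsumes surjectivity). This does not affect the validity of the argument.
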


In \cite{LiZikatanov2025mcom}, Lemma \ref{lem:FSP} was used to derive parameter-robust a posteriori error estimates in $H(\rm curl)$ and $H(\rm div)$. In Sections \ref{sect:natural} and \ref{sect:stress}, Lemma \ref{lem:FSP} is the starting point of our $p$-robust and guaranteed a posteriori error estimates.

\section{Natural norm a posteriori error estimates}\label{sect:natural}
The proposed error estimators in this section rely on the following regular decomposition (cf.~\cite{Hiptmair2002,HiptmairXu2007}).
\begin{lemma}[Regular Decomposition in H(curl)]\label{thm:regular_Hcurl}
For any $v\in H_0({\rm curl},\Omega)$, there exist $\varphi\in H_0^1(\Omega)$ and $z\in H_0^1(\Omega,\mathbb{V}),$ such that 
\begin{align*}
v&=\nabla\varphi+z,\\
|\varphi|_{H^1(\Omega)}^2+|z|_{H^1(\Omega)}^2&\leq C_{\rm reg,c}\|v\|_{H({\rm curl},\Omega)}^2,
\end{align*}
where the constant $C_{\rm reg,c}>0$ depends on the domain $\Omega$.
\end{lemma}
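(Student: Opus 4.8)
The plan is to prove the regular decomposition in $H_0(\mathrm{curl},\Omega)$ by reducing it, via the divergence--curl structure of the de Rham complex, to an elliptic regularity statement for a vector Poisson problem on $\Omega$. Let me sketch the construction first.

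Given $v\in H_0(\mathrm{curl},\Omega)$, set $f=\mathrm{curl}\,v\in L^2(\Omega,\mathbb{V})$ (or $L^2(\Omega)$ in two dimensions). Since $\mathrm{curl}\,v$ has vanishing (distributional) divergence and satisfies a compatibility condition coming from the boundary trace $v\times n=0$, the field $f$ lies in the range of $\mathrm{curl}$ acting on a suitably regular space.

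\medskip

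\emph{Step 1: Recover a regular vector potential $z$.} I would first produce $z\in H_0^1(\Omega,\mathbb{V})$ with $\mathrm{curl}\,z=\mathrm{curl}\,v$ and $\|z\|_{H^1(\Omega)}\lesssim\|\mathrm{curl}\,v\|$. The standard device is to extend $f=\mathrm{curl}\,v$ by zero to a large ball or convex domain $\widetilde\Omega\supset\Omega$ (the zero extension stays divergence-free because of the boundary condition on $v$), solve the full-space vector Laplace problem $-\Delta w=f$ on $\widetilde\Omega$ with homogeneous Dirichlet data, and set $z=\mathrm{curl}\,w$ or use the Biot--Savart-type representation. Elliptic regularity on the smooth/convex auxiliary domain gives $w\in H^2$, hence $z\in H^1$ with the desired bound; the homogeneous boundary data together with the support of $f$ is what forces the restriction of $z$ to $\Omega$ to lie in $H_0^1(\Omega,\mathbb{V})$ after a further localization/cutoff. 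Here $\|z\|_{H^1(\Omega)}\lesssim\|\mathrm{curl}\,v\|\le\|v\|_{H(\mathrm{curl},\Omega)}$.

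\emph{Step 2: Extract the gradient part $\varphi$.} By construction $\mathrm{curl}(v-z)=0$ and $v-z\in H_0(\mathrm{curl},\Omega)$ with vanishing tangential trace, so on the (possibly non-simply-connected) Lipschitz domain $\Omega$ the curl-free field $v-z$ is a gradient of some $\varphi\in H_0^1(\Omega)$ modulo a harmonic cohomology contribution. Absorbing the finite-dimensional harmonic part into $z$ (it is smooth, hence contributes controllably to the $H^1$ seminorm), I obtain $v=\nabla\varphi+z$. The Poincar\'e inequality then yields $|\varphi|_{H^1(\Omega)}=\|\nabla\varphi\|=\|v-z\|\le\|v\|+\|z\|\lesssim\|v\|_{H(\mathrm{curl},\Omega)}$.

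\medskip

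\emph{Step 3: Combine the estimates.} Summing the bounds from Steps 1 and 2 produces $|\varphi|_{H^1(\Omega)}^2+|z|_{H^1(\Omega)}^2\le C_{\rm reg,c}\|v\|_{H(\mathrm{curl},\Omega)}^2$ with a constant depending only on $\Omega$ (through the extension operator, the elliptic regularity constant on $\widetilde\Omega$, the Poincar\'e constant, and the dimension of the first de Rham cohomology of $\Omega$). The main obstacle is Step 1: constructing a \emph{fully $H^1$-regular} potential $z$ with zero boundary trace, uniformly bounded by $\|\mathrm{curl}\,v\|$, requires care on a general Lipschitz polyhedron where global $H^2$ elliptic regularity fails. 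The clean route is to avoid regularity on $\Omega$ itself and instead transfer the problem to an auxiliary smooth convex domain via a stable Stein-type extension of $\mathrm{curl}\,v$, as in Hiptmair--Xu; the topological bookkeeping for non-simply-connected $\Omega$ in Step 2 is a secondary but genuine technical point. Since the statement only asserts existence of such a decomposition with a domain-dependent constant (not an explicit or $p$-robust one), I would cite the regular decomposition results of \cite{Hiptmair2002,HiptmairXu2007} rather than reconstruct the extension operator in detail.
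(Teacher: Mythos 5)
The paper states this lemma without proof, simply citing \cite{Hiptmair2002,HiptmairXu2007}, and your proposal ultimately does the same after sketching the standard extension-plus-vector-potential argument that underlies those references; so you are taking essentially the same route. Your sketch correctly identifies the genuine technical obstacles (obtaining the regular part in $H_0^1(\Omega,\mathbb{V})$ on a Lipschitz polyhedron and handling the cohomology), and deferring these to the cited literature is exactly what the paper does.
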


\subsection{H(curl) Elliptic Equation}\label{subsect:Hcurl} First we consider the following positive-definite H(curl) problem
\begin{equation}\label{eq:Hcurl_elliptic}
\begin{aligned}
    {\rm curlcurl}u+u&=f\quad\text{ in }\Omega\subset\mathbb{R}^3,\\
    u\times n&=0\quad\text{ on }\partial\Omega,
\end{aligned}
\end{equation}
and its finite element discretization:
$u_h\in V_h({\rm curl})$ satisfying
\begin{equation}\label{eq:Hcurl_elliptic_FEM}
    ({\rm curl}u_h,{\rm curl}v_h)+(u_h,v_h)=(f,v_h),\quad v_h\in V_h({\rm curl}). 
\end{equation}
Here  $V_h({\rm curl})\subset H_0({\rm curl},\Omega)$ is a N\'ed\'elec finite element space. With $r:=f-{\rm curlcurl}u_h-u_h\in H_0({\rm curl},\Omega)^*$, the following error residual relation holds:  
\begin{equation}\label{eq:uherror_curl}
    \|u-u_h\|_{H({\rm curl},\Omega)}=\|r\|_{H_0({\rm curl},\Omega)^*}=\|r\|_{{(\rm curlcurl}+\text{id})^{-1}},
\end{equation}
where $\text{id}: H_0({\rm curl},\Omega)\rightarrow H_0({\rm curl},\Omega)$ is the identity operator. To derive a posteriori error bound for $\|r\|_{({\rm curlcurl}+\text{id})^{-1}}$, it suffices to construct a preconditioner $B_{\rm c}: H_0({\rm curl},\Omega)^*\rightarrow H_0({\rm curl},\Omega)$ for ${\rm curlcurl}+\text{id}$ via a suitable fictitious space. Let $\overline{V}=H_0^1(\Omega)\times H_0^1(\Omega,\mathbb{V})$ and define $\Pi: \overline{V}\rightarrow V=H_0({\rm curl},\Omega)$ as
\[
\Pi(\varphi,z)=\nabla\varphi+z,\quad(\varphi,z)\in H_0^1(\Omega)\times H_0^1(\Omega,\mathbb{V}).
\]
The fictitious space $\overline{V}$ is naturally equipped with the vector $H^1$ inner product $\langle\bar{A}\bullet,\bullet\rangle$ where $\bar{A}={\rm diag}(-\Delta,-\bm{\Delta}): \overline{V}\rightarrow \overline{V}^*$.
Clearly, $\Pi=(\nabla,I_{\rm c})$ is a bounded operator, where $I_{\rm c}: H_0^1(\Omega,\mathbb{V})\hookrightarrow H_0({\rm curl},\Omega)$ is the inclusion. Lemma \ref{thm:regular_Hcurl} implies that $\overline{V}$, $\Pi$ fullfils the assumption (b) in Lemma \ref{lem:FSP} with $V=H_0({\rm curl},\Omega)$, $A={\rm curlcurl}+\text{id}$. In what follows, we conclude that
\begin{equation}\label{eq:Bc}
\begin{aligned}
    B_{\rm c}&=(\nabla,I_c)\begin{pmatrix}
        (-\Delta)^{-1}&O\\
        O&(-\bm{\Delta})^{-1}
    \end{pmatrix}\begin{pmatrix}
        \nabla^*\\I_{\rm c}^*
    \end{pmatrix}\\
    &=\nabla(-\Delta)^{-1}\nabla^*+I_{\rm c}(-\bm{\Delta})^{-1}I_{\rm c}^*
\end{aligned}
\end{equation}
is spectrally equivalent to ${\rm curlcurl}+\text{id}$. To be precise, 
\begin{equation}\label{eq:rcurl}
\begin{aligned}
&\langle r,B_{\rm c}r\rangle\lesssim\|u-u_h\|_{H({\rm curl},\Omega)}^2=\langle r,({\rm curlcurl}+\textbf{id})^{-1}r\rangle\\
&\leq C_{\rm reg,c}\langle r,B_{\rm c}r\rangle=C_{\rm reg,c}\big(\|\nabla^*r\|^2_{(-\Delta)^{-1}}+\|I_{\rm c}^*r\|^2_{(-\bm{\Delta})^{-1}}\big).
\end{aligned}
\end{equation}

The next theorem establishes a $p$-robust a posteriori error estimate for \eqref{eq:Hcurl_elliptic_FEM} by applying equilibrated flux construction for Poisson's equation to $\|\nabla^*r\|_{(-\Delta)^{-1}}=\|\nabla^*r\|_{H^{-1}(\Omega)}$ and $\|I_{\rm c}^*r\|_{(-\bm{\Delta})^{-1}}=\|I_{\rm c}^*r\|_{H^{-1}(\Omega,\mathbb{V})}$.
\begin{theorem}\label{thm:Hcurl}
Assume $V_h({\rm curl})$ is not the lowest order N\'ed\'elec edge element of the first kind. Let $u_h\in V_h({\rm curl})$ and $f$ are piecewise polynomials of degree $\leq p$. Let $\sigma_i\in\mathcal{RT}_p(\Omega_i)$ and $u_i\in\mathcal{P}_p^{\rm dG}(\Omega_i)$ solve
\begin{equation}\label{eq:sigmai}
\begin{aligned}
    (\sigma_i,\tau)_{\Omega_i}+({\rm div}\tau,u_i)_{\Omega_i}&=((f-u_h)\phi_i,\tau)_{\Omega_i},\quad\tau\in\mathcal{RT}_p(\Omega_i),\\
    ({\rm div}\sigma_i,v)_{\Omega_i}&=((f-u_h)\cdot\nabla\phi_i,v)_{\Omega_i},\quad v\in\mathcal{P}^{\rm dG}_p(\Omega_i).
\end{aligned}
\end{equation}
Let $\tau_i\in\mathcal{RT}_{p+1}(\Omega_i,\mathbb{V})$ and $\tilde{u}_i\in\mathcal{P}^{\rm dG}_{p+1}(\Omega_i,\mathbb{V})$ solve
\begin{equation}\label{eq:taui}
\begin{aligned}
&(\tau_i,\tau)_{\Omega_i}+({\rm div}\tau,\tilde{u}_i)_{\Omega_i}=({\rm mskw}(\phi_i{\rm curl}u_h),\tau)_{\Omega_i},\quad\tau\in\mathcal{RT}_{p+1}(\Omega_i,\mathbb{V}),\\
&({\rm div}\tau_i,v)_{\Omega_i}=((f-u_h)\phi_i+\nabla\phi_i\times{\rm curl}u_h,v)_{\Omega_i},\quad v\in\mathcal{P}^{\rm dG}_{p+1}(\Omega_i,\mathbb{V}).
\end{aligned}
\end{equation}
Then for the $H({\rm curl})$ conforming FEM \eqref{eq:Hcurl_elliptic_FEM}, it holds that 
\begin{align*}
&\Big\|\sum_{i\in\mathcal{I}}\sigma_i-(f-u_h)\phi_i\Big\|^2+\Big\|\sum_{i\in\mathcal{I}}\tau_i-{\rm mskw}(\phi_i{\rm curl}u_h)\Big\|^2\lesssim\|u-u_h\|^2_{H({\rm curl},\Omega)}\\
&\leq C_{\rm reg,c}\Big(\Big\|\sum_{i\in\mathcal{I}}\sigma_i-(f-u_h)\phi_i\Big\|^2+\Big\|\sum_{i\in\mathcal{I}}\tau_i-{\rm mskw}(\phi_i{\rm curl}u_h)\Big\|^2\Big).
\end{align*}
\end{theorem}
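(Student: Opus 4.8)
The plan is to treat \eqref{eq:rcurl} as given — it already packages the Fictitious Space Lemma and the regular decomposition into the spectral equivalence $\langle r,B_{\rm c}r\rangle\eqsim\|u-u_h\|_{H({\rm curl},\Omega)}^2$ with $\langle r,B_{\rm c}r\rangle=\|\nabla^*r\|_{H^{-1}(\Omega)}^2+\|I_{\rm c}^*r\|_{H^{-1}(\Omega,\mathbb{V})}^2$ — and to bound each of the two dual pieces by Lemma \ref{lem:equi_residualH1}. The first task is to write $\nabla^*r$ and $I_{\rm c}^*r$ as explicit $H^{-1}$ functionals with piecewise polynomial data. Starting from $\langle r,v\rangle=(f-u_h,v)-({\rm curl}\,u_h,{\rm curl}\,v)$ and ${\rm curl}\,\nabla\varphi=0$, I get $\langle\nabla^*r,\varphi\rangle=(f-u_h,\nabla\varphi)$, an $H^{-1}(\Omega)$ residual with flux data $f-u_h$. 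For the vector part the key manipulation is the pointwise algebraic identity $({\rm curl}\,u_h,{\rm curl}\,z)=-({\rm mskw}({\rm curl}\,u_h),\nabla z)$, a direct consequence of the skew structure of ${\rm mskw}$ consistent with \eqref{eq:div_mskw}; it gives $\langle I_{\rm c}^*r,z\rangle=(f-u_h,z)+({\rm mskw}({\rm curl}\,u_h),\nabla z)$, a vector $H^{-1}(\Omega,\mathbb{V})$ residual with matrix flux data ${\rm mskw}({\rm curl}\,u_h)$. Localizing by $v\mapsto v\phi_i$ and expanding $\nabla(z\phi_i)=\phi_i\nabla z+z\otimes\nabla\phi_i$, together with ${\rm mskw}(w)\nabla\phi_i=\nabla\phi_i\times w$, reproduces exactly the interior sources, jump data, and flux data appearing in \eqref{eq:sigmai} and \eqref{eq:taui}.

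Next I would verify the two hypotheses of Lemma \ref{lem:equi_residualH1} for each residual. The piecewise polynomial representation is immediate: since $f,u_h$ have degree $\le p$, both $f-u_h$ and ${\rm mskw}({\rm curl}\,u_h)$ are piecewise polynomial, and the interior sources and the normal-jump face data are then polynomials of the claimed degrees, which also fixes the orders $p$ and $p+1$ of the RT spaces in \eqref{eq:sigmai}--\eqref{eq:taui}. The compatibility condition $\langle R,\phi_i\rangle=0$ at interior vertices is where the discrete equation \eqref{eq:Hcurl_elliptic_FEM} enters: testing with $v_h=\nabla\phi_i\in V_h({\rm curl})$ gives $(f-u_h,\nabla\phi_i)=0=\langle\nabla^*r,\phi_i\rangle$, while testing with $v_h=\phi_i e_j$ (the hat function times the $j$-th coordinate vector) gives $\langle I_{\rm c}^*r,\phi_i e_j\rangle=0$. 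The latter requires $\phi_i e_j\in V_h({\rm curl})$, i.e. that the N\'ed\'elec space contain the continuous $\mathcal{P}_1$ vector fields; this holds for all higher order and second-kind spaces but fails for the lowest order edge element of the first kind, which is precisely the case excluded in the statement. At boundary vertices no compatibility is needed, matching the hypothesis of Lemma \ref{lem:equi_residualH1}.

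With the hypotheses in place, I would identify the computed fluxes with the minimizers of Problem \ref{prob:min_sigmaRT}. The first equation of \eqref{eq:sigmai} reads $(\sigma_i-(f-u_h)\phi_i,\tau)=-({\rm div}\,\tau,u_i)$ for all conforming $\tau\in\mathcal{RT}_p(\Omega_i)$, so $\sigma_i-(f-u_h)\phi_i$ is $L^2$-orthogonal to the divergence-free RT fields, while the second equation together with $\sigma_i\cdot n=0$ on $\partial\Omega_i\backslash\partial\Omega$ yields ${\rm div}(\sigma_i-(f-u_h)\phi_i)=(\nabla^*r)_i$ in $H^{-1}$. These are exactly the optimality conditions characterizing the minimum-norm flux, so $\sigma_i-(f-u_h)\phi_i=\sigma_i^{\nabla^*r}$; the same argument applied row-wise identifies $\tau_i-{\rm mskw}(\phi_i{\rm curl}\,u_h)$ with the equilibrated flux of $I_{\rm c}^*r$. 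Lemma \ref{lem:equi_residualH1} then gives $\big\|\sum_i(\sigma_i-(f-u_h)\phi_i)\big\|\eqsim\|\nabla^*r\|_{H^{-1}(\Omega)}$ and $\big\|\sum_i(\tau_i-{\rm mskw}(\phi_i{\rm curl}\,u_h))\big\|\eqsim\|I_{\rm c}^*r\|_{H^{-1}(\Omega,\mathbb{V})}$; adding the squares and inserting \eqref{eq:rcurl} produces the asserted two-sided bound with the efficiency constant absorbed into $\lesssim$ and the reliability constant $C_{\rm reg,c}$.

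I expect the main obstacle to be the verification that the local mixed problems are well posed and genuinely reproduce the equilibrated residuals, i.e. establishing both the compatibility condition (which guarantees solvability) and the exact correspondence with Problem \ref{prob:min_sigmaRT}. In particular, pinning down why the lowest order first-kind N\'ed\'elec element must be excluded — traced to whether $\phi_i e_j$ lies in $V_h({\rm curl})$ so that Galerkin orthogonality forces $\langle I_{\rm c}^*r,\phi_i e_j\rangle=0$ — is the subtle point that links the abstract equilibration hypotheses to the concrete finite element space, and is what I would treat most carefully.
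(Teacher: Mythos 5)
Your proposal is correct and follows essentially the same route as the paper: reduce to the spectral equivalence \eqref{eq:rcurl}, localize $\nabla^*r$ and $I_{\rm c}^*r$ into $H^{-1}$ residuals with flux data $(f-u_h)\phi_i$ and ${\rm mskw}(\phi_i{\rm curl}u_h)$ via the identity \eqref{eq:div_mskw}, verify the compatibility $\langle R,\phi_i\rangle=0$ by Galerkin orthogonality with $\nabla\phi_i$ and $\phi_ie_j$ (which is exactly where the lowest-order first-kind element is excluded), identify the shifted local mixed solutions with the minimizers of Problem \ref{prob:min_sigmaRT}, and invoke Lemma \ref{lem:equi_residualH1}. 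The only cosmetic difference is that you write the global residuals first and then localize, while the paper localizes directly; the substance is identical.
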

\begin{proof}
For each boundary vertex $a_i\in\partial\Omega$, both \eqref{eq:sigmai} and \eqref{eq:taui} have unique solutions due to the inf-sup stability of $\mathcal{RT}_p(\Omega_i)\times\mathcal{P}_p^{\rm dG}(\Omega_i)$. 
For each interior vertex $a_i\in\mathring{\Omega}$, the definition of \eqref{eq:Hcurl_elliptic_FEM} implies 
\begin{equation}\label{eq:compatibility}
\begin{aligned}
((f-u_h)\cdot\nabla\phi_i,1)_{\Omega_i}&=0,\\   
((f-u_h)\phi_i+\nabla\phi_i\times{\rm curl}u_h,e_j)_{\Omega_i}&=0,\quad j=1, 2, 3,
\end{aligned}
\end{equation}
where $e_j\in\mathbb{R}^3$ is the $j$-th unit vector.
The compatibility conditions \eqref{eq:compatibility}   ensure the well-posedness of the mixed methods \eqref{eq:sigmai} and \eqref{eq:taui} surrounding an interior vertex $a_i$.   

Given $v\in H_0^1(\Omega)$, the localization of $\nabla^*r$ is 
\begin{align*}
\langle (\nabla^*r)_i,v\rangle&=\langle f-{\rm curlcurl}u_h-u_h,\nabla(\phi_iv)\rangle=(f-u_h,\nabla(\phi_iv))\\
&=((f-u_h)\cdot\nabla\phi_i,v)+((f-u_h)\phi_i,\nabla v)\\
&=((f-u_h)\cdot\nabla\phi_i,v)-\langle{\rm div}((f-u_h)\phi_i),v\rangle.
\end{align*}
A change of variables implies that Problem  \ref{prob:min_sigmaRT} with $R=\nabla^*r$ is equivalent to
\begin{equation}\label{eq:Hcurl_sigmai}
\begin{aligned}
&\sigma_i=\arg\min_{\tau\in\mathcal{RT}_p(\Omega^h_i)}\|\tau-(f-u_h)\phi_i\|_{L^2(\Omega_i)},\\
&\text{ subject to }{\rm div}\tau=(f-u_h)\cdot\nabla\phi_i,    
\end{aligned}
\end{equation}
which is further equivalent to \eqref{eq:sigmai} due to the assumption $f-u_h\in\mathcal{P}_p^{\rm dG}(\Omega_i,\mathbb{V})$. We point out that $${\rm div}(\sigma_i-(f-u_h)\phi_i)=(\nabla^*r)_i\quad\text{ in }H^{-1}(\Omega).$$
Similarly, the localization of $I_{\rm c}^*r$ is 
\begin{align*}
\langle (I_{\rm c}^*r)_i,v\rangle&=\langle f-{\rm curlcurl}u_h-u_h,\phi_iv\rangle\\
&=((f-u_h)\phi_i,v)-({\rm curl} u_h,{\rm curl}(\phi_i v))\\
&=((f-u_h)\phi_i,v)-({\rm curl}u_h,\nabla\phi_i\times v)-(\phi_i{\rm curl}u_h,{\rm curl}v)\\
&=((f-u_h)\phi_i+\nabla\phi_i\times{\rm curl}u_h,v)-\langle{\rm div}\{{\rm mskw}(\phi_i{\rm curl}u_h)\},v\rangle,
\end{align*}
where we used the algebraic identity \eqref{eq:div_mskw}.
Then a matrix-valued version of Problem  \ref{prob:min_sigmaRT} with a vector-valued residual $R=I_{\rm c}^*r=f-{\rm curlcurl}u_h-u_h\in H^{-1}(\Omega,\mathbb{V})$ reduces to 
\begin{equation}\label{eq:Hcurl_taui}
\begin{aligned}
&\tau_i=\arg\min_{\tau\in\mathcal{RT}_{p+1}(\Omega_i,\mathbb{V})}\|\tau-{\rm mskw}(\phi_i{\rm curl}u_h)\|_{L^2(\Omega_i)},\\
&\text{ subject to }{\rm div}\tau=(f-u_h)\phi_i+\nabla\phi_i\times{\rm curl}u_h, 
\end{aligned}
\end{equation}
which is further  equivalent to \eqref{eq:taui}. It is also noted that $${\rm div}(\tau_i-{\rm mskw}(\phi_i{\rm curl}u_h))=(I_{\rm c}^*r)_i\quad\text{ in }H^{-1}(\Omega,\mathbb{V}).$$

Due to $\nabla V_h\subset V_h({\rm curl})$ and $V_h(\mathbb{V})\subset V_h({\rm curl})$ (because $V_h({\rm curl})$ is not the lowest order edge element space), the two residuals $\nabla^*r$ and $I_{\rm c}^*r$ satisfy the assumption in Lemma \ref{lem:equi_residualH1}, i.e., $\langle\nabla^*r,\phi_i\rangle=\langle I_{\rm c}^*r,\phi_i\rangle=0$ for each interior grid vertex $a_i$. It then follows from Lemma \ref{lem:equi_residualH1} and \eqref{eq:Hcurl_sigmai}, \eqref{eq:Hcurl_taui} that
\begin{equation}\label{eq:gradIcr}
\begin{aligned}
\Big\|\sum_{i\in\mathcal{I}}\sigma_i-(f-u_h)\phi_i\Big\|&\lesssim\|\nabla^*r\|^2_{(-\Delta)^{-1}}\leq\Big\|\sum_{i\in\mathcal{I}}\sigma_i-(f-u_h)\phi_i\Big\|,\\
\Big\|\sum_{i\in\mathcal{I}}\tau_i-{\rm mskw}(\phi_i{\rm curl}u_h)\Big\|&\lesssim\|I_{\rm c}^*r\|^2_{(-\bm{\Delta})^{-1}}\leq\Big\|\sum_{i\in\mathcal{I}}\tau_i-{\rm mskw}(\phi_i{\rm curl}u_h)\Big\|.
\end{aligned}
\end{equation}
Finally, combining  \eqref{eq:rcurl} with \eqref{eq:gradIcr} completes the proof. 
\end{proof}

\begin{remark}
The equilibration procedure used in Theorem \ref{thm:Hcurl} is user-friendly in the sense that the codes developed for equilibrated residual error estimators for $H(\rm grad)$ problems could be reused here. However, Theorem \ref{thm:Hcurl} does not apply to the lowest order edge element because the nodal auxiliary finite element space $V_h(\mathbb{V})$ is not a subspace of $V_h(\rm curl)$ in the lowest order case.  
\end{remark}

\begin{corollary}\label{cor:Hcurl}
    When $\Omega$ is convex, the constant $C_{\rm reg,c}=1$ in Theorem \ref{thm:Hcurl}. In particular, for the $H({\rm curl})$ conforming FEM \eqref{eq:Hcurl_elliptic_FEM}, it holds that
    \begin{align*}
\|u-u_h\|^2_{H({\rm curl},\Omega)}\leq \Big\|\sum_{i\in\mathcal{I}}\sigma_i-(f-u_h)\phi_i\Big\|^2+\Big\|\sum_{i\in\mathcal{I}}\tau_i-{\rm mskw}(\phi_i{\rm curl}u_h)\Big\|^2.
\end{align*}
\end{corollary}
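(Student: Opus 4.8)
The plan is to track the constant $C_{\rm reg,c}$ to its source and show it collapses to $1$ on convex domains. In the spectral chain \eqref{eq:rcurl}, $C_{\rm reg,c}$ is exactly the square of the stability constant $c_1$ of hypothesis (b) in the Fictitious Space Lemma \ref{lem:FSP}, which in turn is the constant of the regular decomposition Lemma \ref{thm:regular_Hcurl}. Thus it suffices to show that, when $\Omega$ is convex, every $v\in H_0({\rm curl},\Omega)$ admits a splitting $v=\nabla\varphi+z$ with $\varphi\in H_0^1(\Omega)$, $z\in H_0^1(\Omega,\mathbb{V})$ and the sharpened bound $|\varphi|_{H^1(\Omega)}^2+|z|_{H^1(\Omega)}^2\le\|v\|^2+\|{\rm curl}\,v\|^2$. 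Inserting $c_1=1$ into the argument of Theorem \ref{thm:Hcurl} then removes the generic constant and upgrades its upper estimate to the guaranteed bound claimed in the corollary.

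To realize the unit constant I would fix the gradient part by an $L^2$-orthogonal projection rather than an arbitrary lifting. Let $\varphi\in H_0^1(\Omega)$ solve $(\nabla\varphi,\nabla\psi)=(v,\nabla\psi)$ for all $\psi\in H_0^1(\Omega)$, so that $\nabla\varphi$ is the $L^2$-projection of $v$ onto $\nabla H_0^1(\Omega)$, and set $z:=v-\nabla\varphi$. Approximating $\varphi$ by $C_c^\infty(\Omega)$ functions shows $\nabla\varphi\in H_0({\rm curl},\Omega)$, whence $z\in H_0({\rm curl},\Omega)$; orthogonality gives $\|v\|^2=\|\nabla\varphi\|^2+\|z\|^2$, the defining identity says that $z$ is weakly divergence-free, and ${\rm curl}\,z={\rm curl}\,v$. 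The decisive input is the sharp Gaffney-type inequality on convex domains: for $w\in H_0({\rm curl},\Omega)\cap H({\rm div},\Omega)$ one has $w\in H^1(\Omega,\mathbb{V})$ and $|w|_{H^1(\Omega)}^2\le\|{\rm curl}\,w\|^2+\|{\rm div}\,w\|^2$, with constant exactly one because the boundary curvature term in the underlying integration-by-parts identity is nonnegative on a convex boundary. Applying this to $w=z$ yields $|z|_{H^1(\Omega)}^2\le\|{\rm curl}\,v\|^2$, and since $|\varphi|_{H^1(\Omega)}^2=\|\nabla\varphi\|^2\le\|\nabla\varphi\|^2+\|z\|^2=\|v\|^2$, adding the two gives the required bound with no loss of constant.

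I expect the main obstacle to be reconciling this construction with the precise auxiliary space $\overline{V}=H_0^1(\Omega)\times H_0^1(\Omega,\mathbb{V})$ used in \eqref{eq:rcurl}: the orthogonal splitting produces a $z$ carrying only the tangential trace $z\times n=0$, whereas $\overline{V}$ demands the full Dirichlet condition, and the sign of the curvature boundary term in the Gaffney identity must be checked for exactly this boundary condition. It is solely at this step that convexity of $\Omega$ is invoked; for a field already in $H_0^1$ the identity $|z|_{H^1}^2=\|{\rm curl}\,z\|^2+\|{\rm div}\,z\|^2$ holds with no geometric hypothesis, so the real work is the convex-domain elliptic regularity that delivers a stable splitting into the full-Dirichlet space without degrading the constant. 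The remaining items---the orthogonality identities, the membership $\nabla\varphi\in H_0({\rm curl},\Omega)$, and the norm bookkeeping---are routine, and the guaranteed estimate follows by substituting $C_{\rm reg,c}=1$ into Theorem \ref{thm:Hcurl}.
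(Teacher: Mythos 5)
Your proof follows the same route as the paper: the $L^2$-orthogonal Helmholtz splitting $v=\nabla\varphi+z$ with $z$ divergence-free (the paper writes $z={\rm curl}\,\psi$, which is your construction in potential form), combined with the sharp convex-domain embedding $|z|_{H^1(\Omega)}^2\le\|{\rm curl}\,z\|^2+\|{\rm div}\,z\|^2$ from Girault--Raviart, yielding $C_{\rm reg,c}=1$ and hence the guaranteed bound. The trace subtlety you flag---that $z$ carries only $z\times n=0$ rather than the full Dirichlet condition required by the auxiliary space $H_0^1(\Omega,\mathbb{V})$---is equally present and equally unaddressed in the paper's own proof, so it is not a defect of your argument relative to theirs.
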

\begin{proof}
Given $v\in H_0({\rm curl},\Omega)$, let $v=\nabla\varphi+{\rm curl}\psi$ be the Helmholtz decomposition of $v$, where $\varphi\in H_0^1(\Omega)$ and $\psi\in H({\rm curl},\Omega)$. Clearly ${\rm curl}v={\rm curl}{\rm curl}\psi$ and 
\begin{equation}\label{eq:L2orthogonal}
\|v\|^2=\|\nabla\varphi\|^2+\|{\rm curl}\psi\|^2.  
\end{equation}
Due to the convexity of $\Omega$, we have (cf.~\cite{GiraultRaviart1986})
\begin{equation}\label{eq:Grisvard}
|{\rm curl}\psi|_{H^1(\Omega)}^2\leq\|{\rm curl}{\rm curl}\psi\|^2+\|{\rm div}{\rm curl}\psi\|^2=\|{\rm curl}v\|^2.
\end{equation}
Combining \eqref{eq:L2orthogonal} with \eqref{eq:Grisvard} confirms that $C_{\rm reg,c}=1$ in Lemma \ref{thm:regular_Hcurl}.
\end{proof}

As far as we know, equilibrated a posteriori error estimates for the SPD $H(\rm curl)$ problem \eqref{eq:Hcurl_elliptic_FEM} have not been considered prior to this work. Instead, the following  semi-definite curl-curl equation (cf.~\cite{BraessSchoberl2008,ChaumontErnVohralik2021,ChaumontVohralik2023,Chaumont2023,Chaumont2025}) often serves as a model problem for deriving equilibrated residual error estimators in $H(\rm curl)$: Find $u\in H_0(\rm curl,\Omega)$ such that 
\begin{equation}\label{eq:curlcurl}
\begin{aligned}
    {\rm curlcurl}u&=f\quad\text{ in }\Omega,\\
    u\times n&=0\quad\text{ on }\partial\Omega.
\end{aligned}
\end{equation}
Under the compatibility condition ${\rm div}f=0$, the solution of \eqref{eq:curlcurl} exists and ${\rm curl} u$ is unique.
The corresponding FEM seeks $u_h\in V_h({\rm curl})$ such that  
\begin{equation}\label{eq:curlcurl_FEM}
({\rm curl}u_h,{\rm curl}v_h)=(f,v_h)\quad\forall v\in V_h({\rm curl}).
\end{equation}
Let $V_h({\rm grad})\subset H_0^1(\Omega)$ be the Lagrange finite element space of degree $\leq p+1$. In practice, it is equivalent to introduce a Lagrange multiplier $\sigma_h\in V_h(\rm grad)$  and consider the saddle-point problem
\begin{equation}\label{eq:curlcurl_FEM_saddle}
    \begin{aligned}
        ({\rm curl} u_h,{\rm curl}v_h)+(v_h,\nabla\sigma_h)&=(f,v_h),\quad v_h\in V_h(\rm curl),\\
        (v_h,\nabla\tau_h)&=0,\quad \tau_h\in V_h(\rm grad).
    \end{aligned}
\end{equation}
It is easy to see  \eqref{eq:curlcurl_FEM_saddle} yields the same ${\rm curl}u_h$ as  \eqref{eq:curlcurl_FEM}.

We are interested in $p$-robust a posteriori error estimate of  $\|{\rm curl}(u-u_h)\|$. The next corollary from Theorem \ref{thm:regular_Hcurl} is useful.
\begin{lemma}\label{cor:regular_Hcurl}
For any $v\in H_0({\rm curl},\Omega)$, there exists $z\in H_0^1(\Omega,\mathbb{V})$ such that 
\begin{align*}
    {\rm curl}v&={\rm curl}z,\\
    |z|_{H^1(\Omega)}&\leq C_{\rm curl}\|v\|_{H({\rm curl},\Omega)},
\end{align*}
where the constant $C_{\rm curl}>0$ depends on the domain $\Omega$.
\end{lemma}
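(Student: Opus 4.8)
The plan is to derive this statement directly from the Regular Decomposition in $H({\rm curl})$, namely Lemma \ref{thm:regular_Hcurl}, by discarding the gradient component. First I would apply Lemma \ref{thm:regular_Hcurl} to the given $v\in H_0({\rm curl},\Omega)$ to obtain $\varphi\in H_0^1(\Omega)$ and $z\in H_0^1(\Omega,\mathbb{V})$ with
\[
v=\nabla\varphi+z,\qquad |\varphi|_{H^1(\Omega)}^2+|z|_{H^1(\Omega)}^2\leq C_{\rm reg,c}\|v\|_{H({\rm curl},\Omega)}^2.
\]
The vector field $z$ produced here is exactly the one we want, so no new construction is needed.

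Next I would verify the identity ${\rm curl}\,v={\rm curl}\,z$. Since the curl of a gradient vanishes, i.e. ${\rm curl}(\nabla\varphi)=0$, applying ${\rm curl}$ to the decomposition gives
\[
{\rm curl}\,v={\rm curl}(\nabla\varphi+z)={\rm curl}\,z.
\]
One should note that this holds in $L^2(\Omega,\mathbb{V})$ because both $\varphi\in H_0^1(\Omega)$ and $z\in H_0^1(\Omega,\mathbb{V})$ are sufficiently regular for the classical identity ${\rm curl}\,\nabla=0$ to apply.

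Finally, the quantitative bound follows by simply dropping the nonnegative term $|\varphi|_{H^1(\Omega)}^2$ from the regular-decomposition estimate:
\[
|z|_{H^1(\Omega)}^2\leq|\varphi|_{H^1(\Omega)}^2+|z|_{H^1(\Omega)}^2\leq C_{\rm reg,c}\|v\|_{H({\rm curl},\Omega)}^2,
\]
so the claim holds with $C_{\rm curl}=\sqrt{C_{\rm reg,c}}$, a constant depending only on $\Omega$. In truth there is no serious obstacle in this argument; the content is entirely inherited from Lemma \ref{thm:regular_Hcurl}, and the only point warranting a line of care is confirming that the boundary condition $z\in H_0^1(\Omega,\mathbb{V})$ and the distributional identity ${\rm curl}\,\nabla\varphi=0$ are both already guaranteed by the cited regular decomposition, so that $z$ requires no further modification.
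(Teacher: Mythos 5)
Your argument is correct and is exactly the one the paper intends: the statement is introduced as a direct corollary of the regular decomposition Lemma \ref{thm:regular_Hcurl}, obtained by writing $v=\nabla\varphi+z$, using ${\rm curl}\,\nabla\varphi=0$, and dropping the $|\varphi|_{H^1(\Omega)}^2$ term to get $C_{\rm curl}=\sqrt{C_{\rm reg,c}}$. No issues.
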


Consider the quotient space $\widetilde{H}_0({\rm curl},\Omega)=H_0({\rm curl},\Omega)/\nabla H_0^1(\Omega)$. We note that ${\rm curlcurl}: \widetilde{H}_0({\rm curl},\Omega)\rightarrow \widetilde{H}_0({\rm curl},\Omega)^*$ is SPD. 
Theorem \ref{thm:regular_Hcurl} implies that $\overline{V}=H_0^1(\Omega,\mathbb{V})$, $\Pi=I_{\rm c}$ fulfills the assumption (b) in Lemma \ref{lem:FSP} with $V=\widetilde{H}_0({\rm curl},\Omega)$, $A={\rm curlcurl}$. In what follows, we conclude that $\widetilde{B}_{\rm c}=I_{\rm c}(-\bm{\Delta})^{-1}I^*_{\rm c}$ is spectrally equivalent to $({\rm curlcurl})^{-1}$. Let $r=f-{\rm curlcurl}u_h\in H_0({\rm curl},\Omega)^*$. We have 
\begin{align*}
\langle r,\widetilde{B}_{\rm c}r\rangle^\frac{1}{2}&\lesssim\|{\rm curl}(u-u_h)\|=\langle r,({\rm curlcurl})^{-1}r\rangle^\frac{1}{2}\\
&\leq C_{\rm curl}\langle r,\widetilde{B}_{\rm c}r\rangle^\frac{1}{2}=C_{\rm curl}\|I_{\rm c}^*r\|_{(-\bm{\Delta})^{-1}}.
\end{align*}
Then by the same a posteriori error control for $\|I_{\rm c}^*r\|_{(-\bm{\Delta})^{-1}}=\|I_{\rm c}^*r\|_{H^{-1}(\Omega,\mathbb{V})}$ as in the proof of Theorem \ref{thm:Hcurl}, we obtain a new $p$-robust a posteriori error estimate for the curl-curl equation.
\begin{theorem}\label{thm:curlcurl}
Assume $V_h({\rm curl})$ is not the lowest order N\'ed\'elec edge element of the first kind. Let $u_h\in V_h({\rm curl})$ and $f$ be piecewise polynomials of degree $\leq p$. Let $\sigma_i\in\mathcal{RT}_{p+1}(\Omega_i,\mathbb{V})$ and $u_i\in\mathcal{P}^{\rm dG}_{p+1}(\Omega_i,\mathbb{V})$ solve
\begin{equation}\label{eq:local_curlcurl}
\begin{aligned}
    (\sigma_i,\tau)_{\Omega_i}+({\rm div}\tau,u_i)_{\Omega_i}&=({\rm mskw}(\phi_i{\rm curl}u_h),\tau)_{\Omega_i},\quad\tau\in\mathcal{RT}_{p+1}(\Omega_i,\mathbb{V}),\\
    ({\rm div}\sigma_i,v)_{\Omega_i}&=(f\phi_i+\nabla\phi_i\times{\rm curl}u_h,v)_{\Omega_i},\quad v\in\mathcal{P}^{\rm dG}_{p+1}(\Omega_i,\mathbb{V}).
\end{aligned}
\end{equation}
Then for \eqref{eq:curlcurl_FEM} it holds that 
\begin{equation*}
\Big\|\sum_{i\in\mathcal{I}}\sigma_i-{\rm mskw}(\phi_i{\rm curl}u_h)\Big\|\lesssim\|{\rm curl}(u-u_h)\|\leq C_{\rm curl}\Big\|\sum_{i\in\mathcal{I}}\sigma_i-{\rm mskw}(\phi_i{\rm curl}u_h)\Big\|.
\end{equation*}
When $\Omega$ is convex, the constant $C_{\rm curl}=1$ and it holds that
    \begin{equation*}
\|{\rm curl}(u-u_h)\|\leq \Big\|\sum_{i\in\mathcal{I}}\sigma_i-{\rm mskw}(\phi_i{\rm curl}u_h)\Big\|.
\end{equation*}
\end{theorem}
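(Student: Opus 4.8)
The plan is to transcribe the ${\rm curl}$-part of the proof of Theorem~\ref{thm:Hcurl} into the quotient setting $\widetilde{H}_0({\rm curl},\Omega)=H_0({\rm curl},\Omega)/\nabla H_0^1(\Omega)$, on which ${\rm curlcurl}$ is SPD with $\|v\|_A^2=\|{\rm curl}v\|^2$. The spectral equivalence between $\widetilde{B}_{\rm c}=I_{\rm c}(-\bm{\Delta})^{-1}I_{\rm c}^*$ and $({\rm curlcurl})^{-1}$, together with the identity $\langle r,\widetilde{B}_{\rm c}r\rangle^{1/2}=\|I_{\rm c}^*r\|_{(-\bm{\Delta})^{-1}}$, has already been recorded in the display preceding the statement; consequently it suffices to bound $\|I_{\rm c}^*r\|_{(-\bm{\Delta})^{-1}}=\|I_{\rm c}^*r\|_{H^{-1}(\Omega,\mathbb{V})}$ from above and below by the equilibrated quantity $\|\sum_i\sigma_i-{\rm mskw}(\phi_i{\rm curl}u_h)\|$. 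I first note that $r=f-{\rm curlcurl}u_h={\rm curlcurl}(u-u_h)$ annihilates $\nabla H_0^1(\Omega)$, since $\langle r,\nabla\varphi\rangle=({\rm curl}(u-u_h),{\rm curl}\nabla\varphi)=0$, so $r$ descends to the quotient dual and the framework applies.

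First I would settle well-posedness of the local saddle-point systems \eqref{eq:local_curlcurl}. At a boundary vertex this is immediate from the inf-sup stability of $\mathcal{RT}_{p+1}(\Omega_i,\mathbb{V})\times\mathcal{P}_{p+1}^{\rm dG}(\Omega_i,\mathbb{V})$. At an interior vertex the matrix RT test functions carry vanishing normal trace on all of $\partial\Omega_i$, so solvability hinges on the compatibility condition
\begin{equation*}
(f\phi_i+\nabla\phi_i\times{\rm curl}u_h,e_j)_{\Omega_i}=0,\qquad j=1,2,3.
\end{equation*}
I would derive this from the discrete equation \eqref{eq:curlcurl_FEM}: because $V_h({\rm curl})$ is not the lowest order space we have $V_h(\mathbb{V})\subset V_h({\rm curl})$, so $\phi_ie_j$ is an admissible test function and Galerkin orthogonality gives $\langle r,\phi_ie_j\rangle=0$; the localization formula below then shows the left-hand side above equals exactly $\langle r,\phi_ie_j\rangle$.

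The heart of the argument is the localization of the vector residual $I_{\rm c}^*r=f-{\rm curlcurl}u_h\in H^{-1}(\Omega,\mathbb{V})$. For a vector test function $v\in H_0^1(\Omega,\mathbb{V})$, expanding ${\rm curl}(\phi_iv)=\nabla\phi_i\times v+\phi_i{\rm curl}v$, rewriting $({\rm curl}u_h,\nabla\phi_i\times v)=-(\nabla\phi_i\times{\rm curl}u_h,v)$ by the scalar triple product, and converting $(\phi_i{\rm curl}u_h,{\rm curl}v)$ into $\langle{\rm div}\{{\rm mskw}(\phi_i{\rm curl}u_h)\},v\rangle$ via \eqref{eq:div_mskw}, I obtain exactly as in Theorem~\ref{thm:Hcurl} (now with $f$ in place of $f-u_h$)
\begin{equation*}
\langle(I_{\rm c}^*r)_i,v\rangle=(f\phi_i+\nabla\phi_i\times{\rm curl}u_h,v)-\langle{\rm div}\{{\rm mskw}(\phi_i{\rm curl}u_h)\},v\rangle.
\end{equation*}
A change of variables identifies the matrix-valued Problem~\ref{prob:min_sigmaRT} with residual $R=I_{\rm c}^*r$ as the constrained minimization equivalent to \eqref{eq:local_curlcurl}, whose minimizer is $\sigma_i-{\rm mskw}(\phi_i{\rm curl}u_h)$ and satisfies ${\rm div}(\sigma_i-{\rm mskw}(\phi_i{\rm curl}u_h))=(I_{\rm c}^*r)_i$. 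Since the orthogonality hypothesis $\langle I_{\rm c}^*r,\phi_ie_j\rangle=0$ of Lemma~\ref{lem:equi_residualH1} holds at every interior vertex, the vector-valued Lemma~\ref{lem:equi_residualH1} yields $\|\sum_i\sigma_i-{\rm mskw}(\phi_i{\rm curl}u_h)\|\eqsim\|I_{\rm c}^*r\|_{(-\bm{\Delta})^{-1}}$, and chaining this with the spectral-equivalence display gives the two-sided estimate.

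For the convex case I would reprise the Helmholtz argument of Corollary~\ref{cor:Hcurl}: writing $v=\nabla\varphi+{\rm curl}\psi$ and invoking Grisvard's bound \eqref{eq:Grisvard} gives $|{\rm curl}\psi|_{H^1(\Omega)}\leq\|{\rm curl}v\|=\|v\|_A$, which is precisely assumption~(b) of Lemma~\ref{lem:FSP} with $c_1=1$ relative to the quotient norm; hence $C_{\rm curl}=1$ and the reliability constant in the upper bound is explicit. I expect the only genuine friction to be the bookkeeping of the ${\rm mskw}$/integration-by-parts identity and the verification of the interior-vertex compatibility condition; once these are checked the statement is a direct specialization of the curl-part of Theorem~\ref{thm:Hcurl}.
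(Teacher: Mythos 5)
Your proposal is correct and takes essentially the same route as the paper: the paper's argument for Theorem~\ref{thm:curlcurl} is precisely the quotient-space/fictitious-space reduction recorded in the display preceding the statement, followed by ``the same a posteriori error control for $\|I_{\rm c}^*r\|_{(-\bm{\Delta})^{-1}}$ as in the proof of Theorem~\ref{thm:Hcurl}'', which is exactly what you transcribe (localization via \eqref{eq:div_mskw}, interior-vertex compatibility from testing \eqref{eq:curlcurl_FEM} with $\phi_ie_j$, and the vector-valued Lemma~\ref{lem:equi_residualH1}), with the convex case handled as in Corollary~\ref{cor:Hcurl}. No gaps beyond those already present in the paper's own exposition.
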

\begin{remark}\label{remark:oscillation}
For simplicity of presentation, Theorems \ref{thm:Hcurl} and \ref{thm:curlcurl} assume that the right hand side function $f$ is a piecewise polynomial. In general, the data oscillation $\|f-P_{h,p}f\|$ should be added to those a posteriori error estimates derived in Theorems \ref{thm:Hcurl} and \ref{thm:curlcurl}. Here $P_{h,p}f$ is the $L^2$ projection of $f$ onto the broken piecewise polynomial space of degree $\leq p$. This modification preserves guaranteed a posteriori error upper bounds.
\end{remark}

\subsection{H(div) Elliptic Equation}\label{subsect:Hdiv} The same analysis in Section \ref{subsect:Hcurl} applies to the following H(div) elliptic problem
\begin{align*}
    -\nabla{\rm div}u+u&=f\quad\text{ in }\Omega\subset\mathbb{R}^3,\\
    u\cdot n&=0\quad\text{ on }\partial\Omega,
\end{align*}
discretized by the FEM: Find
$u_h\in V_h({\rm div})$ satisfying
\begin{equation}\label{eq:Hdiv_elliptic_FEM}
    ({\rm div}u_h,{\rm div}v_h)+(u_h,v_h)=(f,v_h),\quad v_h\in V_h({\rm div}). 
\end{equation}
Here  $V_h({\rm div})\subset H_0({\rm div},\Omega)$ is a RT or Brezzi--Douglas--Marini (BDM) finite element space. The residual of \eqref{eq:Hdiv_elliptic_FEM} is $r=f-u_h+\nabla{\rm div}u_h\in H_0({\rm div},\Omega)^*.$

The $H(\rm div)$ elliptic equation is not a hot topic in finite element analysis. Nevertheless, \cite{CasconNochettoSiebert2007} derived an explicit residual error estimator and established convergence of adaptive FEMs for \eqref{eq:Hdiv_elliptic_FEM}.
A posteriori error analysis for \eqref{eq:Hdiv_elliptic_FEM} relies on the next H(div) version of regular decomposition.
\begin{lemma}[Regular Decomposition in H(div)]\label{thm:regular_Hdiv}
For any $v\in H_0({\rm div},\Omega)$, there exist $\varphi\in H_0^1(\Omega,\mathbb{V})$ and $z\in H_0^1(\Omega,\mathbb{V}),$ such that 
\begin{align*}
v&={\rm curl}\varphi+z,\\
|\varphi|_{H^1(\Omega)}^2+|z|_{H^1(\Omega)}^2&\leq C_{\rm reg,d}\|v\|_{H({\rm div},\Omega)}^2,
\end{align*}
where the constant $C_{\rm reg,d}>0$ depends on the domain $\Omega$.
\end{lemma}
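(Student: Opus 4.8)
The plan is to prove the $H(\rm div)$ regular decomposition (Lemma \ref{thm:regular_Hdiv}) by exploiting the structural parallel with the already-stated $H(\rm curl)$ decomposition (Lemma \ref{thm:regular_Hcurl}). The key observation is that in three dimensions the operators ${\rm div}$ and ${\rm curl}$ are linked through the de Rham complex, and the identity \eqref{eq:div_mskw} together with the $\rm curl$-$\rm div$ duality suggests transporting a field in $H_0({\rm div},\Omega)$ into $H_0({\rm curl},\Omega)$ (or a matrix analogue thereof) where Lemma \ref{thm:regular_Hcurl} can be invoked. Concretely, given $v\in H_0({\rm div},\Omega)$, I would first seek a representation that decouples the divergence-carrying part from a regular remainder, mirroring how the curl decomposition split $v=\nabla\varphi+z$ into a gradient piece and an $H^1$ piece.

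First I would establish a stable lifting of the divergence: since ${\rm div}v\in L^2(\Omega)$ with $({\rm div}v,1)=0$ (because $v\cdot n=0$ on $\partial\Omega$), there exists $w\in H_0^1(\Omega,\mathbb{V})\cap\ker$-type space solving ${\rm div}\,z_0={\rm div}v$ with the a priori bound $|z_0|_{H^1(\Omega)}\lesssim\|{\rm div}v\|$; this is the standard right-inverse of the divergence (Bogovskii or the mixed-formulation regularity under the polygonal/polyhedral domain hypothesis). Then $v-z_0$ is divergence-free with vanishing normal trace, so by the exactness of the de Rham sequence with boundary conditions there is a potential $\varphi$ with $v-z_0={\rm curl}\varphi$. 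The remaining task is to regularize $\varphi$: one decomposes ${\rm curl}\varphi$ further using the $H(\rm curl)$ regular decomposition of Lemma \ref{thm:regular_Hcurl} applied to $v-z_0\in H_0({\rm curl},\Omega)$ (note $v-z_0$ lies in $H_0({\rm curl},\Omega)$ since it is a curl, hence its own curl is controlled), absorbing the gradient component $\nabla\psi={\rm curl}$ of something into the $H^1$ remainder via the identity ${\rm curl}\nabla=0$. I would collect the regular parts into the single field $z\in H_0^1(\Omega,\mathbb{V})$ and keep the vector potential $\varphi\in H_0^1(\Omega,\mathbb{V})$, tracking that every constant in the chain depends only on $\Omega$.

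The main obstacle will be the boundary conditions and the homogeneity of the potentials. The difficulty is that naive potentials produced by the Poincaré/exactness lemmas need not satisfy the required $H_0^1$ homogeneous trace, and the lifting $z_0$ of the divergence must simultaneously be in $H_0^1$ and carry the correct normal data so that the residual is genuinely divergence-free with vanishing normal trace. I would handle this by working with the boundary-conforming versions of the right-inverse operators (as in \cite{Hiptmair2002,HiptmairXu2007}) whose existence and $\Omega$-dependent bounds are exactly what the cited regular-decomposition theory supplies; the polyhedral Lipschitz hypothesis on $\Omega$ guarantees the relevant stable splittings. The final estimate $|\varphi|_{H^1(\Omega)}^2+|z|_{H^1(\Omega)}^2\le C_{\rm reg,d}\|v\|_{H({\rm div},\Omega)}^2$ then follows by combining the divergence-lifting bound $|z_0|_{H^1}\lesssim\|{\rm div}v\|$ with the $H(\rm curl)$ constant $C_{\rm reg,c}$ applied to $v-z_0$, whose $H(\rm curl)$ norm is controlled by $\|v\|+\|{\rm div}v\|\eqsim\|v\|_{H({\rm div},\Omega)}$. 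As this is the exact $H(\rm div)$ analogue of a standard and well-documented result, I expect the argument to be a streamlined citation-plus-duality proof rather than a lengthy construction.
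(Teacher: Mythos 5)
The paper offers no proof of Lemma \ref{thm:regular_Hdiv}: like its $H({\rm curl})$ counterpart, it is imported as a known regular decomposition from the auxiliary-space literature (\cite{Hiptmair2002,HiptmairXu2007}), so there is no in-paper argument to compare against, and your sketch must stand on its own. Your overall strategy --- lift the divergence by a Bogovskii-type right inverse $z_0\in H_0^1(\Omega,\mathbb{V})$ with ${\rm div}\,z_0={\rm div}\,v$ and $|z_0|_{H^1(\Omega)}\lesssim\|{\rm div}\,v\|$, then write the divergence-free remainder as a curl and regularize the potential --- is the standard route, but the central step is carried out incorrectly. You propose to ``apply Lemma \ref{thm:regular_Hcurl} to $v-z_0\in H_0({\rm curl},\Omega)$,'' justified by ``since it is a curl, hence its own curl is controlled.'' That is false: if $v-z_0={\rm curl}\,\psi$, then ${\rm curl}(v-z_0)={\rm curl}\,{\rm curl}\,\psi$ is merely a distribution, not an $L^2$ field, and moreover $v-z_0$ carries a vanishing \emph{normal} trace, not the tangential trace required for membership in $H_0({\rm curl},\Omega)$. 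The decomposition must instead be applied to a \emph{stable vector potential} $\varphi_0\in H_0({\rm curl},\Omega)$ satisfying ${\rm curl}\,\varphi_0=v-z_0$ and $\|\varphi_0\|_{H({\rm curl},\Omega)}\lesssim\|v-z_0\|$; then Lemma \ref{thm:regular_Hcurl} (equivalently, Lemma \ref{cor:regular_Hcurl}) yields $\varphi\in H_0^1(\Omega,\mathbb{V})$ with ${\rm curl}\,\varphi={\rm curl}\,\varphi_0=v-z_0$ and the right bound. Note also that applying Lemma \ref{thm:regular_Hcurl} directly to a field $w$ produces $w=\nabla\psi+z$, a gradient-plus-regular splitting, which is not of the form ${\rm curl}\,\varphi+z$ demanded here; the gradient part cannot simply be ``absorbed'' as you suggest.

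There is a second gap: the existence of \emph{any} potential, i.e.\ $v-z_0\in{\rm curl}\,H_0({\rm curl},\Omega)$, invokes exactness of the de Rham sequence with essential boundary conditions at $H_0({\rm div},\Omega)$, which fails when $\Omega$ is not simply connected --- harmonic Neumann fields are divergence-free with vanishing normal trace yet $L^2$-orthogonal to ${\rm curl}\,H_0({\rm curl},\Omega)$. Section \ref{sect:natural} assumes only that $\Omega$ is a Lipschitz polyhedron, and the cited regular decompositions hold in that generality precisely because their proofs (zero extension to a convex neighborhood, decomposition there, cut-off and correction) do not route through exactness on $\Omega$; the cohomological obstruction is absorbed into the regular term $z$, which is not required to be divergence-free. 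As written, your argument either needs the additional hypothesis that $\Omega$ is simply connected or must be replaced by the extension-and-localization construction.
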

Lemma \ref{thm:regular_Hdiv} suggests the fictitious space $\overline{V}=H_0^1(\Omega,\mathbb{V})\times H_0^1(\Omega,\mathbb{V})$ and the transfer operator $\Pi=({\rm curl},I_{\rm d}): \overline{V}\rightarrow V=H_0({\rm div},\Omega)$ given by
\[
\Pi(\varphi,z)={\rm curl}\varphi+z,\quad(\varphi,z)\in H_0^1(\Omega,\mathbb{V})\times H_0^1(\Omega,\mathbb{V}),
\]
where $I_{\rm d}: H_0^1(\Omega,\mathbb{V})\hookrightarrow H_0({\rm div},\Omega)$ is the inclusion.
The space $\overline{V}$ is  equipped with the vector $H^1$ inner product $\bar{A}={\rm diag}(-\bm{\Delta},-\bm{\Delta})$.
Clearly, $\Pi=({\rm curl},I_{\rm d})$ is a bounded operator. It follows from Lemma \ref{thm:regular_Hdiv} that $\overline{V}$, $\Pi$ satisfy the assumption (b) in Lemma \ref{lem:FSP} with $V=H_0({\rm div},\Omega)$, $A=-\nabla{\rm div}+\text{id}$. As a result,
\begin{align*}
    B_{\rm d}&=({\rm curl},I_{\rm d})\begin{pmatrix}
        (-\bm{\Delta})^{-1}&O\\
        O&(-\bm{\Delta})^{-1}
    \end{pmatrix}\begin{pmatrix}
        {\rm curl}^*\\I_{\rm d}^*
    \end{pmatrix}\\
    &={\rm curl}(-\bm{\Delta})^{-1}{\rm curl}^*+I_{\rm d}(-\bm{\Delta})^{-1}I_{\rm d}^*
\end{align*}
is spectrally equivalent to $-\nabla{\rm div}+\text{id}$, i.e.,
\begin{equation}\label{eq:residual_Hdiv}
\begin{aligned}
\|r\|^2_{(-\nabla{\rm div}+\text{id})^{-1}}&=\langle r,({-\nabla\rm div}+\text{id})^{-1}r\rangle\eqsim\langle r,B_{\rm d}r\rangle\\
&=\|{\rm curl}^*r\|^2_{(-\bm{\Delta})^{-1}}+\|I_{\rm d}^*r\|^2_{(-\bm{\Delta})^{-1}}.
\end{aligned}
\end{equation}
For $v\in H_0^1(\Omega,\mathbb{V})$,
direct calculation shows that 
\begin{align*}
\langle ({\rm curl}^*r)_i,v\rangle&=((f-u_h)\times\nabla\phi_i,v)+\langle{\rm div}\{{\rm mskw}((f-u_h)\phi_i)\},v\rangle,\\
\langle (I^*_{\rm d}r)_i,v\rangle&=((f-u_h)\phi_i-(\nabla\phi_i){\rm div}u_h,v)+\langle{\rm div}\iota(\phi_i{\rm div}u_h),v\rangle.
\end{align*}
Combining the previous localization with \eqref{eq:residual_Hdiv} and applying Lemma \ref{lem:equi_residualH1} to estimate $\|{\rm curl}^*r\|_{(-\bm{\Delta})^{-1}}=\|{\rm curl}^*r\|_{H^{-1}(\Omega,\mathbb{V})}$ and $\|I_{\rm d}^*r\|_{(-\bm{\Delta})^{-1}}=\|I_{\rm d}^*r\|_{H^{-1}(\Omega,\mathbb{V})}$, we obtain a $p$-robust a posteriori error estimate for the H(div) elliptic problem \eqref{eq:Hcurl_elliptic_FEM}. The proof is similar to Theorem \ref{thm:Hcurl} and is skipped.
\begin{theorem}\label{thm:Hdiv}
Assume $u_h\in V_h({\rm div})$ and $f$ are piecewise polynomials of degree $\leq p$ and $V_h(\mathbb{V})\subset V_h({\rm div})$. Let $\sigma_i\in\mathcal{RT}_p(\Omega_i,\mathbb{V})$ and $u_i\in\mathcal{P}^{\rm dG}_p(\Omega_i,\mathbb{V})$ solve
\begin{align*}
    (\sigma_i,\tau)_{\Omega_i}+({\rm div}\tau,u_i)_{\Omega_i}&=-({\rm mskw}((f-u_h)\phi_i),\tau)_{\Omega_i},\quad\tau\in\mathcal{RT}_p(\Omega_i,\mathbb{V}),\\
    ({\rm div}\sigma_i,v)_{\Omega_i}&=((f-u_h)\times\nabla\phi_i,v)_{\Omega_i},\quad v\in\mathcal{P}^{\rm dG}_p(\Omega_i,\mathbb{V}).
\end{align*}
Let $\tau_i\in\mathcal{RT}_{p+1}(\Omega_i,\mathbb{V})$ and $\tilde{u}_i\in\mathcal{P}^{\rm dG}_{p+1}(\Omega_i,\mathbb{V})$ solve
\begin{align*}
    (\tau_i,\tau)_{\Omega_i}+({\rm div}\tau,\tilde{u}_i)_{\Omega_i}&=-(\iota(\phi_i{\rm div}u_h),\tau)_{\Omega_i},\quad\tau\in\mathcal{RT}_{p+1}(\Omega_i,\mathbb{V}),\\
    ({\rm div}\tau_i,v)_{\Omega_i}&=((f-u_h)\phi_i-(\nabla\phi_i){\rm div}u_h,v)_{\Omega_i},\quad v\in\mathcal{P}^{\rm dG}_{p+1}(\Omega_i,\mathbb{V}).
\end{align*}
Then for \eqref{eq:Hdiv_elliptic_FEM} it holds that 
\begin{align*}
&\Big\|\sum_{i\in\mathcal{I}}\sigma_i+{\rm mskw}((f-u_h)\phi_i)\Big\|^2+\Big\|\sum_{i\in\mathcal{I}}\tau_i+\iota(\phi_i{\rm div}u_h)\Big\|^2\lesssim\|u-u_h\|^2_{H({\rm div},\Omega)}\\
&\leq C_{\rm reg,d}\Big(\Big\|\sum_{i\in\mathcal{I}}\sigma_i+{\rm mskw}((f-u_h)\phi_i)\Big\|^2+\Big\|\sum_{i\in\mathcal{I}}\tau_i+\iota(\phi_i{\rm div}u_h)\Big\|^2\Big).
\end{align*}
When $\Omega$ is convex, the constant $C_{\rm reg,d}=1$ and it holds that
    \begin{equation*}
\|u-u_h\|_{H({\rm div},\Omega)}^2\leq \Big\|\sum_{i\in\mathcal{I}}\sigma_i+{\rm mskw}((f-u_h)\phi_i)\Big\|^2+\Big\|\sum_{i\in\mathcal{I}}\tau_i+\iota(\phi_i{\rm div}u_h)\Big\|^2.
\end{equation*}
\end{theorem}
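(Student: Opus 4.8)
The plan is to transcribe the proof of Theorem \ref{thm:Hcurl} almost verbatim, replacing the H(curl) regular decomposition by its H(div) analogue (Lemma \ref{thm:regular_Hdiv}) and the two auxiliary residuals $\nabla^{*}r,\,I_{\rm c}^{*}r$ by ${\rm curl}^{*}r$ and $I_{\rm d}^{*}r$. Since the spectral equivalence \eqref{eq:residual_Hdiv} furnished by Lemma \ref{lem:FSP} is already in hand, the identity $\|u-u_h\|_{H({\rm div},\Omega)}^{2}=\|r\|_{(-\nabla{\rm div}+{\rm id})^{-1}}^{2}$ places the error (up to the factor $C_{\rm reg,d}$) between fixed multiples of $\|{\rm curl}^{*}r\|_{H^{-1}(\Omega,\mathbb{V})}^{2}+\|I_{\rm d}^{*}r\|_{H^{-1}(\Omega,\mathbb{V})}^{2}$. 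It therefore suffices to prove that each of these two $H^{-1}(\Omega,\mathbb{V})$ norms is equivalent, with $p$-independent constants, to the corresponding local equilibration quantity in the statement, and then to add the two squared estimates.

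For each auxiliary residual I would first record its vertex localization, namely the two displayed formulas for $\langle({\rm curl}^{*}r)_i,v\rangle$ and $\langle(I_{\rm d}^{*}r)_i,v\rangle$ preceding the theorem. These follow by inserting $r=f-u_h+\nabla{\rm div}u_h$, using ${\rm div}\,{\rm curl}=0$ for the curl part, integrating by parts, and rewriting the element/jump data through ${\rm div}({\rm mskw}(w))={\rm curl}w$ (see \eqref{eq:div_mskw}), ${\rm div}\iota(w)=\nabla w$, and the triple-product identity $a\cdot(b\times c)=c\cdot(a\times b)$. The next step is to recognise each local mixed system as a disguised instance of Problem \ref{prob:min_sigmaRT} for the matrix-valued residual $R={\rm curl}^{*}r$, respectively $R=I_{\rm d}^{*}r$: the substitution $\tau\mapsto\tau-{\rm mskw}((f-u_h)\phi_i)$, respectively $\tau\mapsto\tau-\iota(\phi_i{\rm div}u_h)$, cancels the distributional-divergence term in the localization, so that the constraint ${\rm div}\tau=R_i$ collapses to the pure $L^{2}$ load on the right-hand side of the stated divergence equations, while the objective $\|\tau\|$ becomes $\|\sigma_i+{\rm mskw}((f-u_h)\phi_i)\|$, respectively $\|\tau_i+\iota(\phi_i{\rm div}u_h)\|$. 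Well-posedness of the local saddle points follows for boundary patches from inf-sup stability of $\mathcal{RT}\times\mathcal{P}^{\rm dG}$, and for interior patches from the two compatibility conditions $((f-u_h)\times\nabla\phi_i,e_j)_{\Omega_i}=0$ and $((f-u_h)\phi_i-(\nabla\phi_i){\rm div}u_h,e_j)_{\Omega_i}=0$; these coincide with the hypotheses of Lemma \ref{lem:equi_residualH1} and are established in the next step.

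To apply Lemma \ref{lem:equi_residualH1} I must verify $\langle R,\phi_ie_j\rangle=0$ at every interior vertex for both residuals. For $R=I_{\rm d}^{*}r$ this is immediate, since $\phi_ie_j\in V_h(\mathbb{V})\subset V_h({\rm div})$ and the Galerkin orthogonality of \eqref{eq:Hdiv_elliptic_FEM} annihilates $\langle r,\cdot\rangle$ on $V_h({\rm div})$. For $R={\rm curl}^{*}r$ the correct test object is ${\rm curl}(\phi_ie_j)=\nabla\phi_i\times e_j$, a piecewise-constant field with continuous normal trace, hence a member of the lowest-order Raviart--Thomas subspace of $V_h({\rm div})$, so that $\langle{\rm curl}^{*}r,\phi_ie_j\rangle=\langle r,{\rm curl}(\phi_ie_j)\rangle=0$ again by Galerkin orthogonality. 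With the hypotheses in place, Lemma \ref{lem:equi_residualH1} yields the two equivalences $\big\|\sum_i\sigma_i+{\rm mskw}((f-u_h)\phi_i)\big\|\eqsim\|{\rm curl}^{*}r\|_{H^{-1}(\Omega,\mathbb{V})}$ and $\big\|\sum_i\tau_i+\iota(\phi_i{\rm div}u_h)\big\|\eqsim\|I_{\rm d}^{*}r\|_{H^{-1}(\Omega,\mathbb{V})}$; squaring, adding, and combining with \eqref{eq:residual_Hdiv} gives the two-sided bound. The convex-domain claim $C_{\rm reg,d}=1$ is then argued exactly as in Corollary \ref{cor:Hcurl}, by taking the $L^{2}$-orthogonal Helmholtz decomposition $v=\nabla q+{\rm curl}\psi$ of $v\in H_0({\rm div},\Omega)$ (so ${\rm div}v=\Delta q$ and $\|v\|^{2}=\|\nabla q\|^{2}+\|{\rm curl}\psi\|^{2}$) and invoking the convex-domain Girault--Raviart regularity estimate to bound the $H^{1}$ seminorms of the regular parts, which forces the regular-decomposition constant down to $1$.

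The only genuinely delicate point I anticipate is the orthogonality check for the curl residual: unlike the $I_{\rm d}$ part it is \emph{not} covered by the stated hypothesis $V_h(\mathbb{V})\subset V_h({\rm div})$, and instead depends on ${\rm curl}(\phi_ie_j)$ belonging to $V_h({\rm div})$, i.e. on the discrete de Rham compatibility of the chosen $H({\rm div})$ space with the lowest-order RT fields. A secondary nuisance is keeping track of the signs and transposes in the vector/matrix calculus of the localizations (the placement of ${\rm mskw}$, $\iota$, and the cross product) so that the changes of variables cancel the distributional-divergence terms exactly; once those are pinned down the argument is mechanical.
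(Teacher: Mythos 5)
Your proposal is correct and follows exactly the route the paper intends: the paper itself skips this proof with the remark that it is "similar to Theorem \ref{thm:Hcurl}", relying on Lemma \ref{thm:regular_Hdiv}, the spectral equivalence \eqref{eq:residual_Hdiv}, the displayed localizations of $({\rm curl}^*r)_i$ and $(I_{\rm d}^*r)_i$, the change of variables into Problem \ref{prob:min_sigmaRT}, and Lemma \ref{lem:equi_residualH1}, with the convexity claim argued as in Corollary \ref{cor:Hcurl}. You also correctly identify and resolve the one point that is not a literal transcription, namely that the compatibility of the curl residual at interior vertices rests on ${\rm curl}(\phi_i e_j)\in\mathcal{RT}_0\subset V_h({\rm div})$ rather than on the hypothesis $V_h(\mathbb{V})\subset V_h({\rm div})$.
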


\subsection{Hodge--Laplace equation} The analysis in Sections \ref{subsect:Hcurl} and \ref{subsect:Hdiv} directly applies to indefinite problems. For instance, we consider the Hodge--Laplace equation
\begin{align*}
{\rm curlcurl}u-\nabla{\rm div}u&=f\quad\text{in }\Omega,\\
    u\cdot n=0,\quad n\times{\rm curl}u&=0\quad\text{ on }\partial\Omega,
\end{align*}
and its variational formulation: $(\sigma,u)\in H_0({\rm curl},\Omega)\times H_0({\rm div},\Omega)$ such that
\begin{subequations}\label{eq:var_HL}
    \begin{align}
    (\sigma,\tau)-({\rm curl}\tau,u)&=0,\quad\tau\in H_0({\rm curl},\Omega),\\
    ({\rm curl}\sigma,v)+({\rm div}u,{\rm div}v)&=(f,v),\quad v\in H_0({\rm div},\Omega).
\end{align}
\end{subequations}
The corresponding FEM seeks $(\sigma_h,u_h)\in V_h({\rm curl})\times V_h({\rm div})$ such that 
\begin{subequations}\label{eq:FEM_HL}
    \begin{align}
    (\sigma_h,\tau)-({\rm curl}\tau,u_h)&=0,\quad\tau\in V_h({\rm curl}),\\
    ({\rm curl}\sigma_h,v)+({\rm div}u_h,{\rm div}v)&=(f,v),\quad v\in V_h({\rm div}).
\end{align}
\end{subequations}

The Hodge--Laplacian and its finite element discretizations are the model problem in finite element exterior calculus (cf.~\cite{ArnoldFalkWinther2006,Arnold2018}). The well-posedness of \eqref{eq:FEM_HL} is confirmed in \cite{ArnoldFalkWinther2006} provided $V_h({\rm curl})$ and $V_h({\rm div})$ are of equal polynomial degree or the degree of $V_h({\rm curl})$ is one order higher than $V_h({\rm div})$.
\begin{corollary}\label{cor:Hodge_Laplace}
Assume $\sigma_h\in V_h({\rm curl})$ is a piecewise polynomial of degree $\leq p+1$, $u_h\in V_h({\rm div})$ and $f$ are piecewise polynomials of degree $\leq p$ in \eqref{eq:FEM_HL}. In addition, assume $V_h(\mathbb{V})\subset V_h({\rm curl})$ and $V_h(\mathbb{V})\subset V_h({\rm div})$. Let $\sigma_i^1\in\mathcal{RT}_{p+1}(\Omega_i)$ and $u_i^1\in\mathcal{P}^{\rm dG}_{p+1}(\Omega_i)$ solve
\begin{align*}
    (\sigma_i^1,\tau)_{\Omega_i}+({\rm div}\tau,u^1_i)_{\Omega_i}&=-(\phi_i\sigma_h,\tau)_{\Omega_i},\quad\tau\in\mathcal{RT}_{p+1}(\Omega_i),\\
    ({\rm div}\sigma^1_i,v)_{\Omega_i}&=(\sigma_h\cdot\nabla\phi_i,v)_{\Omega_i},\quad v\in\mathcal{P}^{\rm dG}_{p+1}(\Omega_i).
\end{align*}
Let $\sigma_i^2\in\mathcal{RT}_{p+2}(\Omega_i,\mathbb{V})$ and $u_i^2\in\mathcal{P}^{\rm dG}_{p+2}(\Omega_i,\mathbb{V})$ solve
\begin{align*}
    (\sigma_i^2,\tau)_{\Omega_i}+({\rm div}\tau,u^2_i)_{\Omega_i}&=({\rm mskw}(\phi_iu_h),\tau)_{\Omega_i},\quad\tau\in\mathcal{RT}_{p+2}(\Omega_i,\mathbb{V}),\\
    ({\rm div}\sigma_i^2,v)_{\Omega_i}&=(\phi_i\sigma_h-u_h\times\nabla\phi_i,v)_{\Omega_i},\quad v\in\mathcal{P}^{\rm dG}_{p+2}(\Omega_i,\mathbb{V}).
\end{align*} 
Let $\sigma_i^3\in\mathcal{RT}_p(\Omega^h_i,\mathbb{V})$ and $u_i^3\in\mathcal{P}^{\rm dG}_p(\Omega_i,\mathbb{V})$ solve
\begin{align*}
    (\sigma_i^3,\tau)_{\Omega_i}+({\rm div}\tau,u^3_i)_{\Omega_i}&=-({\rm mskw}\{(f-{\rm curl}\sigma_h)\phi_i\},\tau)_{\Omega_i},\quad\tau\in\mathcal{RT}_p(\Omega_i,\mathbb{V}),\\
    ({\rm div}\sigma^3_i,v)_{\Omega_i}&=((f-{\rm curl}\sigma_h)\times\nabla\phi_i,v)_{\Omega_i},\quad v\in\mathcal{P}^{\rm dG}_p(\Omega_i,\mathbb{V}).
\end{align*}
Let $\sigma^4_i\in\mathcal{RT}_{p+1}(\Omega_i,\mathbb{V})$ and $u^4_i\in\mathcal{P}^{\rm dG}_{p+1}(\Omega_i,\mathbb{V})$ solve
\begin{align*}
    (\sigma^4_i,\tau)_{\Omega_i}+({\rm div}\tau,u^4_i)_{\Omega_i}&=-(\iota(\phi_i{\rm div}u_h),\tau)_{\Omega_i},\quad\tau\in\mathcal{RT}_{p+1}(\Omega_i,\mathbb{V}),\\
    ({\rm div}\sigma^4_i,v)_{\Omega_i}&=((f-{\rm curl}\sigma_h)\phi_i-({\rm div} u_h)\nabla\phi_i,v)_{\Omega_i},\quad v\in\mathcal{P}^{\rm dG}_{p+1}(\Omega_i,\mathbb{V}).
\end{align*}
Then for \eqref{eq:FEM_HL} it holds that 
\begin{align*}
&\|\sigma-\sigma_h\|_{H({\rm curl},\Omega)}+\|u-u_h\|_{H({\rm div},\Omega)}\\
&\eqsim\Big\|\sum_{i\in\mathcal{I}}\sigma_i^1+\phi_i\sigma_h\Big\|+\Big\|\sum_{i\in\mathcal{I}}\sigma_i^2-{\rm mskw}(\phi_iu_h)\Big\|\\
&+\Big\|\sum_{i\in\mathcal{I}}\sigma_i^3+{\rm mskw}\{(f-{\rm curl}\sigma_h)\phi_i\}\Big\|+\Big\|\sum_{i\in\mathcal{I}}\sigma_i^4+\iota(\phi_i{\rm div}u_h)\Big\|.
\end{align*}
\end{corollary}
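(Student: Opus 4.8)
The plan is to regard the two discrete equations in \eqref{eq:FEM_HL} as defining two residuals and to apply the auxiliary-space reduction of Theorems~\ref{thm:Hcurl} and \ref{thm:Hdiv} to each of them; the only genuinely new ingredient is an error--residual equivalence for the \emph{indefinite} mixed form. Introduce the residuals $R_1\in H_0({\rm curl},\Omega)^*$ and $R_2\in H_0({\rm div},\Omega)^*$ by
\begin{align*}
\langle R_1,\tau\rangle&=(\sigma_h,\tau)-({\rm curl}\tau,u_h),\\
\langle R_2,v\rangle&=(f,v)-({\rm curl}\sigma_h,v)-({\rm div}u_h,{\rm div}v).
\end{align*}
They encode the failure of $(\sigma_h,u_h)$ to solve \eqref{eq:var_HL}, and the error $(\sigma-\sigma_h,u-u_h)$ is, up to the sign of the skew off-diagonal block, the preimage of $(R_1,R_2)$ under the operator $\mathcal{L}\colon V:=H_0({\rm curl},\Omega)\times H_0({\rm div},\Omega)\to V^*$ of \eqref{eq:var_HL}. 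Because \eqref{eq:var_HL} is a well-posed Hodge--Laplace system on $V$ in the sense of finite element exterior calculus, $\mathcal{L}$ is an isomorphism, so $\|(\sigma-\sigma_h,u-u_h)\|_V\eqsim\|(R_1,R_2)\|_{V^*}$. Since the norm on $V$ is a product norm, $\|(R_1,R_2)\|_{V^*}\eqsim\|R_1\|_{H_0({\rm curl},\Omega)^*}+\|R_2\|_{H_0({\rm div},\Omega)^*}$, which reduces the task to bounding the two dual residual norms.

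Next I would reduce each dual norm to $H^{-1}$ residuals exactly as in Sections~\ref{subsect:Hcurl} and \ref{subsect:Hdiv}. Applying the spectral equivalence \eqref{eq:rcurl} induced by $B_{\rm c}$ in \eqref{eq:Bc} to $R_1$, and the equivalence \eqref{eq:residual_Hdiv} induced by $B_{\rm d}$ to $R_2$, yields
\begin{align*}
\|R_1\|_{H_0({\rm curl},\Omega)^*}&\eqsim\|\nabla^*R_1\|_{H^{-1}(\Omega)}+\|I_{\rm c}^*R_1\|_{H^{-1}(\Omega,\mathbb{V})},\\
\|R_2\|_{H_0({\rm div},\Omega)^*}&\eqsim\|{\rm curl}^*R_2\|_{H^{-1}(\Omega,\mathbb{V})}+\|I_{\rm d}^*R_2\|_{H^{-1}(\Omega,\mathbb{V})}.
\end{align*}
This produces exactly four $H^{-1}$ residuals, one scalar-valued and three vector-valued, matching the scalar problem for $\sigma_i^1$ and the three matrix-valued problems for $\sigma_i^2,\sigma_i^3,\sigma_i^4$.

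The third step localizes each of the four residuals and matches it to the corresponding local mixed problem, using the same integration by parts and algebraic identities as in the proofs of Theorems~\ref{thm:Hcurl} and \ref{thm:Hdiv}. For $\nabla^*R_1$ one uses ${\rm curl}\nabla\phi_i=0$ to leave only the gradient term; for $I_{\rm c}^*R_1$ and ${\rm curl}^*R_2$ one expands ${\rm curl}(\phi_iv)=\phi_i{\rm curl}v+\nabla\phi_i\times v$, applies the scalar triple product, and converts the remaining term $(\phi_iw,{\rm curl}v)$ into $\langle{\rm div}({\rm mskw}(\phi_iw)),v\rangle$ via \eqref{eq:div_mskw}; for $I_{\rm d}^*R_2$ one rewrites $-(\phi_i{\rm div}u_h,{\rm div}v)=\langle{\rm div}\iota(\phi_i{\rm div}u_h),v\rangle$ using $\nabla w={\rm div}\iota(w)$. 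Up to the overall sign of each residual, which is immaterial inside the $L^2$ norms, the resulting volume data and flux corrections are precisely those appearing in the systems defining $\sigma_i^1$, $\sigma_i^2$, $\sigma_i^3$, $\sigma_i^4$. The interior-vertex compatibility hypotheses of Lemma~\ref{lem:equi_residualH1}, namely $\langle\nabla^*R_1,\phi_i\rangle=\langle I_{\rm c}^*R_1,\phi_ie_j\rangle=\langle{\rm curl}^*R_2,\phi_ie_j\rangle=\langle I_{\rm d}^*R_2,\phi_ie_j\rangle=0$, follow by testing \eqref{eq:FEM_HL} against $\nabla\phi_i$, $\phi_ie_j$, ${\rm curl}(\phi_ie_j)$ and $\phi_ie_j$, all of which lie in $V_h({\rm curl})$ or $V_h({\rm div})$ thanks to $\nabla V_h\subset V_h({\rm curl})$, $V_h(\mathbb{V})\subset V_h({\rm curl})$, ${\rm curl}V_h({\rm curl})\subset V_h({\rm div})$ and $V_h(\mathbb{V})\subset V_h({\rm div})$; the same inclusions supply the piecewise-polynomial representations required by the lemma.

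Finally I would invoke Lemma~\ref{lem:equi_residualH1} on each of the four $H^{-1}$ residuals to obtain $\|\nabla^*R_1\|_{H^{-1}(\Omega)}\eqsim\|\sum_{i\in\mathcal{I}}\sigma_i^1+\phi_i\sigma_h\|$ together with the three analogous equivalences, and then chain the estimates of the three previous steps to reach the asserted two-sided bound. The delicate point is the first step: in contrast to the coercive problems of Theorems~\ref{thm:Hcurl}--\ref{thm:Hdiv}, where the error equals a single dual residual norm through an exact Riesz identity, the saddle-point structure here yields only an \emph{equivalence}, whose hidden constants are those of the continuous inf--sup stability of \eqref{eq:var_HL}; securing that well-posedness on $V$ is what lets the pair of dual residual norms both control and be controlled by the mixed error.
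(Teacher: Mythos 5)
Your proposal is correct and follows essentially the same route as the paper: the error--residual equivalence from the continuous inf-sup stability of \eqref{eq:var_HL}, the reduction of the two dual residual norms to four $H^{-1}$ residuals via the spectral equivalences \eqref{eq:rcurl} and \eqref{eq:residual_Hdiv}, and then localization plus Lemma \ref{lem:equi_residualH1} exactly as in Theorems \ref{thm:Hcurl} and \ref{thm:Hdiv}. Your explicit verification of the interior-vertex compatibility conditions (testing against $\nabla\phi_i$, $\phi_i e_j$ and ${\rm curl}(\phi_i e_j)$) is a detail the paper leaves implicit but is consistent with its argument.
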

\begin{proof}
Let $r_\sigma=\sigma_h-{\rm curl}u_h\in H_0({\rm curl},\Omega)^*$, $r_u=f+\nabla{\rm div}u_h-{\rm curl}\sigma_h\in H_0({\rm div},\Omega)^*$.
The continuous inf-sup stability of \eqref{eq:var_HL} implies 
\begin{equation}
\begin{aligned}
&\|\sigma-\sigma_h\|_{H({\rm curl},\Omega)}+\|u-u_h\|_{H({\rm div},\Omega)}\\
&\eqsim\|r_\sigma\|^2_{({\rm curlcurl}+{\rm id})^{-1}}+\|r_u\|^2_{(-\nabla{\rm div}+{\rm id})^{-1}}.
\end{aligned}
\end{equation}
It then follows from \eqref{eq:residual_Hdiv} and \eqref{eq:rcurl} that
\begin{align*}
&\|\sigma-\sigma_h\|_{H({\rm curl},\Omega)}+\|u-u_h\|_{H({\rm div},\Omega)}\\
&\eqsim\|\nabla^*r_\sigma\|_{(-\Delta)^{-1}}+\|I_{\rm c}^*r_\sigma\|_{(-\bm{\Delta})^{-1}}+\|{\rm curl}^*r_u\|_{(-\bm{\Delta})^{-1}}+\|I_{\rm d}^*r_u\|_{(-\bm{\Delta})^{-1}}.
\end{align*}
The rest of the proof is a combination of the proofs of Theorems \ref{thm:Hcurl} and \ref{thm:Hdiv}.
\end{proof}
For the Hodge--Laplace equation,  explicit residual error estimators were derived in \cite{DemlowHirani2014,Li2019SINUM} and implicit sub-domain residual error estimators were derived in \cite{LiZikatanov2025mcom}. Corollary \ref{cor:Hodge_Laplace} provides the first $p$-robust a posteriori error estimate of  mixed FEMs for the Hodge--Laplacian.

\section{A Posteriori Error Estimates of Stress Variables}\label{sect:stress}
The previous section was concerned with a posteriori error control in terms of natural norms under which FEMs are inf-sup stable. In this section, we employ the auxiliary space approach to derive $p$-robust and guaranteed a posteriori error estimates for controlling the $L^2$ norm errors in the stress variable of several mixed FEMs. For simplicity, we assume $\Omega\subset\mathbb{R}^2$ is a two-dimensional polygon. 

\subsection{Mixed FEM for the Laplacian}\label{subsect:mixPoisson} To illustrate the underlying idea, first we consider the Poisson equation in mixed form:
\begin{align*}
\sigma&=\nabla u\quad\text{ in }\Omega,\\
{\rm div}\sigma&=-f\quad\text{ in }\Omega,\\
    \sigma\cdot n&=0\quad\text{ on }\partial\Omega.
\end{align*}
Let $V_h({\rm div})\subset H_0(\rm div,\Omega)$ be the space of RT or BDM finite element space. Let $U_h$ is the space of broken piecewise polynomials subject to zero mean value constraint.
The corresponding FEM seeks $\sigma_h\in V_h({\rm div})$ and $u_h\in U_h$ such that 
\begin{subequations}\label{def:MFEM_Poisson}
    \begin{align}
    (\sigma_h,\tau_h)+({\rm div}\tau_h,u_h)&=0,\quad\tau\in V_h({\rm div}),\label{def:MFEM_Poisson1}\\
    ({\rm div}\sigma_h,v_h)&=-(f,v_h),\quad v_h\in U_h.
\end{align}
\end{subequations}
Here $V_h(\rm div)$ and $U_h$ are paired such that \eqref{def:MFEM_Poisson} fulfills a discrete inf-sup condition. Analysis in this section relies on the following well-known Helmholtz decomposition as well as its tensor-valued version.
\begin{lemma}[Helmholtz Decomposition of Vector Fields]\label{lem:Helmholtz_vector}
Let $\Omega\subset\mathbb{R}^2$  be a simply-connected Lipschitz domain. For each $\tau\in L^2(\Omega,\mathbb{V})$, there exist $\varphi\in H_0^1(\Omega)$ and $\psi\in\widetilde{H}^1(\Omega)$ such that 
\begin{align*}
\tau={\rm curl}\varphi\oplus\nabla\psi,
\end{align*}
where ${\rm curl}\varphi$ and $\nabla \psi$ are orthogonal with respect to $(\bullet,\bullet)$.
\end{lemma}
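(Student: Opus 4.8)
The plan is to peel off the gradient component by a variational projection and then reconstruct the ${\rm curl}$ component as a stream function, with simple connectedness entering precisely at the reconstruction step. First I would record that $(\nabla\bullet,\nabla\bullet)$ is an inner product on $\widetilde{H}^1(\Omega)$, since the Poincar\'e--Wirtinger inequality makes it coercive on the zero-mean subspace; thus $\widetilde{H}^1(\Omega)$ is a Hilbert space. As $v\mapsto(\tau,\nabla v)$ is a bounded linear functional on this space, the Riesz representation theorem yields a unique $\psi\in\widetilde{H}^1(\Omega)$ with
\[
(\nabla\psi,\nabla v)=(\tau,\nabla v)\qquad\forall\,v\in\widetilde{H}^1(\Omega),
\]
and, since constants are annihilated by $\nabla$, this identity persists for every $v\in H^1(\Omega)$.

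Next I would set $\eta:=\tau-\nabla\psi\in L^2(\Omega,\mathbb{V})$, so that by construction $(\eta,\nabla v)=0$ for all $v\in H^1(\Omega)$. Testing against $v\in C_c^\infty(\Omega)$ shows ${\rm div}\,\eta=0$ in the distributional sense, whence $\eta\in H({\rm div},\Omega)$; testing then against arbitrary $v\in H^1(\Omega)$ and integrating by parts leaves only the boundary pairing, forcing $\eta\cdot n=0$ on $\partial\Omega$. Thus $\eta$ is a divergence-free field with vanishing normal trace, and the remaining task is to write it as ${\rm curl}\varphi$ with $\varphi\in H_0^1(\Omega)$.

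This stream-function reconstruction is the crux of the argument and is where I expect the main obstacle to lie. Rather than integrate $\eta$ pointwise, I would invoke the exactness of the two-dimensional $L^2$ de Rham complex on a simply-connected Lipschitz domain, which identifies the space of weakly divergence-free fields with vanishing normal trace exactly with ${\rm curl}\big(H_0^1(\Omega)\big)$ (the Poincar\'e lemma in its functional form; see, e.g., \cite{GiraultRaviart1986}). The scalar $\varphi$ is the stream function: the condition ${\rm div}\,\eta=0$ is what makes $\eta$ a ${\rm curl}$, while the normal-trace condition $\eta\cdot n=0$ translates through ${\rm curl}\varphi\cdot n=\partial_t\varphi$ into $\varphi$ being constant along each boundary component. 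Simple connectedness guarantees $\partial\Omega$ is connected, so $\varphi$ is a single constant on the boundary and may be normalized to lie in $H_0^1(\Omega)$; care is needed that $\varphi$ is only $H^1$-regular, so the statement must be read in the trace sense rather than pointwise.

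Finally, orthogonality comes for free. Choosing $v=\psi$ in the defining relation for $\eta$ gives $({\rm curl}\varphi,\nabla\psi)=(\eta,\nabla\psi)=0$, so the two summands are $L^2$-orthogonal. In fact this is structural: for every $\varphi\in H_0^1(\Omega)$ and $\psi\in H^1(\Omega)$, an integration by parts (justified for $\psi\in H^1$ by density of smooth functions and continuity of the $L^2$ pairing) gives
\[
({\rm curl}\varphi,\nabla\psi)=\int_\Omega\big(\partial_{x_2}\varphi\,\partial_{x_1}\psi-\partial_{x_1}\varphi\,\partial_{x_2}\psi\big)\,dx=\int_\Omega\varphi\,\big(\partial_{x_1}\partial_{x_2}\psi-\partial_{x_2}\partial_{x_1}\psi\big)\,dx=0,
\]
the boundary terms vanishing because $\varphi\in H_0^1(\Omega)$. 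Hence ${\rm curl}\big(H_0^1(\Omega)\big)\perp\nabla\big(\widetilde{H}^1(\Omega)\big)$ and $\tau={\rm curl}\varphi\oplus\nabla\psi$ is the asserted orthogonal decomposition.
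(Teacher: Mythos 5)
The paper does not prove this lemma; it is invoked as a well-known Helmholtz decomposition, with references supplied only for the tensor-valued analogue (Lemma on tensor fields, citing Beir\~ao da Veiga--Niiranen--Stenberg and Arnold--Hu). Your argument is the standard proof and is correct: the variational (Riesz) definition of $\psi\in\widetilde{H}^1(\Omega)$, the identification of the divergence-free remainder with vanishing normal trace as ${\rm curl}\big(H_0^1(\Omega)\big)$ via a stream function on a simply-connected domain, and the $L^2$-orthogonality are all in order. The one point that genuinely needs care is the one you flag yourself---connectedness of $\partial\Omega$ (guaranteed here by simple connectedness) is what allows the stream function, constant on the boundary because $\partial_t\varphi=\eta\cdot n=0$, to be normalized into $H_0^1(\Omega)$---and your treatment of it is correct.
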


Now we are in a position to present a new equilibrated residual error estimator for \eqref{def:MFEM_Poisson} based on the auxiliary space approach. 
\begin{theorem}\label{thm:mixPoisson}
Assume $\Omega$ is a simply-connected polygonal domain. Let $\sigma_h\in V_h({\rm div})$ be a piecewise polynomial of degree $\leq p$. For each $i\in\mathcal{I}$, let $\sigma_i\in\mathcal{RT}_p(\Omega^h_i)$ and $u_i\in\mathcal{P}^{\rm dG}_p(\Omega_i)$ solve
\begin{equation}\label{eq:local_MFEM}
\begin{aligned}
    (\sigma_i,\tau)_{\Omega_i}+({\rm div}\tau,u_i)_{\Omega_i}&=-(\sigma_h^\perp\phi_i,\tau)_{\Omega_i},\quad\tau\in\mathcal{RT}_p(\Omega_i),\\
    ({\rm div}\sigma_i,v)_{\Omega_i}&=(\sigma_h\cdot{\rm curl}\phi_i,v)_{\Omega_i},\quad v\in\mathcal{P}^{\rm dG}_p(\Omega_i).
\end{aligned}
\end{equation}
Then for the mixed method \eqref{def:MFEM_Poisson} it holds that 
\begin{equation*}
\|\sigma-\sigma_h\|^2\leq \Big\|\sum_{i\in\mathcal{I}}\sigma_i+\sigma_h^\perp\phi_i\Big\|^2+\|\gamma h(f-f_h)\|^2.
\end{equation*}
where  $f_h$ is the $L^2$ projection of $f$ onto $U_h$, and  $\gamma$ is a piecewise constant with $\gamma_T=h_T^{-1}\max_{v\neq0}\|v-v_h\|_{L^2(T)}/|v|_{H^1(T)}$ for $T\in\mathcal{T}_h$. In addition,
\begin{equation*}
\Big\|\sum_{i\in\mathcal{I}}\sigma_i+\sigma_h^\perp\phi_i\Big\|\lesssim\|\sigma-\sigma_h\|.
\end{equation*}
\end{theorem}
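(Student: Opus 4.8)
The plan is to reduce $\|\sigma-\sigma_h\|$ to two decoupled pieces via the Helmholtz decomposition of Lemma \ref{lem:Helmholtz_vector} and then treat each piece with the tools already assembled. Writing $\sigma-\sigma_h={\rm curl}\alpha\oplus\nabla\beta$ with $\alpha\in H_0^1(\Omega)$ and $\beta\in\widetilde{H}^1(\Omega)$, the $L^2$-orthogonality gives $\|\sigma-\sigma_h\|^2=\|{\rm curl}\alpha\|^2+\|\nabla\beta\|^2$. The guaranteed upper bound will then follow by controlling $\|{\rm curl}\alpha\|$ by the equilibrated estimator (with constant $1$, from the reliability inequality $\|R\|_{H^{-1}(\Omega)}\le\|\sum_i\sigma_i^R\|$ in Lemma \ref{lem:equi_residualH1}) and $\|\nabla\beta\|$ by the data oscillation, whereas the efficiency estimate will follow from the $\lesssim$ inequality in Lemma \ref{lem:equi_residualH1} together with $\|{\rm curl}\alpha\|\le\|\sigma-\sigma_h\|$.

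For the gradient piece I would first note that $\|\nabla\beta\|^2=(\sigma-\sigma_h,\nabla\beta)$. Integrating by parts and using $\sigma\cdot n=\sigma_h\cdot n=0$ on $\partial\Omega$ together with ${\rm div}\sigma=-f$ gives $(\sigma-\sigma_h,\nabla\beta)=(f+{\rm div}\sigma_h,\beta)=(f-f_h,\beta)$, since the discrete divergence equation forces ${\rm div}\sigma_h=-f_h$. As $f-f_h\perp U_h$, I replace $\beta$ by $\beta-\beta_h$ with $\beta_h\in U_h$ the elementwise $L^2$-projection that defines $\gamma_T$; the local estimate $\|\beta-\beta_h\|_{L^2(T)}\le\gamma_Th_T|\beta|_{H^1(T)}$ followed by Cauchy--Schwarz then yields $\|\nabla\beta\|\le\|\gamma h(f-f_h)\|$.

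For the curl piece I would introduce the residual $R\in H^{-1}(\Omega)$ by $\langle R,\varphi\rangle=(\sigma-\sigma_h,{\rm curl}\varphi)=-(\sigma_h,{\rm curl}\varphi)$, where $(\sigma,{\rm curl}\varphi)=0$ because $\sigma=\nabla u$ is irrotational (equivalently, by Helmholtz orthogonality). Since $(\nabla\beta,{\rm curl}\varphi)=0$, taking $\varphi=\alpha$ shows $\|{\rm curl}\alpha\|=\|R\|_{H^{-1}(\Omega)}$. Using ${\rm curl}v=(\nabla v)^\perp$, the identity $a\cdot b^\perp=-a^\perp\cdot b$ and \eqref{eq:div_perp}, I localize exactly as in the proof of Theorem \ref{thm:Hcurl} to obtain $\langle R_i,v\rangle=-\langle{\rm div}(\phi_i\sigma_h^\perp),v\rangle-(\sigma_h\cdot{\rm curl}\phi_i,v)$. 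A change of variables then identifies $\hat\sigma_i:=\sigma_i+\phi_i\sigma_h^\perp$, where $\sigma_i$ solves \eqref{eq:local_MFEM}, as the minimal-norm solution of Problem \ref{prob:min_sigmaRT} for $-R$ (the sign is immaterial for the $L^2$ norm), so that ${\rm div}\hat\sigma_i=-R_i$ in $H^{-1}(\Omega)$. The compatibility hypothesis $\langle R,\phi_i\rangle=0$ at each interior vertex is verified by testing \eqref{def:MFEM_Poisson1} with ${\rm curl}\phi_i$, which lies in $V_h({\rm div})\cap H_0({\rm div},\Omega)$ and is divergence-free, giving $(\sigma_h,{\rm curl}\phi_i)=0$. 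Lemma \ref{lem:equi_residualH1} then delivers $\|R\|_{H^{-1}(\Omega)}\le\big\|\sum_i\hat\sigma_i\big\|\lesssim\|R\|_{H^{-1}(\Omega)}$, and assembling the three estimates yields both the guaranteed upper bound and the efficiency bound.

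The main obstacle is the bookkeeping in the curl piece: getting the rotations, signs and distributional jump terms exactly right so that \eqref{eq:local_MFEM} is literally Problem \ref{prob:min_sigmaRT} for $R$ after the shift by $\phi_i\sigma_h^\perp$, and confirming that ${\rm div}\hat\sigma_i$ equals $\pm R_i$ as an element of $H^{-1}(\Omega)$. The latter relies on the conforming normal-trace condition built into $\mathcal{RT}_p(\Omega_i)$ and on the elementwise identity ${\rm div}\sigma_i=\sigma_h\cdot{\rm curl}\phi_i$, where the interior-vertex compatibility is precisely what rules out a spurious additive constant coming from the zero-mean constraint in $\mathcal{P}^{\rm dG}_p(\Omega_i)$. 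Everything else—the Helmholtz split, the oscillation estimate, and the final assembly—is routine once this identification is in place.
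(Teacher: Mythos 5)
Your proposal is correct and follows essentially the same route as the paper: the paper phrases the splitting via the fictitious space lemma with $\Pi=({\rm curl},\nabla)$ and uses the Helmholtz orthogonality to get the constant $1$, which is exactly your direct orthogonal decomposition $\|\sigma-\sigma_h\|^2=\|{\rm curl}\alpha\|^2+\|\nabla\beta\|^2$; the localization of $-(\sigma_h,{\rm curl}(\phi_i v))$ via \eqref{eq:div_perp}, the identification of $\sigma_i+\sigma_h^\perp\phi_i$ with the solution of Problem \ref{prob:min_sigmaRT}, the compatibility check by testing \eqref{def:MFEM_Poisson1} with ${\rm curl}\phi_i$, and the oscillation bound for the gradient part all match the paper's argument.
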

\begin{proof}
For each interior vertex $a_i\in\mathring{\Omega}$, the definition of \eqref{def:MFEM_Poisson1} with $\tau_h={\rm curl}\phi_i$ implies the compatibility condition $(\sigma_h\cdot{\rm curl}\phi_i,1)_{\Omega_i}=0$,
ensuring the well-posedness of the local problem \eqref{eq:local_MFEM} surrounding $a_i$.

Let $\overline{V}=H_0^1(\Omega)\times \widetilde{H}^1(\Omega)$ and $\Pi=({\rm curl},\nabla): \overline{V}\rightarrow L^2(\Omega,\mathbb{V})$ be given by
\[
\Pi(\varphi,\psi)={\rm curl}\varphi+\nabla \psi,\quad(\varphi,\psi)\in H_0^1(\Omega)\times\widetilde{H}^1(\Omega).
\]
Then $\Pi$ is a bounded operator. Lemma \ref{lem:Helmholtz_vector} implies that $\overline{V}$, $\Pi$ satisfy the assumption (b) Lemma \ref{lem:FSP} with $V=V^*=L^2(\Omega,\mathbb{V})$, $A=\text{id}$. As a result,
\begin{equation*}
\|\sigma-\sigma_h\|^2\eqsim\|{\rm curl}^*(\sigma-\sigma_h)\|_{H^{-1}(\Omega)}^2+\|\nabla^* (\sigma-\sigma_h)\|_{\widetilde{H}^1(\Omega)^*}^2.
\end{equation*}
Using the orthogonality in Lemma \ref{lem:Helmholtz_vector} and ${\rm curl}^*\sigma={\rm rot}\nabla u=0$, we further verify $c_1=1$ in Lemma \ref{lem:FSP} and obtain
\begin{subequations}\label{eq:sigma_errbound}
    \begin{align}
        &\|\sigma-\sigma_h\|^2\leq\|{\rm curl}^*\sigma_h\|_{H^{-1}(\Omega)}^2+\|\nabla^* (\sigma-\sigma_h)\|_{\widetilde{H}^1(\Omega)^*}^2,\\
        &\|{\rm curl}^*\sigma_h\|_{H^{-1}(\Omega)}^2+\|\nabla^* (\sigma-\sigma_h)\|_{\widetilde{H}^1(\Omega)^*}^2\lesssim\|\sigma-\sigma_h\|^2.
    \end{align}
\end{subequations}
For $v\in H_0^1(\Omega)$, the localization $({\rm curl}^*\sigma_h)_i$ is
\begin{align*}
\langle ({\rm curl}^*\sigma_h)_i,v\rangle&=(\sigma_h,{\rm curl}(\phi_iv))\\
&=(\sigma_h\cdot{\rm curl}\phi_i,v)+(\sigma_h\phi_i,{\rm curl}v)\\
&=(\sigma_h\cdot{\rm curl}\phi_i,v)+\langle{\rm div}(\sigma_h^\perp\phi_i),v\rangle,
\end{align*}
where the algebraic identity \eqref{eq:div_perp} is used.
Therefore, Problem \ref{prob:min_sigmaRT} with $R={\rm curl}^*\sigma_h$ is equivalent to 
\begin{equation*}
\begin{aligned}
&\sigma_i=\arg\min_{\tau\in\mathcal{RT}_p(\Omega_i)}\|\tau+\sigma_h^\perp\phi_i\|_{L^2(\Omega_i)},\\
&\text{ subject to }{\rm div}\tau=\sigma_h\cdot{\rm curl}\phi_i.    
\end{aligned}
\end{equation*}
It is noted that ${\rm div}(\sigma_i+\sigma_h^\perp\phi_i)=({\rm curl}^*\sigma_h)_i.$
Then using Lemma \ref{lem:equi_residualH1}, we obtain
\begin{equation}\label{eq:curlsigmah}
\Big\|\sum_{i\in\mathcal{I}}\sigma_i+\sigma_h^\perp\phi_i\Big\|\lesssim\Big\|{\rm curl}^*\sigma_h\Big\|_{H^{-1}(\Omega)}\leq\Big\|\sum_{i\in\mathcal{I}}\sigma_i+\sigma_h^\perp\phi_i\Big\|.
\end{equation}
To estimate the term $\|\nabla^* (\sigma-\sigma_h)\|_{\widetilde{H}^1(\Omega)^*}$ in \eqref{eq:sigma_errbound}, we proceed as follows
\begin{equation}\label{eq:gradstar}
\begin{aligned}
\|\nabla^* (\sigma-\sigma_h)\|_{\widetilde{H}^1(\Omega)^*}&=\sup_{v\in\widetilde{H}^1(\Omega), |v|_{H^1(\Omega)}=1}(\nabla^* (\sigma-\sigma_h),v)\\    &=\sup_{v\in\widetilde{H}^1(\Omega),|v|_{H^1(\Omega)}=1}(f-f_h,v-v_h)\\
    &\leq\|\gamma h(f-f_h)\|\sup_{v\in\widetilde{H}^1(\Omega),|v|_{H^1(\Omega)}=1}\|\gamma^{-1}h^{-1}(v-v_h)\|.
\end{aligned}
\end{equation}
Combining \eqref{eq:sigma_errbound}, \eqref{eq:curlsigmah} and \eqref{eq:gradstar} completes the proof.
\end{proof}
Recall the Poincar\'e inequality $\|v-v_h\|_{L^2(T)}\leq  (h_T/\pi)|v|_{H^1(T)}$ (cf.~\cite{PayneWeinberger1960,Bebendorf2003}) and thus the estimate $\gamma\leq1/\pi$ holds.

For the mixed method \eqref{def:MFEM_Poisson} for Poisson's equation, a posteriori error estimates by equilibration have been extensively investigated in the literature, see, e.g., \cite{Ainsworth2007,ErnVohralik2015}. Classical equilibrated a posteriori error bounds of mixed FEMs were often achieved
by postprocessing the displacement variable $u_h$. The error estimator in Theorem \ref{thm:mixPoisson} is new since it still utilizes equilibration in $H^{-1}(\Omega)$ by solving local problems with $\sigma_h$ as source terms instead of postprocessing $u_h$.

\subsection{Hellan--Herrmann--Johnson Method}\label{subsect:HHJ} The analysis in Section \ref{subsect:mixPoisson} could be generalized to more complicated model problems such as  the following biharmonic equation in mixed form:
\begin{subequations}\label{eq:mix4thorder}
\begin{align}
\sigma-{\rm hess}u&=0\quad\text{ in }\Omega,\label{eq:mix4thorder_1}\\
    {\rm divdiv}\sigma&=f\quad\text{ in }\Omega,\\
    u=\partial_{nn}u&=0\quad\text{ on }\partial\Omega.
\end{align}
\end{subequations}
A natural finite element discretization for \eqref{eq:mix4thorder} is the Hellan--Herrmann--Johnson (HHJ) method.
Given a piecewise smooth and symmetric stress field $\tau$, let $\tau_{nn}=(\tau n)\cdot n$ and $\tau_{nt}=(\tau n)\cdot t$ denote its normal-normal and normal-tangential component on $\mathcal{E}_h$, respectively.
For a non-negative integer $p$, the HHJ method uses normal-normal continuous stress space 
\begin{align*}
V_h({\rm divdiv})=&\{\tau_h\in L^2(\Omega,\mathbb{S}): \tau_h|_T\in\mathcal{P}_p(\mathbb{S})\text{ for each }T\in\mathcal{T}_h,\\
&(\tau_h)_{nn}\text{ is continuous across each }E\in\mathcal{E}_h,~(\tau_h)_{nn}|_{\partial\Omega}=0\}.  
\end{align*}
The displacement space $V_h({\rm grad})\subset H_0^1(\Omega)$ consists of continuous and piecewise polynomials of degree $\leq p+1$. 

Given $\tau_h\in V_h({\rm divdiv})$ and $v\in \widetilde{H}^2(\Omega)$, direct calculation shows
\begin{equation*}
\langle{\rm divdiv}\tau_h,v\rangle=(\tau_h,{\rm hess}v)=-({\rm div}_h\tau_h,\nabla v)-(\llbracket(\tau_h)_{nt}\rrbracket,\partial_tv)_{\mathcal{E}_h}.
\end{equation*}
For $v\in \widetilde{H}^2(\Omega)+V_h({\rm grad})$, define the bilinear form
\begin{equation}\label{def:b}
\begin{aligned}
    b(\tau_h,v)&=-({\rm div}_h\tau_h,\nabla v)-(\llbracket(\tau_h)_{nt}\rrbracket,\partial_tv)_{\mathcal{E}_h}\\
    &=({\rm div}_h{\rm div}_h\tau_h,v)+(\llbracket n\cdot{\rm div}_h\tau_h\rrbracket,v)_{\mathcal{E}_h}-(\llbracket(\tau_h)_{nt}\rrbracket,\partial_tv)_{\mathcal{E}_h}.
\end{aligned}
\end{equation}
The HHJ method for \eqref{eq:mix4thorder} seeks $\sigma_h\in V_h({\rm divdiv})$ and $u_h\in V_h({\rm grad})$ such that
\begin{subequations}\label{def:HHJ}
\begin{align}
(\sigma_h,\tau_h)-b(\tau_h,u_h)&=0,\quad\tau_h\in V_h({\rm divdiv}),\label{def:HHJ1}\\
b(\sigma_h,v_h)&=(f,v_h),\quad v_h\in V_h({\rm grad}).
\end{align}
\end{subequations}

A posteriori error analysis of \eqref{def:HHJ} relies on the following  Helmholtz-type decomposition, see \cite{BeiraoNiiranenStenberg2010,BeiraoNiiranenStenberg2007} for the original proof and a systematic derivation in \cite{ArnoldHu2021} based on the BGG complex.
\begin{lemma}[Helmholtz Decomposition of Tensor Fields]\label{lem:Helmholtz_tensor}
Let $\Omega\subset\mathbb{R}^2$ be a simply-connected Lipschitz domain. For any $\tau\in L^2(\Omega,\mathbb{S})$, there exist $\varphi\in H_0^1(\Omega,\mathbb{V})$ and $\psi\in\widetilde{H}^2(\Omega)$ such that 
\begin{align*}
\tau={\rm symcurl}\varphi\oplus{\rm hess}\psi.
\end{align*} 
\end{lemma}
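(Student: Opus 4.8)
The plan is to realize the claimed splitting as the orthogonal decomposition
\[
L^2(\Omega,\mathbb{S})={\rm symcurl}\,\big(H_0^1(\Omega,\mathbb{V})\big)\oplus{\rm hess}\,\big(\widetilde{H}^2(\Omega)\big),
\]
by first verifying that these two ranges are $L^2$-orthogonal and closed, and then showing that they together exhaust $L^2(\Omega,\mathbb{S})$. For orthogonality I would integrate ${\rm hess}$ by parts twice against a tensor ${\rm symcurl}\,\varphi$ with $\varphi\in H_0^1(\Omega,\mathbb{V})$ and $\psi\in\widetilde{H}^2(\Omega)$. Using $\psi=0$ on $\partial\Omega$ to discard the term pairing $\psi$ against ${\rm div}$-type traces and to write $\nabla\psi=(\partial_n\psi)n$ on the boundary, one is left with
\[
\big({\rm hess}\,\psi,\,{\rm symcurl}\,\varphi\big)=\big\langle\psi,{\rm divdiv}({\rm symcurl}\,\varphi)\big\rangle+\int_{\partial\Omega}\partial_n\psi\,({\rm symcurl}\,\varphi)_{nn}.
\]
A short direct computation gives the complex relation ${\rm divdiv}\,{\rm symcurl}=0$, killing the interior term, and a boundary computation shows $({\rm symcurl}\,\varphi)_{nn}=0$ whenever $\varphi$ vanishes on $\partial\Omega$ (there the tangential derivative of $\varphi$ is zero, so every first derivative of $\varphi$ is normal, and the normal--normal combination cancels). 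Hence both contributions vanish. Closedness of ${\rm hess}(\widetilde{H}^2)$ is immediate from the fact, recorded in Section~\ref{sect:prelim}, that $({\rm hess}\,\bullet,{\rm hess}\,\bullet)$ is an inner product on $\widetilde{H}^2(\Omega)$, i.e.\ ${\rm hess}$ is bounded below there; closedness of ${\rm symcurl}(H_0^1)$ follows similarly.

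For completeness, given $\tau\in L^2(\Omega,\mathbb{S})$ I would define $\psi\in\widetilde{H}^2(\Omega)$ by the well-posed (Lax--Milgram) problem $({\rm hess}\,\psi,{\rm hess}\,v)=(\tau,{\rm hess}\,v)$ for all $v\in\widetilde{H}^2(\Omega)$, so that the residual $\eta:=\tau-{\rm hess}\,\psi$ is $L^2$-orthogonal to ${\rm hess}(\widetilde{H}^2)$. Testing the relation $(\eta,{\rm hess}\,v)=0$ first against $v\in C_c^\infty(\Omega)$ yields ${\rm divdiv}\,\eta=0$ in the distributional sense; testing then against general $v\in\widetilde{H}^2(\Omega)$ and undoing the integration by parts above, the only surviving boundary contribution pairs the arbitrary datum $\partial_n v$ against the normal--normal trace of $\eta$, which forces $\eta_{nn}=0$ on $\partial\Omega$ (interpreted through the $H({\rm divdiv})$ trace theory, since ${\rm divdiv}\,\eta=0$).

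It then remains to produce a potential $\varphi\in H_0^1(\Omega,\mathbb{V})$ with ${\rm symcurl}\,\varphi=\eta$, and this is the main obstacle. It is exactly the exactness of the two-dimensional ${\rm divdiv}$ complex with essential boundary conditions: on a simply-connected domain the relevant cohomology is trivial, so every symmetric $L^2$ tensor that is ${\rm divdiv}$-free and satisfies $\eta_{nn}=0$ on $\partial\Omega$ lies in ${\rm symcurl}\,(H_0^1(\Omega,\mathbb{V}))$—the trace condition $\eta_{nn}=0$ being precisely dual to $\varphi|_{\partial\Omega}=0$, as the orthogonality computation already foreshadows. I would invoke this step directly from the BGG construction of the ${\rm divdiv}$ complex in \cite{ArnoldHu2021} or from the original arguments of \cite{BeiraoNiiranenStenberg2010,BeiraoNiiranenStenberg2007}; alternatively one constructs $\varphi$ by hand, first solving an auxiliary second-order problem for a vector potential and then correcting it by a ${\rm symcurl}$-closed field to enforce the boundary trace. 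Combining the two steps with the orthogonality of the first paragraph delivers $\tau={\rm symcurl}\,\varphi\oplus{\rm hess}\,\psi$ with the asserted orthogonal splitting.
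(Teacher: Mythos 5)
The paper offers no proof of Lemma \ref{lem:Helmholtz_tensor}: it is quoted directly from \cite{BeiraoNiiranenStenberg2007,BeiraoNiiranenStenberg2010}, with \cite{ArnoldHu2021} cited for a systematic BGG-complex derivation, so there is no in-paper argument to compare yours against. Your outline is the standard route taken in those references and is structurally sound: the integration by parts giving $({\rm hess}\,\psi,{\rm symcurl}\,\varphi)=(\psi,{\rm divdiv}\,{\rm symcurl}\,\varphi)+\int_{\partial\Omega}\partial_n\psi\,({\rm symcurl}\,\varphi)_{nn}$ is correct (the term pairing $\psi$ with $n\cdot{\rm div}({\rm symcurl}\,\varphi)$ dies because $\psi|_{\partial\Omega}=0$, and $\nabla\psi=(\partial_n\psi)n$ there for the same reason), the identities ${\rm divdiv}\,{\rm symcurl}=0$ and $({\rm symcurl}\,\varphi)_{nn}=\partial_t(\varphi\cdot n)=0$ for $\varphi\in H_0^1(\Omega,\mathbb{V})$ check out, and the Lax--Milgram definition of $\psi$ correctly reduces the problem to showing that the residual $\eta$, characterized by ${\rm divdiv}\,\eta=0$ in $\widetilde{H}^2(\Omega)^*$ (which weakly encodes $\eta_{nn}=0$ on $\partial\Omega$), lies in ${\rm symcurl}\,(H_0^1(\Omega,\mathbb{V}))$. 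That last step is the entire mathematical content of the lemma, and you do not prove it --- you invoke exactness of the two-dimensional ${\rm divdiv}$ complex with essential boundary conditions from the very references the paper cites. That is a legitimate and honest delegation, and it leaves your proposal exactly as complete as the paper's own treatment, but be aware it is not a self-contained proof. Two minor glosses: closedness of ${\rm symcurl}\,(H_0^1(\Omega,\mathbb{V}))$ does not ``follow similarly'' from definiteness alone; it needs the Korn-type bound $|\varphi|_{H^1(\Omega)}^2\le 2\|{\rm symcurl}\,\varphi\|^2$ (cf.~\cite{Horgan1995}), which the paper itself invokes in the proof of Theorem \ref{thm:HHJ}, together with the Poincar\'e inequality on $H_0^1$; and the passage from the weak condition $\eta_{nn}=0$ to a potential $\varphi$ vanishing on all of $\partial\Omega$ (rather than merely having constant tangential data modulo the finite-dimensional kernel of ${\rm symcurl}$) is part of what the cited exactness result must supply, so it should be attributed there rather than treated as automatic.
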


Let $P_{h,p}$ be the $L^2$ projection onto the broken finite element space of degree $\leq p$. Let $I_h=I_{h,p+1}$ be the moment-based modified Lagrange interpolation onto $C^0$ piecewise polynomials of degree $\leq p+1$. On each element $T\in\mathcal{T}_h$, the interpolant $I_hv|_T=I_Tv=I_{T,p+1}v\in\mathcal{P}_{p+1}$ is determined by 
\begin{subequations}\label{eq:momentIh}
\begin{align}
I_Tv(a_i)&=v(a_i),\quad\text{for each vertex } a_i\in\overline{T},\\
    \int_{E}(I_Tv)wds&=\int_Evwds,\quad\text{for each edge }E\subset\partial T\text{ and } w\in\mathcal{P}_{p-1}(E),\label{eq:moment_edge}\\
    \int_T(I_Tv)qdx&=\int_Tvqdx,\quad \text{for each }q\in\mathcal{P}_{p-2}(T).\label{eq:moment_volume}
\end{align}
\end{subequations}
In the next theorem, we present a new equilibrated residual error estimator for the HHJ method. 
\begin{theorem}\label{thm:HHJ}
Assume $\Omega$ is a simply-connected polygonal domain. Let $\sigma_h\in V_h({\rm divdiv})$ be a piecewise polynomial of degree $\leq p$. For each $i\in\mathcal{I}$, let $\sigma_i\in\mathcal{RT}_p(\Omega^h_i,\mathbb{V})$ and $u_i\in\mathcal{P}^{\rm dG}_p(\Omega^h_i,\mathbb{V})$ solve
\begin{equation}\label{eq:local_HHJ}
\begin{aligned}
    (\sigma_i,\tau)_{\Omega_i}+({\rm div}\tau,u_i)_{\Omega_i}&=-(\sigma_h^\perp\phi_i,\tau)_{\Omega_i},\quad\tau\in\mathcal{RT}_p(\Omega^h_i,\mathbb{V}),\\
    ({\rm div}\sigma_i,v)_{\Omega_i}&=(\sigma_h\cdot{\rm curl}\phi_i,v)_{\Omega_i},\quad v\in\mathcal{P}^{\rm dG}_p(\Omega^h_i,\mathbb{V}),
\end{aligned} 
\end{equation}
where $\sigma_h\cdot{\rm curl}\phi_i=((\sigma_h)_1\cdot{\rm curl}\phi_i,(\sigma_h)_2\cdot{\rm curl}\phi_i)$.
Then for \eqref{def:HHJ} it holds that 
\begin{equation*}
\|\sigma-\sigma_h\|^2\leq2\Big\|\sum_{i\in\mathcal{I}}\sigma_i+\sigma_h^\perp\phi_i\Big\|^2+\|\alpha h^2(f-P_{h,p-2}f)\|^2,
\end{equation*}
where $P_{h,-2}=P_{h,-1}=0$ and $\alpha$ is a piecewise constant with $\alpha_T=h_T^{-2}\max_{v\neq0}\|v-I_{T,p+1}v\|_{L^2(T)}/|v|_{H^2(T)}$ for $T\in\mathcal{T}_h$.
In addition,
\begin{equation*}
\Big\|\sum_{i\in\mathcal{I}}\sigma_i+\sigma_h^\perp\phi_i\Big\|\lesssim\|\sigma-\sigma_h\|.
\end{equation*}
\end{theorem}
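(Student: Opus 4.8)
The plan is to follow the template already laid out in Theorem \ref{thm:mixPoisson}, replacing the vector Helmholtz decomposition (Lemma \ref{lem:Helmholtz_vector}) with its tensor-valued analogue (Lemma \ref{lem:Helmholtz_tensor}). First I would introduce the fictitious space $\overline{V}=H_0^1(\Omega,\mathbb{V})\times\widetilde{H}^2(\Omega)$ and the transfer operator $\Pi=({\rm symcurl},{\rm hess})$ mapping $\overline{V}$ onto $L^2(\Omega,\mathbb{S})$. Lemma \ref{lem:Helmholtz_tensor} supplies assumption (b) of the Fictitious Space Lemma (Lemma \ref{lem:FSP}) with $V=V^*=L^2(\Omega,\mathbb{S})$ and $A=\mathrm{id}$, and the orthogonality of the ${\rm symcurl}$ and ${\rm hess}$ components forces $c_1=1$. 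This yields the split
\begin{equation*}
\|\sigma-\sigma_h\|^2\leq\|{\rm symcurl}^*(\sigma-\sigma_h)\|_{H^{-1}(\Omega,\mathbb{V})}^2+\|{\rm hess}^*(\sigma-\sigma_h)\|_{\widetilde{H}^2(\Omega)^*}^2,
\end{equation*}
with a matching lower bound up to a generic constant. Since $\sigma={\rm hess}u$, the exact stress satisfies ${\rm symcurl}^*\sigma=0$, so the first residual reduces to ${\rm symcurl}^*\sigma_h$, which is computable. The factor of $2$ in the stated upper bound suggests I should not claim $c_1=1$ for the combined estimate but rather bound the sum of the two squared residuals by $2$ times the equilibrated term plus the oscillation, absorbing a constant somewhere; I would verify this bookkeeping carefully.

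Next I would localize the $H^{-1}$-residual exactly as in Theorem \ref{thm:mixPoisson}. Treating $\sigma_h$ row-wise as a pair of vector fields and applying the identity \eqref{eq:div_perp} together with integration by parts, the localization $({\rm symcurl}^*\sigma_h)_i$ should take the form
\begin{equation*}
\langle({\rm symcurl}^*\sigma_h)_i,v\rangle=(\sigma_h\cdot{\rm curl}\phi_i,v)+\langle{\rm div}(\sigma_h^\perp\phi_i),v\rangle,\quad v\in H_0^1(\Omega,\mathbb{V}),
\end{equation*}
so that Problem \ref{prob:min_sigmaRT} with $R={\rm symcurl}^*\sigma_h$ becomes precisely the local mixed problem \eqref{eq:local_HHJ}. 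The compatibility condition at interior vertices, ensuring well-posedness of \eqref{eq:local_HHJ}, comes from testing \eqref{def:HHJ1} with a suitable $\tau_h$ built from ${\rm curl}\phi_i$. Invoking Lemma \ref{lem:equi_residualH1} then gives the two-sided bound on $\|{\rm symcurl}^*\sigma_h\|_{H^{-1}(\Omega,\mathbb{V})}$ in terms of $\|\sum_i\sigma_i+\sigma_h^\perp\phi_i\|$, which delivers both the lower bound in the theorem and the equilibrated part of the upper bound.

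It remains to estimate the $\widetilde{H}^2$-residual $\|{\rm hess}^*(\sigma-\sigma_h)\|_{\widetilde{H}^2(\Omega)^*}$. Using ${\rm hess}^*=\langle{\rm divdiv}\bullet,\cdot\rangle$ and the Galerkin orthogonality from \eqref{def:HHJ}, for any $v\in\widetilde{H}^2(\Omega)$ I would insert the interpolant $I_hv\in V_h({\rm grad})$ and reduce the dual pairing to $(f-P_{h,p-2}f,v-I_hv)$, since $\sigma={\rm hess}u$ means ${\rm hess}^*\sigma$ acts as $f$ and the moment conditions \eqref{eq:moment_edge}--\eqref{eq:moment_volume} kill the lower-order data. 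The element-wise interpolation estimate $\|v-I_{T,p+1}v\|_{L^2(T)}\leq\alpha_Th_T^2|v|_{H^2(T)}$ then bounds this by $\|\alpha h^2(f-P_{h,p-2}f)\|\,|v|_{H^2(\Omega)}$. The main obstacle I anticipate is this last step: verifying that the interpolation operator $I_h$ is $H^2$-conforming (lands in $C^0$), that its moment conditions exactly annihilate $P_{h,p-2}f$ so only the genuine data oscillation survives, and that the boundary terms in $b(\cdot,\cdot)$ vanish under the essential condition $v|_{\partial\Omega}=0$; the $p$-robustness hinges on the interpolation constant $\alpha_T$ being degree-independent, which must be argued via a scaling and moment-preservation argument rather than a naive approximation bound. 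Assembling \eqref{eq:sigma_errbound}-type inequalities with the equilibrated bound and the oscillation bound completes the proof.
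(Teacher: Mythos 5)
Your proposal follows the paper's proof almost step for step: the fictitious space $\overline{V}=H_0^1(\Omega,\mathbb{V})\times\widetilde{H}^2(\Omega)$ with $\Pi=({\rm symcurl},{\rm hess})$ and $A={\rm id}$, the reduction of the ${\rm symcurl}^*$ residual to Problem \ref{prob:min_sigmaRT} and Lemma \ref{lem:equi_residualH1}, the compatibility condition from testing \eqref{def:HHJ1} with ${\rm symcurl}(\phi_ie_j)$, and the treatment of the ${\rm hess}^*$ residual via Galerkin orthogonality, the moment conditions \eqref{eq:momentIh}, and the elementwise interpolation constant $\alpha_T$. All of that is correct and matches the paper.

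There is, however, one genuine gap, precisely at the point you flagged as "bookkeeping to verify carefully." Your claim that the orthogonality of the ${\rm symcurl}$ and ${\rm hess}$ components forces $c_1=1$ is false for the tensor case. Orthogonality gives $\|\tau\|^2=\|{\rm symcurl}\varphi\|^2+\|{\rm hess}\psi\|^2$, but assumption (b) of Lemma \ref{lem:FSP} requires bounding the fictitious-space norm $|\varphi|_{H^1(\Omega)}^2+|\psi|_{H^2(\Omega)}^2$ by $\|\tau\|^2$, and $\|{\rm symcurl}\varphi\|$ only controls the \emph{symmetric} part of the row-wise curl of $\varphi$; one has $\|{\rm symcurl}\varphi\|\leq|\varphi|_{H^1(\Omega)}$, which is the wrong direction. (Contrast this with Theorem \ref{thm:mixPoisson}, where $\|{\rm curl}\varphi\|=|\varphi|_{H^1(\Omega)}$ exactly, so $c_1=1$ does hold there.) The missing ingredient is the Korn-type inequality $|\varphi|_{H^1(\Omega)}^2\leq2\|{\rm symcurl}\varphi\|^2$ for $\varphi\in H_0^1(\Omega,\mathbb{V})$ (cf.~\cite{Horgan1995}); combined with the orthogonality and ${\rm symcurl}^*\sigma=0$ it yields exactly
\begin{equation*}
\|\sigma-\sigma_h\|^2\leq2\,\|{\rm symcurl}^*\sigma_h\|_{H^{-1}(\Omega,\mathbb{V})}^2+\|{\rm hess}^*(\sigma-\sigma_h)\|_{\widetilde{H}^2(\Omega)^*}^2,
\end{equation*}
which is the source of the factor $2$ in the theorem (it multiplies only the equilibrated term, not the oscillation). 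Without Korn's inequality you obtain the two-sided equivalence only up to a generic constant, which suffices for the efficiency bound but not for the guaranteed, constant-explicit upper bound that is the point of the statement. A minor further remark: the $p$-robustness of the estimator rests on Lemma \ref{lem:equi_residualH1} (i.e.~on the $p$-robust lower bound for the equilibrated flux), not on degree-independence of $\alpha_T$; the oscillation term is a guaranteed upper contribution for every $p$ by the very definition of $\alpha_T$.
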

\begin{proof}
For each interior vertex $a_i\in\mathring{\Omega}$, it follows from \eqref{def:HHJ1} with $\tau_h={\rm symcurl}(\phi_ie_j)$ and $b({\rm symcurl}(\phi_ie_j),u_h)=0$ (cf.~\cite{HHX2011}) that $(\sigma_h\cdot{\rm curl}\phi_i,e_j)_{\Omega_i}=0$, $j=1, 2$, which
ensures the well-posedness of the local problem \eqref{eq:local_HHJ} surrounding $a_i$.

Setting $\overline{V}=H_0^1(\Omega,\mathbb{V})\times \widetilde{H}^2(\Omega)$,  $V=V^*=L^2(\Omega,\mathbb{S})$, $A={\rm id}$, $\Pi=({\rm symcurl},{\rm hess}): \overline{V}\rightarrow L^2(\Omega,\mathbb{S})$  in Lemma \ref{lem:FSP}, we obtain the equivalence by Lemma \ref{lem:Helmholtz_tensor}
\begin{equation*}
\|\sigma-\sigma_h\|^2\eqsim\|{\rm symcurl}^*(\sigma-\sigma_h)\|_{H^{-1}(\Omega,\mathbb{V})}^2+\|{\rm hess}^* (\sigma-\sigma_h)\|_{\widetilde{H}^2(\Omega)^*}^2.
\end{equation*}
Using the orthogonality in Lemma \ref{lem:Helmholtz_tensor}, the Korn's inequality (cf.~\cite{Horgan1995})
\begin{equation*}
|\varphi|_{H^1(\Omega)}^2\leq2\|{\rm symcurl}\varphi\|^2,\quad\varphi\in H_0^1(\Omega,\mathbb{V}),  
\end{equation*}
and ${\rm symcurl}^*\sigma={\rm symcurl}^*{\rm hess}u=0$, we further obtain 
\begin{subequations}\label{eq:sigma_errbound_HHJ}
    \begin{align}
        &\|\sigma-\sigma_h\|^2\leq2\|{\rm symcurl}^*\sigma_h\|_{H^{-1}(\Omega,\mathbb{V})}^2+\|{\rm hess}^* (\sigma-\sigma_h)\|_{\widetilde{H}^2(\Omega)^*}^2,\\
        &\|{\rm symcurl}^*\sigma_h\|_{H^{-1}(\Omega,\mathbb{V})}^2+\|{\rm hess}^* (\sigma-\sigma_h)\|_{\widetilde{H}^2(\Omega)^*}^2\lesssim\|\sigma-\sigma_h\|^2.
    \end{align}
\end{subequations}
By direct calculation, we have
\begin{equation*}
\langle ({\rm symcurl}^*\sigma_h)_i,v\rangle=(\sigma_h\cdot{\rm curl}\phi_i,v)+\langle{\rm rot}(\sigma_h\phi_i),v\rangle,\quad v\in H_0^1(\Omega,\mathbb{V}).
\end{equation*}
For the same reason as in the proof of Theorem \ref{thm:mixPoisson}, we obtain
\begin{equation}\label{eq:symcurlsigmah}
\Big\|\sum_{i\in\mathcal{I}}\sigma_i+\sigma_h^\perp\phi_i\Big\|\lesssim\|{\rm symcurl}^*\sigma_h\|_{H^{-1}(\Omega,\mathbb{V})}\leq\Big\|\sum_{i\in\mathcal{I}}\sigma_i+\sigma_h^\perp\phi_i\Big\|.
\end{equation}

By the definition of \eqref{def:b} and $\sigma={\rm hess}u$, we have 
\begin{equation}\label{eq:b_orthogonal}
b(\sigma-\sigma_h,v_h)=0,\quad v_h\in V_h(\rm grad).    
\end{equation}
In what follows, the second term $\|{\rm hess}^* (\sigma-\sigma_h)\|_{\widetilde{H}^2(\Omega)^*}$ is estimated by 
\begin{equation}\label{eq:hess_0}
\begin{aligned}
\|{\rm hess}^* (\sigma-\sigma_h)\|_{\widetilde{H}^2(\Omega)^*}&=\sup_{v\in\widetilde{H}^2(\Omega), |v|_{H^2(\Omega)}=1}\langle{\rm divdiv}(\sigma-\sigma_h),v\rangle\\
    &=\sup_{v\in\widetilde{H}^2(\Omega), |v|_{H^2(\Omega)}=1}b(\sigma-\sigma_h,v-I_hv)\\
    &=\sup_{v\in\widetilde{H}^2(\Omega), |v|_{H^2(\Omega)}=1}(f-{\rm div}_h{\rm div}_h\sigma_h,v-I_hv).
\end{aligned}
\end{equation}
In the last equality, we used the fact that $(\llbracket n\cdot{\rm div}_h\tau_h\rrbracket,v-I_hv)_{\mathcal{E}_h}=0$ and $(\llbracket(\tau_h)_{nt}\rrbracket,\partial_t(v-I_hv))_{\mathcal{E}_h}=0$. It then follows from \eqref{eq:hess_0} and \eqref{eq:moment_volume} that 
\begin{equation}\label{eq:hess}
\begin{aligned}
&\|{\rm hess}^* (\sigma-\sigma_h)\|_{\widetilde{H}^2(\Omega)^*}=\sup_{v\in\widetilde{H}^2(\Omega), |v|_{H^2(\Omega)}=1}(f-P_{h,p-2}f,v-I_hv)\\
&\quad\leq\|\alpha h^2(f-P_{h,p-2}f)\|\sup_{v\in\widetilde{H}^2(\Omega), |v|_{H^2(\Omega)}=1}\|\alpha^{-1}h^{-2}(v-I_hv)\|.
\end{aligned}
\end{equation}
We conclude the stated result by using \eqref{eq:sigma_errbound_HHJ}, \eqref{eq:symcurlsigmah}, \eqref{eq:hess}.
\end{proof}
\begin{remark}\label{remark:HHJ_RTp1} The same analysis implies that the error estimator in Theorem \ref{thm:HHJ} with $\sigma_i\in\mathcal{RT}_{p+1}(\Omega_i,\mathbb{V})$ and $u_i\in\mathcal{P}^{\rm dG}_{p+1}(\Omega_i,\mathbb{V})$ defined by
\begin{equation}\label{eq:local_HHJ_RTp1}
\begin{aligned}
    (\sigma_i,\tau)_{\Omega_i}+({\rm div}\tau,u_i)_{\Omega_i}&=-(\sigma_h^\perp\phi_i,\tau)_{\Omega_i},\quad\tau\in\mathcal{RT}_{p+1}(\Omega_i,\mathbb{V}),\\
    ({\rm div}\sigma_i,v)_{\Omega_i}&=(\sigma_h\cdot{\rm curl}\phi_i,v)_{\Omega_i},\quad v\in\mathcal{P}^{\rm dG}_{p+1}(\Omega_i,\mathbb{V})
\end{aligned} 
\end{equation}
is also a $p$-robust a posteriori error estimate for the HHJ method \eqref{def:HHJ}.
Intuitively, the error estimator based on \eqref{eq:local_HHJ_RTp1} would be more accurate than \eqref{eq:local_HHJ} at the expense of higher computational cost. Numerical results about the error estimator based on \eqref{eq:local_HHJ_RTp1} are presented in Section \ref{subsect:HHJ_experiment}.
\end{remark}

Residual and recovery a posteriori error estimates for the HHJ method could be found in, e.g., \cite{HHX2011,Li2021JSCb}. 
Moreover, 
\cite{BraessPechsteinSchoberl2020} derived an equilibrated a posteriori error estimate of the $C^0$ discontinuous Galerkin method for the fourth order elliptic equation and provided a remark on equilibrated error estimator for the HHJ method. However, a posteriori error estimates in \cite{BraessPechsteinSchoberl2020} are not theoretically  confirmed to be $p$-robust.

It was mentioned in \cite{BraessPechsteinSchoberl2020} that 
\begin{equation*}
    \|v-I_hv\|_{L^2(T)}\leq 0.3682146h_T^2|v|_{H^2(T)}.
\end{equation*}
Therefore, $0.3682146$ is a practical upper bound for the constant $\alpha$ in Theorem \ref{thm:HHJ}.
To obtain tighter interpolation error estimates for larger $p$, one could solve the maximization problem 
\begin{equation}\label{eq:maximization}
\alpha_{\widehat{T}}=\arg\max_{|v|_{H^2(\widehat{T})}=1}\|v-I_{\widehat{T},p+1}v\|_{L^2(\widehat{T})}    
\end{equation}
for each degree $p$ on a reference element $\widehat{T}={\rm conv}\{(0,0), (1,0), (0,1)\}$. Let $F_T(\hat{x})=B_T\hat{x}+b_T$ be the affine transformation from $\widehat{T}$ to $T\in\mathcal{T}_h$ with $B_T=(b_{T,ij})_{1\leq i,j\leq2}$. By a homogeneity argument we obtain a sharper interpolation error bound $\|v-I_Tv\|_{L^2(T)}\leq \alpha_{\widehat{T}}\beta_T|v|_{H^2(T)}$ with $\beta_T^2:=(b_{T,11}^2+b_{T,21}^2)^2+(b_{T,12}^2+b_{T,22}^2)^2+2(b_{T,11}b_{T,12}+b_{T,21}b_{T,22})^2+2b^2_{T,11}b_{T,22}^2+2b^2_{T,12}b_{T,21}^2$ and thus tighter guaranteed a posteriori error estimate of \eqref{def:HHJ}:
\begin{equation}\label{eq:HHJ_practical}
\|\sigma-\sigma_h\|^2\leq2\Big\|\sum_{i\in\mathcal{I}}\sigma_i+\sigma_h^\perp\phi_i\Big\|^2+\sum_{T\in\mathcal{T}_h}\alpha_{\widehat{T}}^2\beta_T^2\|f-P_{h,p-2}f\|_{L^2(T)}^2.
\end{equation}

\begin{remark}
To obtain guaranteed upper bounds for the HHJ method under other boundary conditions, it is necessary to specify the 
constant in Korn's inequalities (cf.~\cite{Horgan1995}). However, the $p$-robustness of the error estimator in Theorem \ref{thm:HHJ} remains true for other boundary conditions and even for space dimension 3. 
\end{remark}

\begin{remark}
The analysis in this section directly yields similar error estimators for $H(\rm divdiv)$ conforming FEMs (cf.~\cite{HuMaZhang2021,ChenHuang2022SINUM,HuLiangMa2022}) for fourth order elliptic equations. Since $H(\rm divdiv)$ conforming finite elements are based on higher order polynomials, the $p$-robustness property is particularly favorable in this scenario. 
\end{remark}

\section{Numerical Experiments}\label{sect:numerical}
To validate the efficiency and robustness of the theoretical results, we test the performance of the equilibrated residual error estimators stated in Theorems \ref{thm:Hcurl}, \ref{thm:curlcurl} and \ref{thm:HHJ} (denoted by $\eta_h$ in each figure). Adaptive mesh refinement is based on standard D\"orfler marking strategy with marking parameter $\theta=0.4$ and the newest vertex bisection. The order $r$ of convergence of any FEM error \texttt{err} is computed by least-squares fitting such that $\texttt{err}\approx N^{-r}$, where $N$ is the number of unknowns in the finite element discretization. In experiments, we also compute the effectivity ratio $\eta_h/\texttt{err}$ for each a posteriori error estimate $\eta_h$.

\subsection{H(curl) Conforming Method}\label{subsect:Hcurl_experiment}We consider two-dimensional $H({\rm curl})$ problems defined on the L-shaped domain $\Omega=(-1,1)^2\backslash([0,1]\times[-1,0])$. Let  $$u(x,y)=\nabla\big(\phi(r)r^{2/3}\sin(2\theta/3)\big)+(\sin(\pi y),\sin(\pi x))$$ be the exact solution of the $H(\rm curl)$ elliptic problem \eqref{eq:Hcurl_elliptic}, where  $(r,\theta)$ is the polar coordinate near the origin and $\phi(r)$ is the cut-off function \begin{equation*}
\phi(r)=\left\{\begin{aligned}
        &\left(1-\frac{r}{0.5}\right)^6,\quad &&r\leq0.5,\\
        &~0,\quad &&r>0.5.
    \end{aligned}\right.
\end{equation*} 
For the semi-definite curl-curl problem \eqref{eq:curlcurl}, we set $f(x,y)=(\sin(\pi y),\sin(\pi x))$ and a very high order finite element solution $\hat{u}\approx u$ as the reference solution  for approximately computing the FEM error $\|{\rm curl}(u-u_h)\|\approx\|{\rm curl}(\hat{u}-u_h)\|$. The curl operators in \eqref{eq:Hcurl_elliptic_FEM}, \eqref{eq:curlcurl_FEM} and  Theorems \ref{thm:Hcurl} and \ref{thm:curlcurl} are naturally replaced with the rot operator.

\begin{figure}[th]
\centering
\includegraphics[width=8cm]{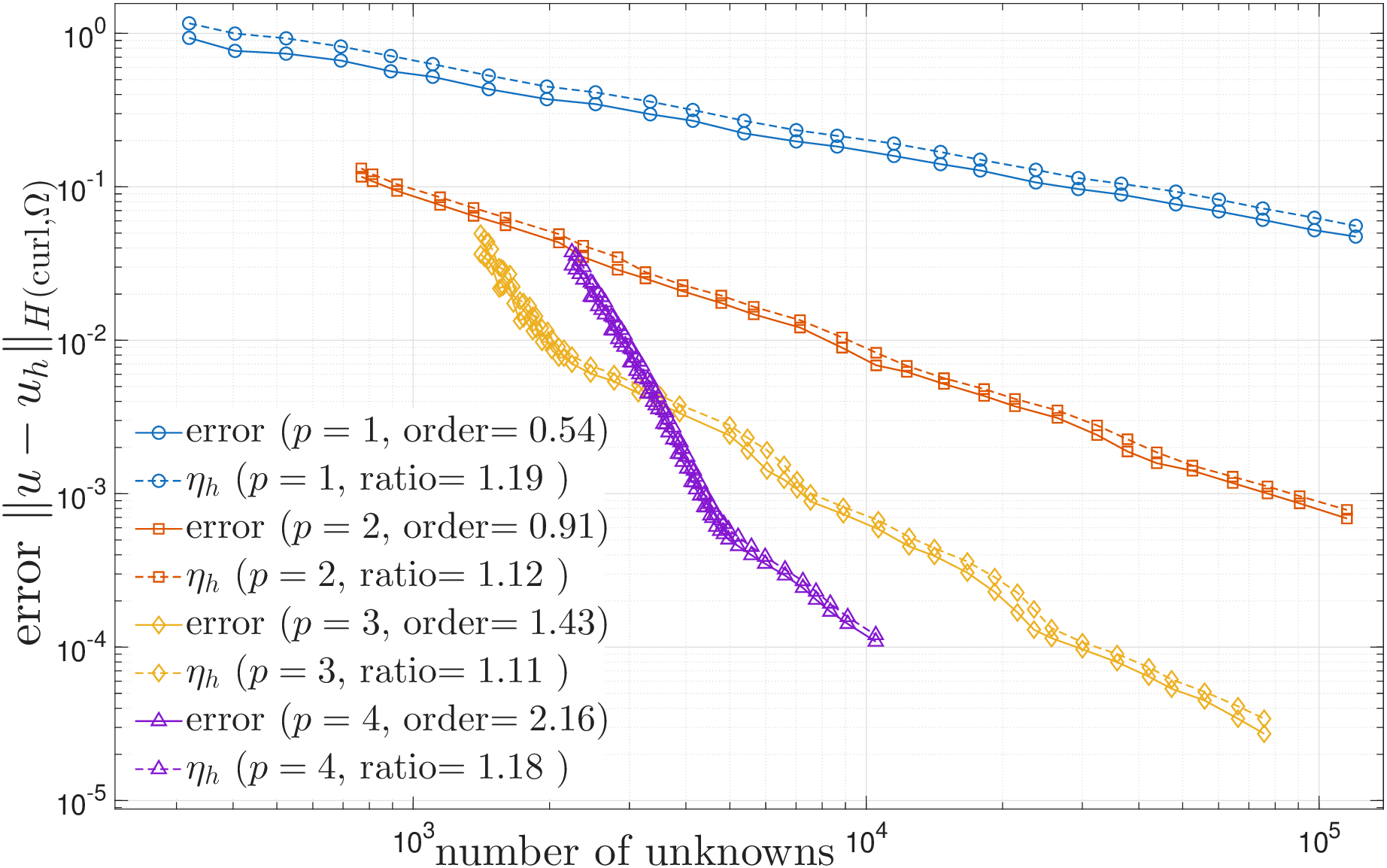}
\caption{A posteriori error estimates and $H(\rm curl)$ norm error of FEMs for the $H(\rm curl)$ elliptic equation.}\label{fig:Hcurl}
\end{figure}

\begin{figure}[th]
    \centering
\includegraphics[width=8cm]{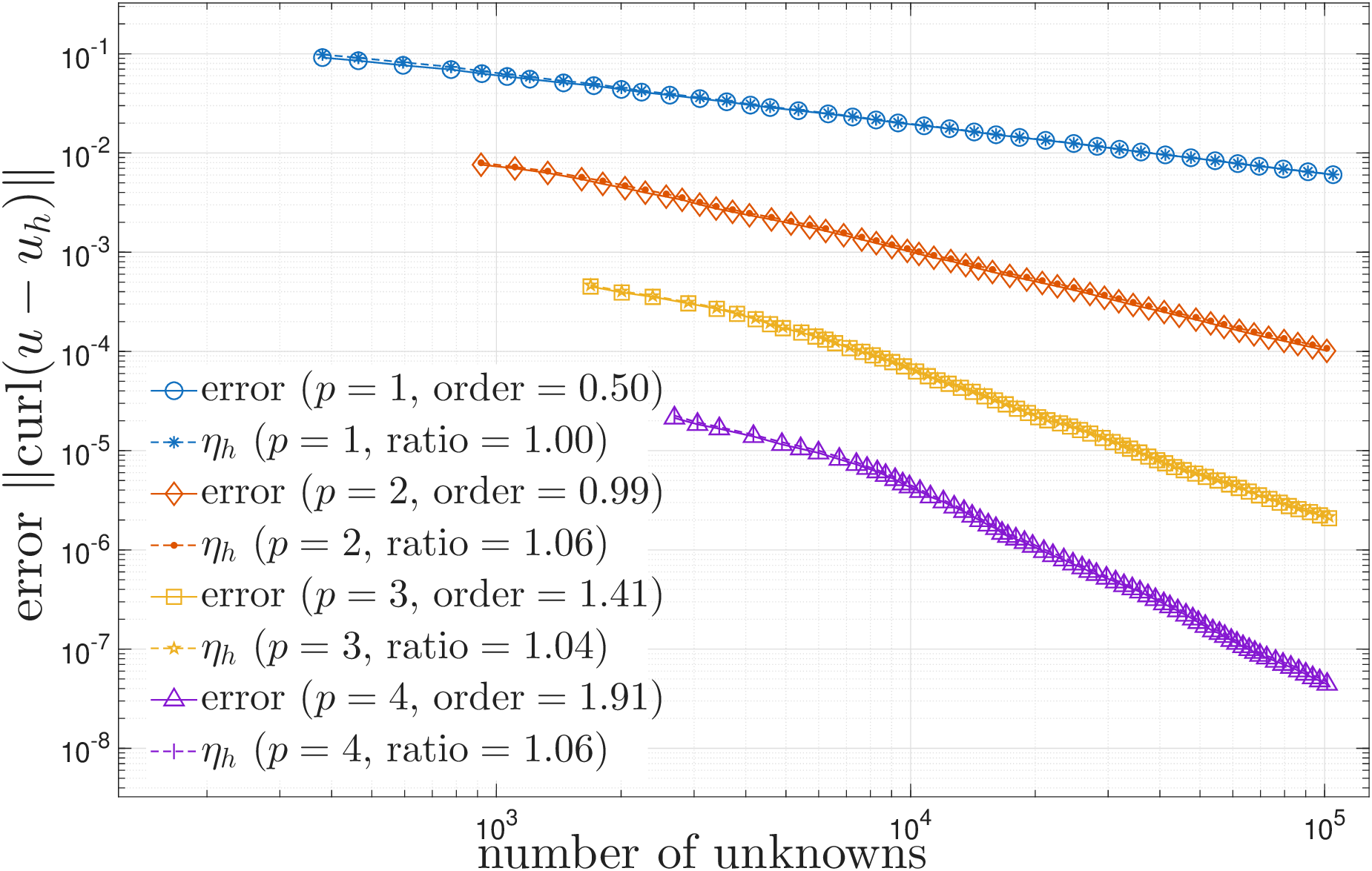} 
\caption{A posteriori error estimates for the curl-curl  equation.}\label{fig:curlcurl}
\end{figure}

As mentioned in Remark \ref{remark:oscillation}, the data oscillation $\|f-P_{h,p}f\|$ is added to the a posteriori error estimates derived in Theorem \ref{thm:Hcurl} and Theorem \ref{thm:curlcurl}, where the local flux given in \eqref{eq:taui} is replaced by its two-dimensional analogue 
\begin{align*}
    (\tau_i,\tau)_{\Omega_i}+({\rm div}\tau,\tilde{u}_i)_{\Omega_i}&=({\rm mskw}(\phi_i{\rm rot}u_h),\tau)_{\Omega_i},\quad\tau\in\mathcal{RT}_{p+1}(\Omega_i,\mathbb{V}),\\
    ({\rm div}\tau_i,v)_{\Omega_i}&=((f-u_h)\phi_i+(\nabla\phi_i)^\perp{\rm rot}u_h,v)_{\Omega_i},\quad v\in\mathcal{P}^{\rm dG}_{p+1}(\Omega_i,\mathbb{V}),
\end{align*}
and the local flux in \eqref{eq:local_curlcurl} is similarly modified as
\begin{align*}
    (\sigma_i,\tau)_{\Omega_i}+({\rm div}\tau,u_i)_{\Omega_i}&=({\rm mskw}(\phi_i{\rm rot}u_h),\tau)_{\Omega_i},\quad\tau\in\mathcal{RT}_{p+1}(\Omega_i,\mathbb{V}),\\
    ({\rm div}\sigma_i,v)_{\Omega_i}&=(f\phi_i+(\nabla\phi_i)^\perp{\rm rot}u_h,v)_{\Omega_i},\quad v\in\mathcal{P}^{\rm dG}_{p+1}(\Omega_i,\mathbb{V}).
\end{align*}

We test corresponding adaptive FEMs  \eqref{eq:Hcurl_elliptic_FEM} and \eqref{eq:curlcurl_FEM} based on the N\'ed\'elec finite element of the second kind of degree $\leq p$. It is shown in Figures \ref{fig:Hcurl} and \ref{fig:curlcurl} that the proposed estimators lead to optimally convergent finite element solutions. The effectiveness ratio remains close to 1 as the polynomial degree grows. Although guaranteed a posteriori error upper bounds are confirm only on convex domains in Corollary \ref{cor:Hcurl} and Theorem \ref{thm:curlcurl}, the equilibrated residual error estimators based auxiliary space still provide rigorous a posteriori error upper bounds even on the L-shaped domain.

\subsection{HHJ Method}\label{subsect:HHJ_experiment} According to approximate optimization \eqref{eq:maximization} on the reference triangle, the constant $\alpha_{\widehat{T}}$ in Section \ref{subsect:HHJ} is computed as follows:  
\begin{align*}
\alpha_{\widehat{T}}&=0.16725~~(p=0),\quad \alpha_{\widehat{T}}=0.04011~~(p=1),\\ 
\alpha_{\widehat{T}}&=0.01973~~(p=2),\quad \alpha_{\widehat{T}}=0.01182~~(p=3).
\end{align*}
On the L-shaped polygon as in Section \ref{subsect:Hcurl_experiment}, we set $f=1$ in the fourth order equation \eqref{eq:mix4thorder} and use a very high order finite element solution $\hat{\sigma}\approx \sigma$ by the HHJ method to approximately compute the stress error $\|\sigma-\sigma_h\|\approx\|\hat{\sigma}-\sigma_h\|$.

\begin{figure}[th]
    \centering
\includegraphics[width=5.5cm]{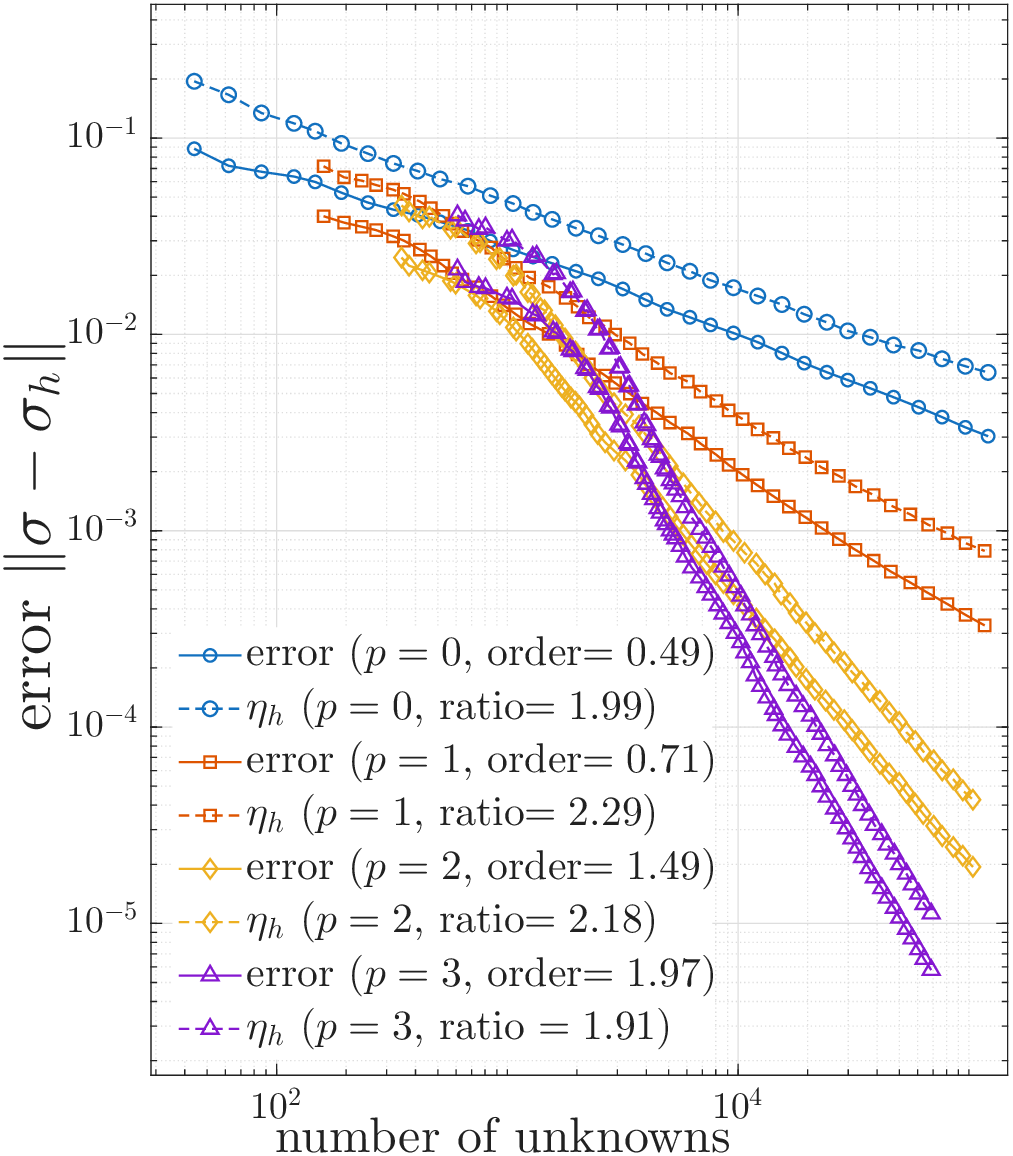} 
\includegraphics[width=5.5cm]{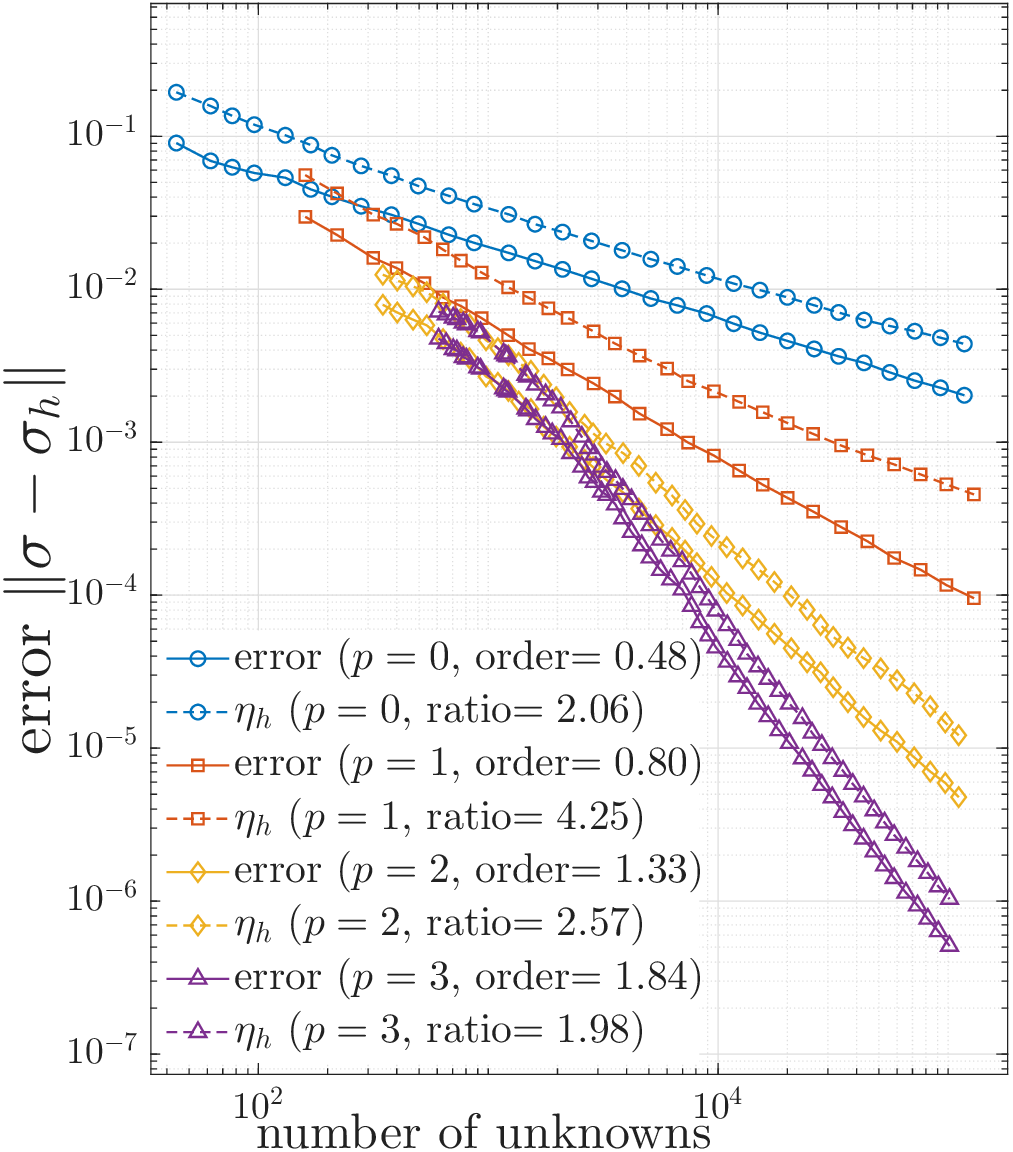} 
    \caption{A posteriori error estimates based on the local $\mathcal{RT}_p\times\mathcal{P}_p^{\rm dG}$ mixed problem for the HHJ method under simply supported (left) and clamped (right) boundary conditions.}
    \label{fig:HHJ_RTp}
\end{figure}

\begin{figure}[th]
    \centering
\includegraphics[width=5.5cm]{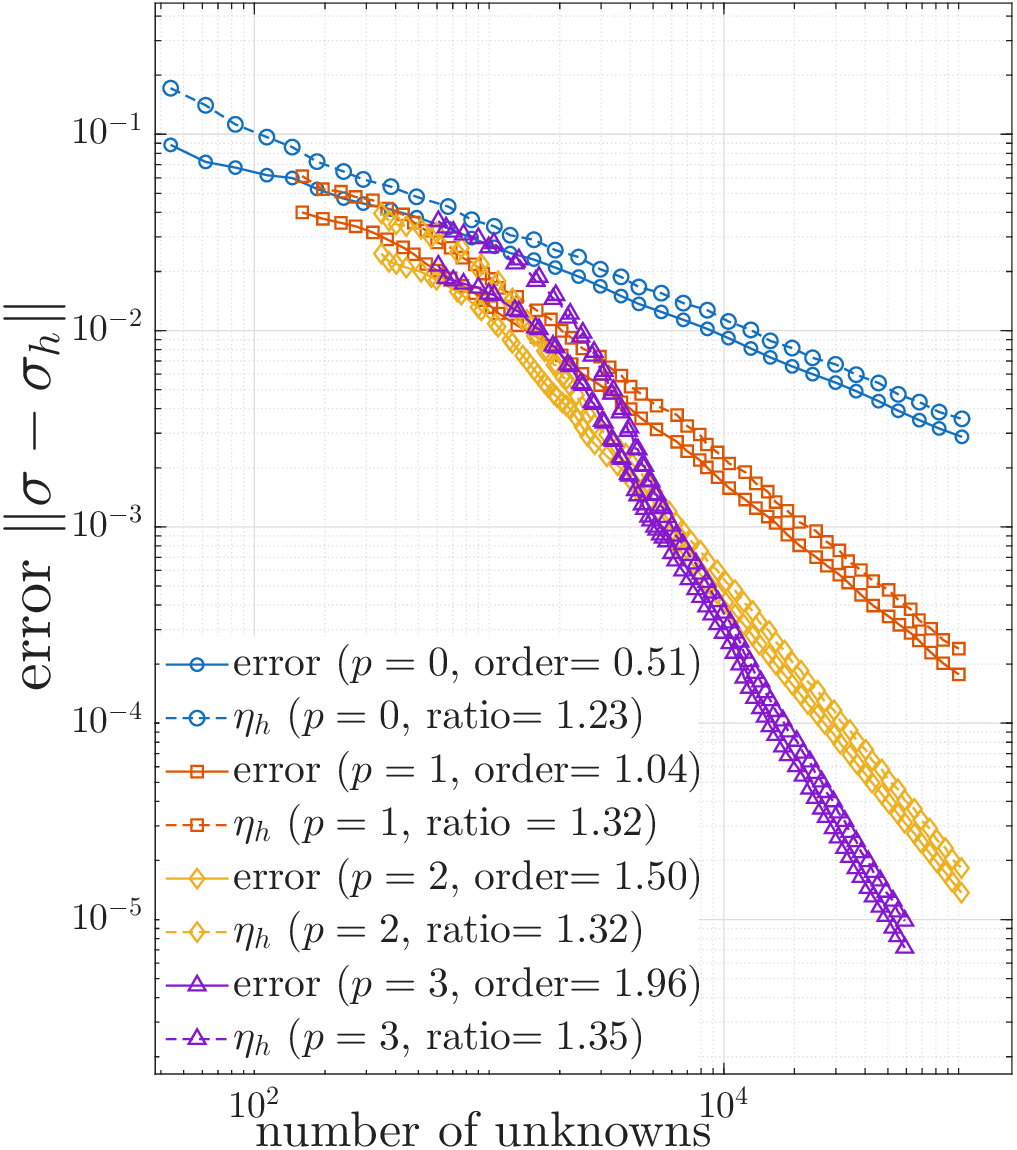} 
\includegraphics[width=5.5cm]{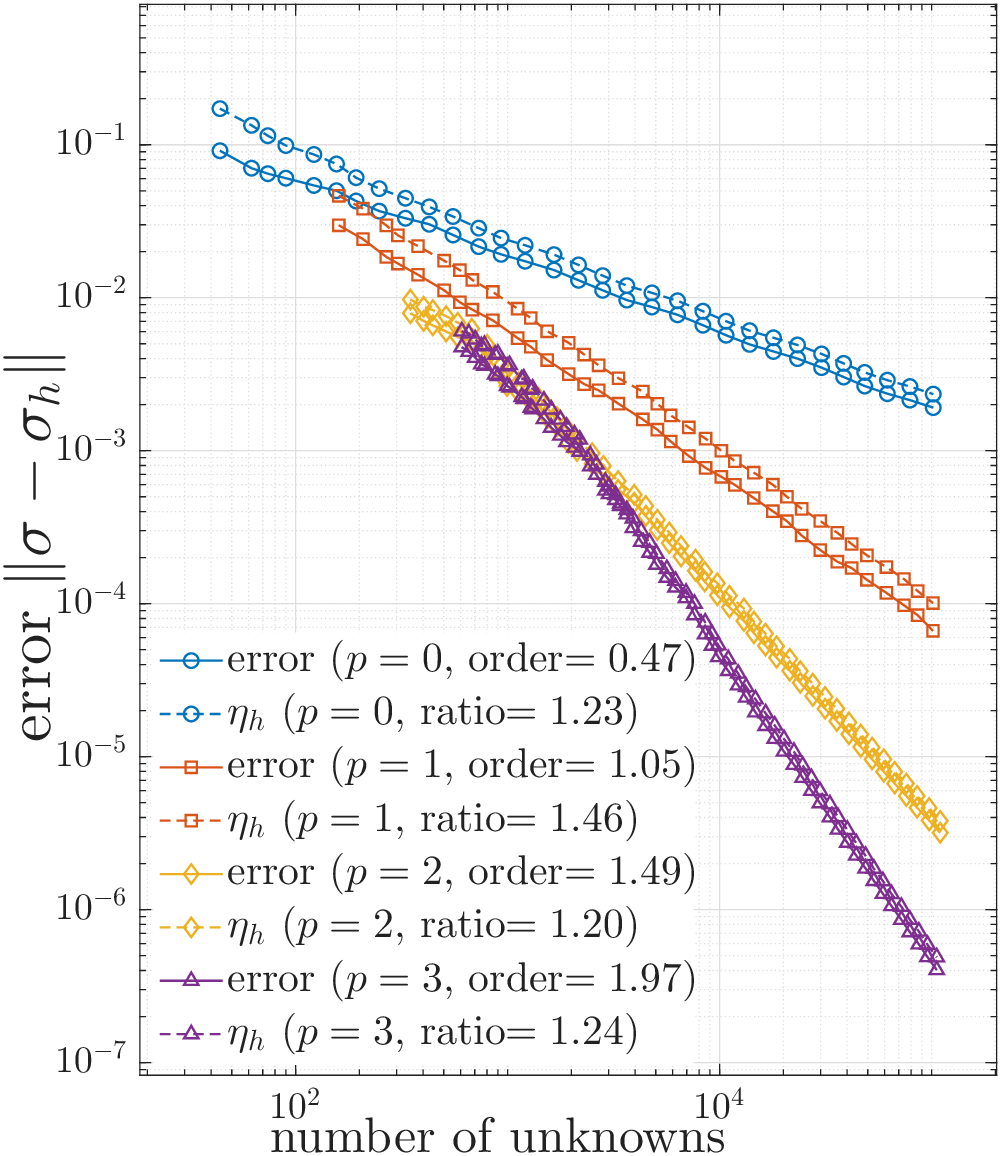} 
    \caption{A posteriori error estimates based on the local $\mathcal{RT}_{p+1}\times\mathcal{P}_{p+1}^{\rm dG}$ mixed problem for the HHJ method under simply supported (left) and clamped (right) boundary conditions.}
    \label{fig:HHJ_RTp1}
\end{figure}

Under the simply supported $u|_{\partial\Omega}=\partial_{nn}u|_{\partial\Omega}=0$ and clamped $u|_{\partial\Omega}=\partial_{n}u|_{\partial\Omega}=0$ boundary conditions, 
we test convergence of the adaptive HHJ method \eqref{def:HHJ} of degree $\leq p$, driven by a posteriori error estimates derived in Theorem \ref{thm:HHJ}. It is shown in Figure \ref{fig:HHJ_RTp} that all estimators lead to finite element solutions with nearly optimal convergence rate. It is also observed that the effectiveness ratio remains to be mild as $p$ grows. However, the effectiveness ratio of the equilibrated residual error estimators for the biharmonic equation is worse than those for $H(\rm curl)$ problems in Section \ref{subsect:Hcurl_experiment}.  

To improve the performance of the error estimator for the HHJ method, we increase the degree of the local problem \eqref{eq:local_HHJ} by one and estimate the error of the HHJ method of degree $\leq p$ based on the local $\mathcal{RT}_{p+1}\times\mathcal{P}_{p+1}^{\rm dG}$ mixed problem as explained in Remark \ref{remark:HHJ_RTp1}. In this scenario, the effectiveness ratio of the modified a posteriori error estimates is relatively close to 1, which is similar to the numerical results for $H(\rm curl)$ problems, see Figure \ref{fig:HHJ_RTp1}.

\section{Concluding remarks}\label{sect:conclusion}
This paper systematically derived equilibrated-type a posteriori error estimates by auxiliary spaces from regular and Helmholtz decompositions. The proposed framework produced novel and user-friendly $p$-robust a posteriori error estimates for controlling the natural norm error of  various $H(\rm curl)$ and $H(\rm div)$ conforming methods. Meanwhile, this framework yields several new $p$-robust $L^2$ norm error estimators for the stress variable of the mixed methods for the Poisson and biharmonic equations.

Applying the auxiliary space approach to mixed FEMs (cf.~\cite{ArnoldAwanouWinther2008,PechsteinSchoberl2011,HuZhang2015,PechsteinSchoberl2018,Li2021M2AN,ChenHuang2022MCOM}) for the elasticity equation yields $H^{-2}$ residuals, while $p$-robust a posteriori error estimation for $H^{-2}$ residuals has not been established in the literature.
Similarly,
natural norm a posteriori error estimates related to higher order complexes are beyond the scope of the current paper. For conforming finite element discretizations, the regular decomposition 
\[
H({\rm divdiv},\Omega,\mathbb{S})={\rm symcurl}H^1(\Omega,\mathbb{V})+H^2(\Omega,\mathbb{S})
\]
of $H({\rm divdiv},\mathbb{S})$ implies equilibration in $H^2(\Omega,\mathbb{S})^*$, a highly complicated subject. Recently, regular decompositions for distributional finite element discretizations was derived in \cite{GopalakrishnanHuSchoberl2025}.
In future work, we will explore  equilibrated residual error estimators for controlling natural norm errors of distributional finite element discretizations.

\section*{Acknowledgments} The author would like to thank Kexin Ding for the help with numerical experiments and Dr.~Kaibo Hu for helpful discussions on distributional finite elements, BGG complexes and regular decomposition.

\bibliographystyle{plain}

\end{document}